\documentclass[11pt,onecolumn]{article}

\usepackage{a4wide}
\usepackage{amsmath,amsthm,epsfig,amssymb,amsbsy}
\usepackage{enumerate}
\usepackage{comment}
\usepackage{algorithm}
\usepackage{algorithmic}
\usepackage{amsfonts}       
\usepackage{dsfont}
\usepackage{enumitem}
\usepackage{amssymb}

\usepackage{threeparttable}
\usepackage{chngcntr}
\usepackage{tikz}
\usepackage{tikz-3dplot}
\usepackage{pgfplots}
\usetikzlibrary{backgrounds}
\usepackage{makecell}
\usepackage{tabularx}

\usepackage[giveninits=true,maxbibnames=9, maxcitenames=2, backend=biber]{biblatex}
\newenvironment{keywords}{\begin{paragraph}{Keywords:}
}
{
\end{paragraph}
}
    
\newenvironment{subclass}{\begin{paragraph}{AMS Subject Classification:}
}
{\end{paragraph}
}

\DeclareMathOperator*{\argmin}{arg\,min}

\DeclareMathOperator*{\argmax}{arg\,max}

\DeclareMathOperator{\gph}{gph}

\DeclareMathOperator{\dom}{dom}

\DeclareMathOperator{\intr}{int}
\DeclareMathOperator{\bdry}{bdry}

\DeclareMathOperator{\con}{con}

\DeclareMathOperator{\ran}{rge}

\newcommand{\bR}{\mathbb{R}}

\newcommand{\bN}{\mathbb{N}}

\newcommand{\exR}{\overline{\mathbb{R}}}

\newcommand{\cC}{\mathcal{C}}

\newcommand{\cN}{\mathcal{N}}

\newcommand{\cI}{\mathcal{I}}
\newcommand{\cJ}{\mathcal{J}}

\newcolumntype{Y}{>{\centering\arraybackslash}X}

\newcommand{\phif}{\nabla_\Phi f}

\makeatletter
\newcommand{\prox}[3][\@nil]{%
  \def\tmp{#1}%
   \ifx\tmp\@nnil
       \operatorname{prox}_{#3}^{#2}
    \else
         \operatorname{prox}_{#3}^{#1 \star #2}
    \fi}

\newcommand{\bprox}[3][\@nil]{%
  \def\tmp{#1}%
   \ifx\tmp\@nnil
       \operatorname{bprox}_{#3}^{#2}
    \else
        \operatorname{bprox}_{#3}^{#1 #2}
    \fi}
\makeatother

\usepackage{hyperref}
\hypersetup{
    colorlinks=true,
    linkcolor=blue,
    filecolor=magenta,
    citecolor =magenta,  
    urlcolor=magenta,
    pdftitle={Generalized FBE}
    }

\usepackage[capitalize]{cleveref}[0.19]

\usepackage{aliascnt}

\crefname{section}{section}{sections}
\crefname{subsection}{subsection}{subsections}
\Crefname{section}{Section}{Sections}
\Crefname{subsection}{Subsection}{Subsections}

\Crefname{figure}{Figure}{Figures}

\crefformat{equation}{\textup{#2(#1)#3}}
\crefrangeformat{equation}{\textup{#3(#1)#4--#5(#2)#6}}
\crefmultiformat{equation}{\textup{#2(#1)#3}}{ and \textup{#2(#1)#3}}
{, \textup{#2(#1)#3}}{, and \textup{#2(#1)#3}}
\crefrangemultiformat{equation}{\textup{#3(#1)#4--#5(#2)#6}}%
{ and \textup{#3(#1)#4--#5(#2)#6}}{, \textup{#3(#1)#4--#5(#2)#6}}{, and \textup{#3(#1)#4--#5(#2)#6}}

\Crefformat{equation}{#2Equation~\textup{(#1)}#3}
\Crefrangeformat{equation}{Equations~\textup{#3(#1)#4--#5(#2)#6}}
\Crefmultiformat{equation}{Equations~\textup{#2(#1)#3}}{ and \textup{#2(#1)#3}}
{, \textup{#2(#1)#3}}{, and \textup{#2(#1)#3}}
\Crefrangemultiformat{equation}{Equations~\textup{#3(#1)#4--#5(#2)#6}}%
{ and \textup{#3(#1)#4--#5(#2)#6}}{, \textup{#3(#1)#4--#5(#2)#6}}{, and \textup{#3(#1)#4--#5(#2)#6}}

\newtheorem{theorem}{Theorem}[section]

\newaliascnt{lemma}{theorem}
\newtheorem{lemma}[lemma]{Lemma}
\aliascntresetthe{lemma}

\newaliascnt{proposition}{theorem}
\newtheorem{proposition}[proposition]{Proposition}
\aliascntresetthe{proposition}

\newaliascnt{corollary}{theorem}
\newtheorem{corollary}[corollary]{Corollary}
\aliascntresetthe{corollary}

\newaliascnt{assumption}{theorem}
\newtheorem{assumption}[assumption]{Assumption}
\aliascntresetthe{assumption}

\newaliascnt{definition}{theorem}
\newtheorem{definition}[definition]{Definition}
\aliascntresetthe{definition}

\newaliascnt{fact}{theorem}
\newtheorem{fact}[fact]{Fact}
\aliascntresetthe{fact}

\newtheoremstyle{boldremark}
    {\dimexpr\topsep/2\relax} 
    {\dimexpr\topsep/2\relax} 
    {}          
    {}          
    {\bfseries} 
    {.}         
    {.5em}      
    {}          

\theoremstyle{boldremark}
\newaliascnt{remark}{theorem}
\newtheorem{remark}[remark]{Remark}
\aliascntresetthe{remark}

\theoremstyle{boldremark}
\newaliascnt{example}{theorem}
\newtheorem{example}[example]{Example}
\aliascntresetthe{example}

\crefname{theorem}{Theorem}{Theorems}
\Crefname{theorem}{Theorem}{Theorems}
\crefname{lemma}{Lemma}{Lemmas}
\Crefname{lemma}{Lemma}{Lemmas}
\crefname{proposition}{Proposition}{Propositions}
\Crefname{proposition}{Proposition}{Propositions}
\crefname{corollary}{Corollary}{Corollaries}
\Crefname{corollary}{Corollary}{Corollaries}
\crefname{definition}{Definition}{Definitions}
\Crefname{definition}{Definition}{Definitions}
\crefname{fact}{Fact}{Facts}
\Crefname{fact}{Fact}{Facts}
\crefname{remark}{Remark}{Remarks}
\Crefname{remark}{Remark}{Remarks}
\crefname{example}{Example}{Examples}
\Crefname{example}{Example}{Examples}

\newlist{lemenum}{enumerate}{1} 
\setlist[lemenum]{label=(\roman*), ref=\thelemma(\roman*), font=\rm}
\crefalias{lemenumi}{lemma}

\newlist{propenum}{enumerate}{1} 
\setlist[propenum]{label=(\roman*), ref=\theproposition(\roman*), font=\rm}
\crefalias{propenumi}{proposition}

\newlist{thmenum}{enumerate}{1} 
\setlist[thmenum]{label=(\roman*), ref=\thetheorem(\roman*), font=\rm}
\crefalias{thmenumi}{theorem}
\counterwithin*{thmenumi}{theorem}

\newlist{defenum}{enumerate}{1} 
\setlist[defenum]{label=(\roman*), ref=\thedefinition(\roman*), font=\rm}
\crefalias{defenumi}{definition}

\newlist{corenum}{enumerate}{1} 
\setlist[corenum]{label=(\roman*), ref=\thecorollary(\roman*), font=\rm} 
\crefalias{corenumi}{corollary}

\newlist{examplenum}{enumerate}{1} 
\setlist[examplenum]{label=(\roman*), ref=\theexample(\roman*), font=\rm} 
\crefalias{examplenumi}{example}

\usepackage{xcolor}
\newcommand{\change}[2]{#2}

\newcommand\blfootnote[1]{%
  \begingroup
  \renewcommand\thefootnote{}\footnote{#1}%
  \addtocounter{footnote}{-1}%
  \endgroup
}

\title{On the Regularity of Generalized Conjugate Functions}
\author{Konstantinos Oikonomidis\thanks{KU Leuven,
		Department of Electrical Engineering (ESAT-STADIUS),
		Kasteelpark Arenberg 10, 3001 Leuven, Belgium~
		{\tt%
            \href{mailto:konstantinos.oikonomidis@esat.kuleuven.be}{\{konstantinos.oikonomidis,}%
			\href{mailto:panos.patrinos@esat.kuleuven.be}{panos.patrinos\}}%
			\href{mailto:konstantinos.oikonomidis@esat.kuleuven.be,panos.patrinos@esat.kuleuven.be}{@esat.kuleuven.be}%
		}
	} \and Emanuel Laude\footnotemark[2]\thanks{Proxima Fusion GmbH,
		Fl\"o{\ss}ergasse 2,
		81369 Munich, Germany~
		{\tt
        \href{mailto:elaude@proximafusion.com}{elaude@proximafusion.com}%
		}
	} \and Panagiotis Patrinos\footnotemark[1]}

\begin{document}

\maketitle
\blfootnote{Work supported by: the Research Foundation Flanders (FWO) research projects G081222N, G033822N; Research Council KUL grant C14/24/103.}

\begin{abstract}
We investigate regularity properties of generalized conjugate functions induced by a general coupling function and the associated generalized proximal mapping. Our main results provide verifiable conditions ensuring local single-valuedness, continuity, Lipschitz continuity, and differentiability of the generalized proximal mapping, and transfer these properties to generalized conjugates providing explicit derivative formulas. These results are based on a nonsmooth implicit function theorem for generalized equations, relying on graphical localizations and second-order variational tools. Beyond first-order regularity, we also derive conditions under which generalized conjugates are strictly twice differentiable.
\end{abstract}
\begin{keywords}
nonsmooth analysis $\cdot$ nonconvex optimization $\cdot$ generalized convexity
\end{keywords}
\begin{subclass}
65K05 $\cdot$ 49J52 $\cdot$ 90C30
\end{subclass}
\tableofcontents

\section{Introduction}
\subsection{Motivation and related work}
Envelope functions are important tools in optimization and variational analysis due to their regularization and smoothing properties. The celebrated Moreau envelope \cite[Definition 1.22]{RoWe98} is known to, given a convex but possibly discontinuous or even extended real-valued function, return a real-valued and smooth approximation whose stationary points are exactly the minimizers of the original function. In this sense, the envelope provides a smooth surrogate that retains the essential features of the underlying problem.

The situation differs greatly in the nonconvex setting where except for the class of hypoconvex functions, those that are convex up to adding a quadratic of positive curvature, the continuity and smoothness properties of the envelope cannot be guaranteed in general. Classical results overcome this implication by considering so-called prox-regular functions \cite[Definition 13.27]{RoWe98} that in a local sense behave similarly to (hypo)convex functions. The notion of prox-regularity was itself introduced in \cite{poliquin1996prox} and has been shown to hold for large function classes such as strongly amenable functions \cite{poliquin1996prox}, becoming a standard assumption in the related literature.

Nevertheless, regularization techniques extend far beyond the squared Euclidean norm. In fact, general coupling functions of the form $\Phi(x,y)$ were used for envelope functions by Moreau in his 1970 seminal paper \cite{moreau1970inf}. Maybe the most well-known extensions of the Euclidean envelopes are the so-called left and right Bregman--Moreau envelopes that utilize the Bregman divergence $D_h$ generated by a (Legendre) convex kernel function $h$. This type of regularization has been studied extensively, with \cite{bauschke2018regularizing} focusing on the convex setting and \cite{chen2012moreau} on the relatively hypoconvex setting as an extension of standard hypoconvexity. In \cite{laude2020bregman} the notion of prox-regularity was extended to the Bregman setting and local continuous differentiability of the envelope as well as Lipschitz continuity of the proximal mapping were established, while in \cite{ahookhosh2021bregman} the (strict) twice differentiability of the envelope was studied. A similar regularization technique utilizes Csisz\'ar $f$-divergences instead of Bregman ones, with classical works \cite{teboulle1992entropic,iusem1994entropy,iusem1995convergence,teboulle1997convergence} focusing on the convex setting and the nonconvex setting remaining largely unexplored.

Another interesting case is the so-called anisotropic Moreau envelope \cite{combettes2013moreau,laude2019optimization,laude2021lower,laude2023dualities,laude2025anisotropic}, which in many important instances boils down to an infimal convolution (or epi-addition) \cite[Chapter 1]{RoWe98} between the function $f$ and a (Legendre) convex regularizer $\phi$. The regularity properties of such functions have been studied extensively in the convex setting \cite{bauschke2017correction}, where an interesting extension of Moreau's decomposition theorem links the corresponding proximal mapping with the aforementioned Bregman proximal mapping \cite[Proposition 3.9]{laude2025anisotropic}. In the nonconvex setting, local continuous differentiability of the envelope was established in \cite{laude2019optimization,laude2021lower} under suitable extensions of prox-regularity and prox-boundedness. Moreover, in recent works \cite{laude2023dualities,laude2025anisotropic,oikonomidis2025nonlinearly,oikonomidis2025nonlinearlymom} anisotropic envelopes were connected to a generalized smoothness notion that is less restrictive than Lipschitz smoothness and is tailored to popular neural network optimization algorithms. 

The simple fact that the aforementioned properties depend on the choice of the coupling $\Phi$ raises the following question:
\begin{quote}
    \emph{Which envelope regularity properties persist under genuinely abstract couplings, and under what minimal assumptions?}
\end{quote}

In this paper we answer this question through the lens of $\Phi$-convexity. Building on \cite{oikonomidis2025forwardbackward}, which introduced a unifying framework for continuous optimization methods based on $\Phi$-convexity theory, we study the continuity and (twice) continuous differentiability of $\Phi$-conjugates (generalized envelopes) and $\Phi$-subdifferentials (generalized forward steps). Our work not only unifies the study of generalized envelopes and leads to new theoretical results but also has practical implications, as we demonstrate next.

\textbf{Analysis of proximal and subgradient methods.} On the one hand, classical approaches analyze the proximal point algorithm as an instance of gradient descent on the (smooth) Moreau envelope, simplifying and streamlining the analysis, see for example \cite{nedelcu2014computational}. On the other, \cite{davis2019stochastic} introduced a line of works where the Moreau envelope plays the role of the smooth surrogate for (stochastic) subgradient methods beyond convexity, closing a gap in the literature and highlighting its usefulness. Our results on the differentiability of the $\Phi$-conjugates can thus potentially lead to new (proximal) subgradient algorithms and tighter convergence guarantees for existing ones.

\textbf{Forward-backward envelopes and fast schemes.} Envelope functions have successfully been used in designing fast algorithms for additive composite minimization problems. In this context, the forward-backward envelope (FBE) \cite{stella2017forward,stella2017simple,themelis2018forward} plays the role of the continuous, and in many cases even smooth, merit function that is used to globalize some quasi-Newton procedure. Even in the simple case of minimizing the sum of a Lipschitz smooth function and a nonsmooth regularizer, the FBE inherits the regularity properties of the Moreau envelope and the gradient step of the algorithm \cite{themelis2018forward}. Generalizing this approach to the $\Phi$-convexity setup of \cite{oikonomidis2025forwardbackward}, the regularity properties of generalized conjugates and gradient steps can be utilized to create fast quasi-Newton algorithms for very large classes of functions.

\textbf{Optimal transport.} Generalized convexity (known as $c$-concavity in that context) plays an instrumental role in optimal transport (OT) theory. Classical results on Kantorovich duality relate the solution of the OT problem with that of a maximization over $\Phi$-convex functions \cite[Theorem 5.10]{Vil08}. Under suitable assumptions on the given cost ($\Phi$ in our notation) and measures, the joint probability measure is concentrated in the graphs of $\Phi$-subdifferentials \cite[Theorem 5.10]{Vil08}. Moreover, seminal works \cite{gangbo1996geometry,ma2005regularity} study the regularity properties of $\Phi$-convex functions and transformations as they arise naturally from a geometric perspective. It is thus straightforward that the regularity properties of $\Phi$-conjugates and $\Phi$-subdifferentials directly influence those of the corresponding transport maps.

\subsection{Outline and summary of contributions}
\Cref{sec:prelim} introduces concepts and notions from nonsmooth analysis and \Cref{sec:gen_convexity} introduces notions and results from generalized convexity theory. Various examples of coupling functions $\Phi$ along with the $\Phi$-conjugates and $\Phi$-subdifferentials are presented.

In \Cref{sec:phi_subg} we provide sufficient conditions for the existence, continuity and differentiability of $\Phi$-subgradients. More precisely, we study $\varepsilon$-$\Phi$-subgradients as approximate minimization problems using Ekeland's variational principle in \cref{prop:phi_subg_existence}, which is the main result of this section.

In \Cref{sec:diff_phi_conj} we study the continuity and differentiability properties of the $\Phi$-conjugates and the generalized proximal mapping. In \cref{subsec:cont_diff_conj}, we introduce a generalized form of prox-regularity which we then utilize to prove local continuous differentiability of $\Phi$-conjugates under various equivalent conditions in \cref{thm:equivalence_phi_prox_reg}. We then proceed in \cref{subsec:lipschitz_diff_conj} to obtain the local Lipschitz continuity of the generalized proximal mappings as well as the strict twice differentiability of the $\Phi$-conjugates using the nonsmooth version of the implicit function theorem in \cref{thm:single_valued_prox}. To the best of our knowledge these results are new for general coupling functions $\Phi$.

\section{Notation and preliminaries}
\label{sec:prelim}
We denote by $\langle \cdot, \cdot\rangle$ the standard Euclidean inner product on $\bR^n$ and by $\|\cdot\|:=\sqrt{\langle \cdot, \cdot \rangle}$ the standard Euclidean norm on $\bR^n$ as well as the spectral norm for matrices. The extended real line is denoted by $\exR = \bR \cup \{-\infty, +\infty\}$. The open ball of radius $r > 0$ centered at $x \in \bR^n$ is denoted as $B(x, r)$, while the closed ball as $\overline{B}(x, r)$. For a point $x \in \bR^n$ we denote by $\mathcal{N}(x)$ the collection of all neighborhoods of $x$. The \emph{effective domain} of an extended real-valued function $f : \bR^n \to \exR$ is denoted by $\dom f:=\{x\in\bR^n : f(x)<\infty\}$, and we say that $f$ is \emph{proper} if $\dom f\neq\emptyset$ and $f(x) > -\infty$ for all $x \in \bR^n$; \emph{lower semicontinuous (lsc)} if $f(\bar x)\leq\liminf_{x\to\bar x}f(x)$ for all $\bar x\in\bR^n$. Following \cite[Definition 1.33]{RoWe98}, we say that $f:\bR^n \to \exR$ is \emph{locally lsc} at $\bar x$, a point where $f(\bar x)$ is finite, if there is an $\varepsilon > 0$ such that all sets of the form $\{x \in \overline B(\bar x, \varepsilon) \mid f(x) \leq \alpha\}$ with $\alpha \leq f(\bar x) + \varepsilon$ are closed. We define by $\Gamma_0(\bR^n)$ the class of all proper, lsc convex functions $f:\bR^n \to \exR$ and with $\mathcal{C}^k(Y)$ the ones which are $k$ times continuously differentiable on an open set $Y \subseteq \bR^n$. For a proper function $f :\bR^n \to \exR$ and $\lambda \geq 0$ we define the epi-scaling $(\lambda \star f)(x) = \lambda f(\lambda^{-1} x)$ for $\lambda > 0$ and $(\lambda \star f)(x)=\delta_{\{0\}}(x)$ otherwise. We say that a proper, lsc and convex function $f:\bR^n \to \exR$ is \emph{essentially smooth} if $f$ is differentiable on $\intr (\dom f)$ such that $\|\nabla f(x^\nu)\|\to \infty$ whenever $\intr (\dom f) \ni x^\nu \to x \in \bdry (\dom f)$, \emph{essentially strictly convex} if $f$ is strictly convex on every convex subset of $\partial f$, and \emph{Legendre}, if $f$ is both essentially smooth and essentially strictly convex. Following \cite[p.~35]{dontchev2009implicit}, a function $F: \bR^n \to \bR^m$ is said to be strictly differentiable at a point $\bar x$ if there exists a linear mapping $A:\bR^n \to \bR^m$ such that for every $\varepsilon > 0$ there exists a $V \in \mathcal{N}(\bar x)$ with $\|F(x)-F(x')-A(x-x')\| \leq \varepsilon \|x-x'\|$,  for all $x, x' \in V$. Note that if $F \in \cC^1(V)$ for some $V \in \cN(\bar x)$ then it is \emph{strictly differentiable} at $\bar x$ but the reverse implication does not hold in general.
Let $F: \bR^n \rightrightarrows \bR^m$ be a set-valued mapping. We define its \emph{domain} $\dom F := \{x \in \bR^n \mid F(x) \neq \emptyset\}$, its \emph{range} $\ran F := \{u \in \bR^m \mid u \in F(x) \text{ for some } x \in \bR^n\}$ and its \emph{graph} $\gph F := \{(x,u) \in \bR^n \times \bR^m \mid u \in F(x)\}$. The \emph{inverse} mapping $F^{-1}:\bR^m \rightrightarrows \bR^n$ is defined by $F^{-1}(u) = \{x \in \bR^n \mid u \in F(x)\}$. Clearly, $(F^{-1})^{-1} = F$. Following \cite[p. 264]{RoWe98} we denote the set of points that minimize a function $f$ to within $\varepsilon$ by $\varepsilon$-$\argmin f:= \{x \mid f(x) \leq \inf f + \varepsilon\}$.

\section{Generalized convexity}
\label{sec:gen_convexity}
In this section we recapitulate the existing notions of $\Phi$-convexity and $\Phi$-conjugacy \cite{moreau1970inf} which are used heavily as tools in the remainder of the manuscript. Originating as a generalization of convexity to nonlinear spaces, these notions have since appeared in the context of eliminating duality gaps in nonconvex and nonsmooth optimization or optimal transport theory; see, e.g., \cite{rockafellar1974augmented,penot1990strongly,Vil08,bauermeister2021lifting,figalli2011multidimensional}. Recently, $\Phi$-convexity theory has also been extensively utilized in analyzing and designing optimization methods \cite{laude2025anisotropic,leger2023gradient,oikonomidis2025forwardbackward,bednarczuk2025proximal,oikonomidis2025nonlinearly,bednarczuk2026primal}. 

\begin{definition}[$\Phi$-convex and $\Phi$-concave functions] \label{def:phi_cvx}
Let $X$ and $Y$ be nonempty sets and $\Phi: X \times Y \to \bR$ a real-valued coupling. Let $f : X \to \exR$ and $g: Y \to \exR$.
We say that $f$ is $\Phi$-convex on $X$ if there is an index set $\cI$ and parameters $(y_i, \beta_i) \in Y \times \exR$ for $i \in \cI$ such that
\begin{align}
    f(x) = \sup_{i \in \cI} \Phi(x, y_i) - \beta_i\quad \forall x \in X.
\end{align}
If $\cI =\emptyset$ we say that $f \equiv -\infty$ is $\Phi$-convex on $X$ by definition.
We say that $g$ is $\Phi$-convex on $Y$ if there is an index set $\cJ$ and parameters $(x_j, \alpha_j) \in X \times \exR$ for $j \in \cJ$ such that
\begin{align}
g(y) = \sup_{j \in \cJ} \Phi(x_j, y) - \alpha_j \quad \forall y \in Y.
\end{align}
If $\cJ =\emptyset$ we say that $g \equiv -\infty$ is $\Phi$-convex on $Y$ by definition.
We say that $f$ or $g$ is $\Phi$-concave if $-f$ or $-g$ is $\change{\Phi}{(-\Phi)}$-convex.
\end{definition}
If $X=Y$ are indistinguishable and $\Phi$ is not symmetric we shall refer to $f$ as left $\Phi$-convex and $g$ as right $\Phi$-convex. Note that in the bibliography $\Phi$-convex functions appear also as $\Phi$-envelopes. In light of \cite[Theorem 8.13]{RoWe98}, any proper, lsc and convex function $h:\bR^n \rightarrow \exR$ is the pointwise supremum over a family of affine functions:
\begin{equation}
    h(x) = \sup_{i \in \mathcal{I}}\langle x, y_i\rangle - \beta_i.
\end{equation}
Therefore, using the coupling $\Phi = \langle \cdot,\cdot \rangle$ and identifying $X, Y$ with $\bR^n$, one recovers from $\Phi$-convex functions the class of proper, lsc and convex functions and $(-\Phi)$-concavity coincides with the classical notion of concavity. On the other hand, if $\Phi(\cdot, y)$ is lsc and convex for any $y \in \bR^n$, a $\Phi$-convex function is lsc and convex.

Central to our analysis are the notions of $\Phi$-conjugate and $\Phi$-biconjugate functions:
\begin{definition}[$\Phi$-conjugate functions]
Let $X$ and $Y$ be nonempty sets and $\Phi: X \times Y \to \bR$ a real-valued coupling. Let $f: X \to \exR$. Then we define
\begin{align}
f^\Phi(y)=\sup_{x \in X} \Phi(x, y) - f(x),
\end{align}
as the $\Phi$-conjugate of $f$ on $Y$ and 
\begin{align}
f^{\Phi\Phi}(x)=\sup_{y \in Y} \Phi(x, y) - f^\Phi(y),
\end{align}
as the $\Phi$-biconjugate back on $X$.
The definitions of $g^\Phi$ and $g^{\Phi\Phi}$ for $g:Y \to \exR$ are parallel.
\end{definition}
Equivalently to the $\Phi$-convexity definition, if $X=Y$ are indistinguishable and $\Phi$ is not symmetric we shall refer to $f^\Phi$ as the left $\Phi$-conjugate and $g^\Phi$ as the right $\Phi$-conjugate.
From the definition it is clear that $f^\Phi$ is $\Phi$-convex on $Y$ and $f^{\Phi\Phi}$ is $\Phi$-convex back on $X$. Notice that when $X = Y = \bR^n$, by considering the standard inner product $\langle \cdot, \cdot \rangle$ as a coupling, we obtain the classical convex conjugate definition \cite[Equation 11(1)]{RoWe98}. We also adopt from \cite{balder1977extension} the definition of the $\Phi$-subgradient, which is a generalization of the classical notion of subgradient:

\begin{definition}[$\varepsilon$-$\Phi$-subgradients] \label{def:phi_subg}
Let $X$ and $Y$ be nonempty sets and $\Phi: X \times Y \to \bR$ a real-valued coupling. Let $f: X \to \exR$ and $\varepsilon \geq 0$. Then we say that $y$ is a $\varepsilon$-$\Phi$-subgradient of $f$ at $\bar x$ if
\begin{equation} \label{eq:phi_subgradient_ineq}
    f(x) \geq f(\bar x) + \Phi(x, y) - \Phi(\bar x, y) - \varepsilon,
\end{equation}
for all $x \in X$.

We denote by $\partial_\Phi^\varepsilon f(\bar x)$ the set of all $\varepsilon$-$\Phi$-subgradients of $f$ at a point $\bar{x} \in X$.

If $\varepsilon =0$ we omit the superscript $\varepsilon$ and refer to $\partial_\Phi f(\bar x)$ as the $\Phi$-subdifferential of $f$ at $\bar x$ and $y \in \partial_\Phi f(\bar x)$ as a $\Phi$-subgradient of $f$ at $\bar x$.

If $\partial_\Phi f(\bar x) = \{y\}$ is a singleton we adopt the notation $\nabla_\Phi f(\bar x) = y$.
\end{definition}
When $X = Y = \bR^n$ and $\Phi(x,y)=\langle x,y \rangle$, the above definition coincides with the classical notion of $\varepsilon$-subgradients of convex analysis \cite[Definition 5.8]{mordukhovich2022convex}.

The following statement is a generalization of the Fenchel--Young inequality. It is standard in the literature; see \cite{balder1977extension,dolecki1978convexity} and references therein. 
\begin{proposition}[$\Phi$-Fenchel--Young inequality] \label{thm:phi_envelope}
Let $X$ and $Y$ be nonempty sets, $\Phi: X \times Y \to \bR$ a real-valued coupling and $f:X\to \exR$. Then we have
\begin{propenum}
\item \label{thm:phi_envelope:phi_convex} $f^\Phi$ is $\Phi$-convex on $Y$ and $f^{\Phi\Phi}$ is $\Phi$-convex on X;
\item \label{thm:phi_envelope:fenchel_young} $f(x) + f^\Phi(y) \geq \Phi(x, y) \quad \forall x \in X, \quad \forall y \in Y$;
\item \label{thm:phi_envelope:phi_biconj} $f(x) \geq f^{\Phi\Phi}(x) \quad \forall x \in X$.
\end{propenum}
In addition, $f^{\Phi\Phi}$ is the pointwise largest $\Phi$-convex function below $f$. In particular, this means that $f$ is $\Phi$-convex on $X$ if and only if $f(x) = f^{\Phi\Phi}(x)$ for all $x \in X$.

The statements for $g:Y \to \exR$ are parallel.
\end{proposition}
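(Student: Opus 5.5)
The plan is to argue directly from the definitions, since every assertion is a statement about suprema of families of functions of the form $\Phi(\cdot,y)-\beta$. One convention must be fixed at the outset: as in convex analysis, $\infty-\infty=\infty$ inside the suprema, so that points $x$ with $f(x)=+\infty$ contribute $-\infty$. With that convention, $f^\Phi(y)=\sup_{x\in X}\Phi(x,y)-f(x)$ is already of the form required in \cref{def:phi_cvx}. Take the index set $\cI=X$ and parameters $(x_j,\alpha_j)=(x,f(x))\in X\times\exR$. This shows $f^\Phi$ is $\Phi$-convex on $Y$. Likewise $f^{\Phi\Phi}$ is $\Phi$-convex on $X$, using the index set $Y$ and parameters $(y,f^\Phi(y))$. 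This gives (i).

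For (ii), fix $x,y$. If $f(x)=+\infty$ or $f^\Phi(y)=+\infty$ there is nothing to prove. Otherwise $f^\Phi(y)\ge \Phi(x,y)-f(x)$ by taking $x$ in the supremum, and rearranging gives the claim; the case $f(x)=-\infty$ forces $f^\Phi\equiv+\infty$ and is covered above. For (iii), rewrite (ii) as $f(x)\ge \Phi(x,y)-f^\Phi(y)$ for all $y$, again with care for infinite values, and take the supremum over $y$.

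For the extremality statement, let $g$ be $\Phi$-convex with $g\le f$, say $g=\sup_{i\in\cI}\Phi(\cdot,y_i)-\beta_i$. For each $i$ we have $\Phi(x,y_i)-\beta_i\le f(x)$ for all $x$. This gives $\beta_i\ge \sup_x \Phi(x,y_i)-f(x)=f^\Phi(y_i)$, hence $\Phi(x,y_i)-\beta_i\le \Phi(x,y_i)-f^\Phi(y_i)\le f^{\Phi\Phi}(x)$. Taking the supremum over $i$ yields $g\le f^{\Phi\Phi}$. The case $\cI=\emptyset$ is trivial. Since $f^{\Phi\Phi}$ is itself $\Phi$-convex by (i) and lies below $f$ by (iii), it is the largest such function.

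The characterization follows at once. If $f$ is $\Phi$-convex, apply extremality with $g=f$ to get $f\le f^{\Phi\Phi}\le f$. Conversely, if $f=f^{\Phi\Phi}$, then $f$ is $\Phi$-convex by (i). The statements for $g:Y\to\exR$ follow by exchanging the roles of $X$ and $Y$.

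The main obstacle I expect is the bookkeeping of extended-real arithmetic when $\beta_i$, $f$ or $f^\Phi$ take the values $\pm\infty$. In particular, the step $\beta_i\ge f^\Phi(y_i)$ needs checking when $\beta_i=-\infty$. In that case $g\equiv+\infty$, which forces $f\equiv+\infty$, and then $f^\Phi\equiv-\infty$ and $f^{\Phi\Phi}\equiv+\infty$, so the conclusion still holds. I would handle this, and the case where $f$ takes the value $-\infty$, as separate short cases.
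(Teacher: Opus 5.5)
Your proof is correct. The paper gives no argument of its own here: it defers to \cite{balder1977extension,dolecki1978convexity}. What you have written is the standard self-contained verification those references rely on. You read off $\Phi$-convexity of $f^\Phi$ and $f^{\Phi\Phi}$ from their defining suprema, with index sets $X$ and $Y$. You obtain (ii) and (iii) by testing a supremum at a single point. You get extremality by showing $\beta_i\ge f^\Phi(y_i)$ for every minorant $\Phi(\cdot,y_i)-\beta_i\le f$. The citation buys brevity. Your route buys an explicit check of the extended-real cases, which genuinely arise because the definition allows $\beta_i\in\exR$ and $f$ may take the value $-\infty$.

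Two small refinements. First, because $\Phi$ is real-valued, the expressions $\Phi(x,y)-f(x)$, $\Phi(x,y)-f^\Phi(y)$ and $\Phi(x,y_i)-\beta_i$ are always well defined, so no convention is needed inside the suprema. The rule $\infty-\infty=\infty$ is needed only for the sum $f(x)+f^\Phi(y)$ in (ii). That sum has the form $-\infty+\infty$ when $f$ takes the value $-\infty$, and $+\infty-\infty$ when $f\equiv+\infty$. Second, the implication $\Phi(x,y_i)-\beta_i\le f(x)\Rightarrow\Phi(x,y_i)-f(x)\le\beta_i$ holds for all $\beta_i,f(x)\in\exR$. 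Your separate treatment of $\beta_i=-\infty$ is therefore correct but not actually needed.
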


The following proposition relates $\Phi$-subgradients with minimization problems and is of major importance for our analysis. It is standard in the related literature; see \cite[Proposition 3.5]{dolecki1978convexity}.
\begin{proposition}[$\Phi$-subgradient equivalences] \label{thm:phi_subgradients}
    Let $X$ and $Y$ be nonempty sets, $\Phi: X \times Y \to \bR$ a real-valued coupling and  $f: X \to \exR$. Then for any $\bar x \in X$ and $\bar y \in Y$ the following statements are equivalent:
    \begin{propenum}
\item $\bar y \in \partial_\Phi f(\bar x)$; \label{thm:phi_subgradients:y_subg}
\item $ f(\bar x) + f^\Phi(\bar y) = \Phi(\bar x, \bar y)$;
\item $\bar x \in \argmin_{x \in X} f(x) - \Phi(x, \bar y)$; \label{thm:phi_subgradients:x_argmin}
\end{propenum}
where any of the above equivalent statements implies that $f^{\Phi\Phi}(\bar x)= f(\bar x)$ and $\bar x \in \partial_\Phi f^\Phi(\bar y)$.
In particular, if $f$ is $\Phi$-convex $\bar y \in \partial_\Phi f(\bar x) \Leftrightarrow \bar x \in \partial_\Phi f^\Phi(\bar y)$.
\end{proposition}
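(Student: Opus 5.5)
The plan is to prove the cycle (i)$\Rightarrow$(iii)$\Rightarrow$(ii)$\Rightarrow$(i) directly from \cref{def:phi_subg}, the definition of $f^\Phi$, and the $\Phi$-Fenchel--Young inequality \cref{thm:phi_envelope:fenchel_young}. We then deduce the additional claims.

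\textbf{The equivalences.} For (i)$\Rightarrow$(iii), rearrange \eqref{eq:phi_subgradient_ineq} with $\varepsilon=0$ and $y=\bar y$ to get $f(x)-\Phi(x,\bar y)\ge f(\bar x)-\Phi(\bar x,\bar y)$ for all $x\in X$. This says exactly that $\bar x$ minimizes $f-\Phi(\cdot,\bar y)$. For (iii)$\Rightarrow$(ii), observe that
\[
f^\Phi(\bar y)=\sup_x \Phi(x,\bar y)-f(x)=-\inf_x \bigl(f(x)-\Phi(x,\bar y)\bigr).
\]
If the infimum is attained at $\bar x$, this gives $f^\Phi(\bar y)=\Phi(\bar x,\bar y)-f(\bar x)$. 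For (ii)$\Rightarrow$(i), combine the identity $f(\bar x)=\Phi(\bar x,\bar y)-f^\Phi(\bar y)$ with Fenchel--Young $f(x)\ge \Phi(x,\bar y)-f^\Phi(\bar y)$; subtracting yields \eqref{eq:phi_subgradient_ineq}.

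\textbf{Extended-value arithmetic.} This is the main point needing care, since $\Phi$ is real-valued but $f$ is not. If $f(\bar x)=+\infty$, then (i) is read as forcing $f\equiv+\infty$. In that case $f^\Phi\equiv-\infty$, and we use the convention $\infty-\infty=\infty$ consistently, or simply restrict to $f(\bar x)$ finite, which is the meaningful case. If $f(\bar x)=-\infty$, all three statements hold trivially, with $f^\Phi(\bar y)=+\infty$. I expect the write-up to handle this by a short remark rather than a full case analysis.

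\textbf{Additional claims.} Assuming (ii), \cref{thm:phi_envelope:phi_biconj} gives $f^{\Phi\Phi}(\bar x)\le f(\bar x)$. Conversely, by definition,
\[
f^{\Phi\Phi}(\bar x)\ge \Phi(\bar x,\bar y)-f^\Phi(\bar y)=f(\bar x),
\]
so equality holds. To show $\bar x\in\partial_\Phi f^\Phi(\bar y)$, apply the parallel version of the equivalence (ii)$\Leftrightarrow$(i) to $g=f^\Phi$ on $Y$. This requires $f^\Phi(\bar y)+f^{\Phi\Phi}(\bar x)=\Phi(\bar x,\bar y)$, which follows from (ii) together with $f^{\Phi\Phi}(\bar x)=f(\bar x)$.

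\textbf{The $\Phi$-convex case.} If $f$ is $\Phi$-convex, then $f=f^{\Phi\Phi}$ by \cref{thm:phi_envelope}. Suppose $\bar x\in\partial_\Phi f^\Phi(\bar y)$. Applying the just-proved implication to $g=f^\Phi$ gives $\bar y\in\partial_\Phi g^\Phi(\bar x)=\partial_\Phi f^{\Phi\Phi}(\bar x)=\partial_\Phi f(\bar x)$. This gives the reverse implication and completes the equivalence.
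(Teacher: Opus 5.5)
Your cycle (i)$\Rightarrow$(iii)$\Rightarrow$(ii)$\Rightarrow$(i), the two-sided bound giving $f^{\Phi\Phi}(\bar x)=f(\bar x)$, and the use of the parallel statement for $g=f^\Phi$ (with $g^\Phi=f^{\Phi\Phi}$) are correct whenever $f(\bar x)\in\bR$. This is the standard argument; the paper gives no proof of its own and only cites \cite[Proposition 3.5]{dolecki1978convexity}. What is wrong is your extended-value paragraph, in its specific claims.

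If $f(\bar x)=-\infty$, then (i) and (iii) hold trivially. However, $f^\Phi(\bar y)\ge \Phi(\bar x,\bar y)-f(\bar x)=+\infty$, so (ii) reads $-\infty+\infty=\Phi(\bar x,\bar y)$. That is false under either convention for $\infty-\infty$, because $\Phi(\bar x,\bar y)\in\bR$. So it is not true that all three statements hold. Likewise, if $f\equiv+\infty$, then (i) holds at every $\bar x$, while $f^\Phi\equiv-\infty$ and (ii) equates $\pm\infty$ with the real number $\Phi(\bar x,\bar y)$. Adopting $\infty-\infty=\infty$ does not rescue the equivalence.

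In these degenerate cases the statement as printed is literally false, so the fix is a hypothesis, not a convention: assume $f(\bar x)\in\bR$, or assume $f$ proper. Under properness the case $f(\bar x)=+\infty$ is harmless, because all three statements then fail together:
\begin{itemize}
\item (i) would force $f\equiv+\infty$;
\item the infimum in (iii) is $<+\infty$ because $\dom f\neq\emptyset$;
\item $f^\Phi(\bar y)>-\infty$ makes the left side of (ii) equal to $+\infty$.
\end{itemize}

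Finiteness is also what your main steps silently use. In (ii)$\Rightarrow$(i), and in $f^{\Phi\Phi}(\bar x)\ge\Phi(\bar x,\bar y)-f^\Phi(\bar y)=f(\bar x)$, you need $f^\Phi(\bar y)=\Phi(\bar x,\bar y)-f(\bar x)\in\bR$. In the final $\Phi$-convex step you need $f^\Phi$ proper (or $f^\Phi(\bar y)\in\bR$) to apply the parallel equivalence to $g=f^\Phi$. This holds when $f$ is proper and $\Phi$-convex: then $f^\Phi>-\infty$, and $f^\Phi\not\equiv+\infty$, since otherwise $f=f^{\Phi\Phi}\equiv-\infty$. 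State finiteness or properness as a standing assumption at the outset rather than as a closing remark.
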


Note that due to the equivalences shown above, another way of writing the $\Phi$-subdifferential of $f$ at a point $x \in X$ is via $\partial_\Phi f(x) = \{y \in Y \mid f(x)+f^\Phi(y) = \Phi(x, y)\}$, while more generally $\gph \partial_\Phi f = \{(x,y)\in X \times Y \mid f(x)+f^\Phi(y) = \Phi(x, y)\}$. The aforementioned relations indicate the connection of the $\Phi$-subdifferential with the (generalized) proximal mapping via $(\partial_\Phi f)^{-1}(y) = \argmin_{x \in X}f(x)-\Phi(x,y)$.

In order to better describe the concepts related to $\Phi$-convexity we provide specific examples of coupling functions in the following. The first example is from \cite{oikonomidis2025forwardbackward}.
\begin{example}[$\Phi$-convexity with the quadratic coupling] \label{example:quadratic}
    A coupling function that is standard in the related literature is $\Phi(x,y) = -\tfrac{1}{2\gamma}\|x-y\|^2$ for $\gamma > 0$. For this coupling, in light of \cite[Proposition 3.4]{laude2023dualities}, any $\rho$-hypoconvex function is a $\Phi$-convex function for $\gamma \in (0, 1/\rho]$, while its $\Phi$-subdifferential is given by $\partial_\Phi f(x) = x + \gamma \partial f(x)$ for any $x \in \dom f$. The $\Phi$-conjugate of a function $f:\bR^n \to \exR$ is given in this setting by $f^\Phi(y) = \sup_{x \in \bR^n}\Phi(x,y) - f(x) = -\inf_{x \in \bR^n}f(x) + \tfrac{1}{2\gamma}\|x-y\|^2$, which is the negative Moreau envelope of the function $f$ \cite[Definition 1.22]{RoWe98}. Note if $f$ is $\Phi$-convex, we have from \cref{thm:phi_envelope} that $f=f^{\Phi\Phi}$ and thus $f(x) = -\inf_{y \in \bR^n}f^\Phi(y) + \tfrac{1}{2\gamma}\|x-y\|^2$ is a negative Moreau envelope itself.
\end{example}
\begin{example}[$\Phi$-convexity with the left Bregman coupling] \label{example:left_bregman}
    Generalizing the previous example to Bregman divergences, let $h:\bR^n \to \exR$ be Legendre and continuous on $\dom h$ which is closed. Then, taking $X = \dom h$, $Y = \intr \dom h$ and $\Phi(x,y) = -\tfrac{1}{\gamma}D_h(x,y)$ we have $f^\Phi(y) = \sup_{x \in \dom h} -\tfrac{1}{\gamma}D_h(x,y) - f(x) = -\inf_{x \in \bR^n} f(x) + \tfrac{1}{\gamma} D_h(x,y)$, which is the negative left Bregman--Moreau envelope \cite[Definition 3.1]{laude2020bregman}. When $f$ is proper, $\dom h = \bR^n$ and $h$ is supercoercive, it was shown in \cite[Proposition 3.4]{laude2023dualities} that for $\gamma = 1$, $\Phi$-convexity of $f$ is equivalent to $f+h$ being convex. In this case, the $\Phi$-subdifferential is given by $\partial_\Phi f = \nabla h^* \circ(\nabla h + \partial f)$. These somewhat strict assumptions on $h$ have recently been lifted in \cite{themelis2025natural,wang2025bregman} that also consider a $\Phi$-convexity approach.
\end{example}
\begin{example}[$\Phi$-convexity with the right Bregman coupling] 
\label{example:right_bregman}
    Switching the arguments of the Bregman divergence from the previous example we get $\Phi(x,y) = -\tfrac{1}{\gamma} D_h(y,x)$ and the $\Phi$-conjugate becomes $f^\Phi(y) = - \inf_{x \in \bR^n} f(x) + \tfrac{1}{\gamma}D_h(y,x)$, i.e., the negative right Bregman--Moreau envelope from \cite[Definition 3.7]{laude2020bregman}.
\end{example}
\begin{example}[$\Phi$-convexity with the anisotropic coupling] \label{example:aniso}
    Let $\phi:\bR^n \to \bR$ be Legendre and $\dom \phi = \bR^n$. Choosing $\Phi(x,y) = -\gamma \star \phi(x-y)$ leads to $f^\Phi(y) = -\inf_{x \in \bR^n}f(x) + \gamma \star \phi(x-y)$ which is the left anisotropic Moreau envelope as defined in \cite[Definition 3.7]{laude2025anisotropic}. Assuming moreover that $\phi$ is supercoercive, if $f$ is anisotropically weakly convex \cite[Definition 3.8]{laude2023dualities}, then $\partial_\Phi f(x) = x-\nabla \phi^*(-\partial f(x))$.
\end{example}
\begin{example}[$\Phi$-convexity with $\varphi$-divergences] \label{example:phi_div}
    Except for the aforementioned examples, an interesting coupling function that is less explored in the related literature is the $\varphi$-divergence from \cite{teboulle1992entropic,iusem1994entropy}. Consider $\varphi:\bR_{++} \to \bR$ strictly convex and $\cC^1(\bR_{++})$ and such that $\varphi(1) = \varphi'(1) = 0$, $\varphi''(1) > 0$, $\lim_{t\to0}\varphi'(t) = -\infty$ and $\lim_{t \to \infty}\varphi(t)=+\infty$. Clearly, $\varphi(t) > 0$ for $t \neq 1$ and $\varphi$ is an essentially smooth function meaning that $\varphi$ is Legendre. Therefore, $\varphi^*$ is also Legendre and $\varphi'$ is one-to-one from $\bR_{++}$ to $\intr \dom \varphi^* = \bR$. We extend the domain of $\varphi$ to $[0,+\infty)$ continuously by redefining $\varphi$ if needed and define the $\varphi$-divergence generated by $\varphi$, $d_\varphi: \bR^n_{+} \times \bR^n_{++} \to \bR$ as
    \begin{equation}
        d_\varphi(x,y) = \sum_{i=1}^n y_i \varphi(\tfrac{x_i}{y_i}).
    \end{equation}
    Clearly for $x \in \bR^n_{++}$, $d_\varphi \geq 0$ and $d_\varphi(x,y) = 0$ if and only if $x = y$ making it a suitable distance-like function. Choosing now $\Phi(x,y) = -\gamma d_\varphi(x,y)$ and $X=\bR^n_{+},Y=\bR^n_{++}$ we have that $f^\Phi(y) = -\inf_{x \in \bR^n_{++}}\gamma d_\varphi(x,y) + f(x)$. This type of envelope functions was extensively studied for convex $f$ in \cite{teboulle1992entropic,iusem1994entropy,iusem1995convergence,teboulle1997convergence}.
\end{example}
The coupling functions we have seen so far couple (a subset of) $\bR^n$ with (a subset of) $\bR^n$, i.e. $x$ and $y$ are vectors of the same space. Nevertheless, the framework of $\Phi$-convexity allows for more robust parametrizations, where $Y$ is in a higher dimensional space than $X$. A prominent example of such a coupling leads to the basic quadratic transform from \cite[Example 11.66]{RoWe98} that we describe next:
\begin{example}[Basic quadratic transform] \label{example:quadratic_transform}
    Let $X = \bR^n$ and $Y = \bR^n \times \bR$ and $\Phi(x,(v,r)) = \langle x,v \rangle - \tfrac{r}{2}\|x\|^2$. By standard convex conjugacy we then have $f^\Phi(v,r) = (f+\tfrac{r}{2}\|\cdot\|^2)^*(v)$. Therefore, if $f$ is prox-bounded and not identically infinity, $f^\Phi$ is a proper, lsc and convex function on $\bR^n \times \bR$ and thus the transformation couples nonconvex prox-bounded functions with convex ones.
\end{example}
We make the following assumption on the coupling function $\Phi$ and the sets $X,Y$ that we consider valid throughout the text.
\begin{assumption}
    $X \subseteq \bR^n$ is closed with nonempty interior and $Y \subseteq \bR^m$ is nonempty and open. Moreover, $\Phi: X \times Y \to \bR$ is continuous relative to $X \times Y$ and continuously differentiable on $\intr (X) \times Y$.
\end{assumption}
\begin{remark}
    Some properties of $\Phi$-convex functions $f$ follow immediately from the assumptions on $\Phi$. For example, since $\Phi$ is continuous (and more generally lsc in the first argument), any $\Phi$-convex function is lsc as the pointwise supremum of lsc functions \cite[Proposition 1.26]{RoWe98}. Moreover if $\Phi(\cdot,y)$ is convex for all $y \in Y$, $f$ is also convex due to the same reasoning. Another interesting case is when $\|\nabla_{xx}^2 \Phi(x,y)\|$ is bounded for all $x,y$ which implies that $\Phi(\cdot,y)$ is hypoconvex for all $y \in Y$ and thus that $f$ itself is also hypoconvex. In this setting, $f$ is then twice differentiable a.e.\ due to Alexandrov theorem, a case that is standard in the OT literature (see \cite[Example 2]{staudt2025uniqueness}).
\end{remark}

\section{Existence and regularity of \texorpdfstring{$\Phi$}{Phi}-subgradients}
\label{sec:phi_subg}
In this section we study the existence of $\Phi$-subgradients along with their continuity and differentiability properties. Under suitable assumptions, the existence and form of $\Phi$-subgradients for the Bregman and anisotropic coupling functions (\cref{example:left_bregman} and \cref{example:aniso}) was studied in \cite{laude2023dualities}. More examples of coupling functions can be found in \cite{oikonomidis2025forwardbackward}. We generalize these results even further based on the notion of a \emph{strong twist} adapted from optimal transport \cite[p. 313]{Vil08}.

\begin{definition}[Twist] \label{def:twist}
Let $\Phi :X \times Y \to \bR$, $X, Y \subseteq \bR^n$ be continuously differentiable on $\intr (X) \times Y$. Then we say that
\begin{defenum}
    \item \label{def:twist:twist} $\Phi$ is twist if for any $x \in \intr X$, $\nabla_x \Phi(x,\cdot) : Y \to \bR^n$ is one-to-one.
    \item \label{def:twist:strong_local} $\Phi$ is local strong twist at $(\bar x, \bar y) \in \intr (X) \times Y$ if for $\bar v :=\nabla_x \Phi(\bar x, \bar y)$ 
    there exists a unique continuously differentiable mapping $G(x,v)$ such that $G(\bar x, \bar v) = \bar y$ with $\nabla_x \Phi(x, G(x,v)) = v$ in a neigborhood of $(\bar x,\bar v)$.
    \item \label{def:twist:strong_global} $\Phi$ is (global) strong twist if it is twist and locally strong twist at any $(\bar x, \bar y) \in \intr (X) \times Y$. In particular this implies that $(\nabla_x \Phi(x, \cdot))^{-1}(v) := \{y \in Y \mid \nabla_x \Phi(x,y) - v=0\}$ is single-valued and continuously differentiable in $(x,v)$.
\end{defenum} 
\end{definition}

\begin{example}[Twist examples] \label{example:twist}
Most of the coupling functions presented in \Cref{sec:gen_convexity} are actually global strong twists under suitable regularity assumptions, as we show next:
    \begin{examplenum}
        \item In the setting of \cref{example:quadratic}, it is straightforward that $\nabla_x \Phi(x,y) = -\tfrac{1}{\gamma}(x-y)$ and thus $(\nabla_x \Phi(x, \cdot))^{-1}(v) = x + \gamma v$.
        \item In the setting of \cref{example:left_bregman}, $\nabla_x \Phi(x,y) = -\tfrac{1}{\gamma}(\nabla h(x)-\nabla h(y))$ and thus $(\nabla_x \Phi(x, \cdot))^{-1}(v) = \nabla h^*(\nabla h(x) + \gamma v)$.
         \item In the setting of \cref{example:right_bregman}, assume moreover that $\dom h = \bR^n$ and $\nabla^2 h$ is positive-definite in $\bR^n$. Then,  $\nabla_x \Phi(x,y) = \tfrac{1}{\gamma}(\nabla^2 h(x)y-\nabla^2 h(x)x)$ and thus $(\nabla_x \Phi(x, \cdot))^{-1}(v) = x + \gamma [\nabla^2 h(x)]^{-1} v$.
         \item In the setting of \cref{example:aniso}, $\nabla_x \Phi(x,y) = -\nabla \phi(\tfrac{1}{\gamma}(x-y))$ and thus $(\nabla_x \Phi(x, \cdot))^{-1}(v) = x - \gamma \nabla \phi^*(-v)$.
         \item In the setting of \cref{example:phi_div}, $[\nabla_x \Phi(x,y)]_i = -\varphi'(x_i/y_i)$ and thus $[(\nabla_x \Phi(x, \cdot))^{-1}(v)]_i= x_i/{\varphi^*}'(-v_i)$.
    \end{examplenum}
\end{example}

In light of the implicit function theorem, a sufficient condition for $\Phi$ to be a local strong twist is the nonsingularity of $\nabla^2_{xy}\Phi(x,y)$. This is described in the following lemma:
\begin{lemma}
    Let $\Phi$ be twice continuously differentiable on $\intr (X) \times Y$. If $\nabla^2_{xy}\Phi(\bar x,\bar y)$ is nonsingular, then $\Phi$ is a local strong twist at $(\bar x, \bar y)$.
\end{lemma}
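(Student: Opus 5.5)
We will apply the classical implicit function theorem to the map $F:\intr(X)\times\bR^n\times Y\to\bR^n$ defined by
\begin{equation*}
F(x,v,y) := \nabla_x \Phi(x,y) - v .
\end{equation*}
Since $\Phi$ is twice continuously differentiable on $\intr(X)\times Y$, the map $F$ is continuously differentiable on the open set $\intr(X)\times\bR^n\times Y$. Set $\bar v:=\nabla_x\Phi(\bar x,\bar y)$; then $F(\bar x,\bar v,\bar y)=0$. The partial Jacobian of $F$ with respect to $y$ at this point is the mixed second derivative $\nabla^2_{xy}\Phi(\bar x,\bar y)$, viewed as the $n\times m$ matrix $y\mapsto \nabla_x\Phi$. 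By hypothesis it is nonsingular, which in particular forces $m=n$, consistent with \cref{def:twist}.

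The implicit function theorem then gives three things. There are neighborhoods $U\in\cN((\bar x,\bar v))$ and $W\in\cN(\bar y)$ with $W\subseteq Y$ (possible since $Y$ is open). There is a $\cC^1$ mapping $G:U\to W$ with $G(\bar x,\bar v)=\bar y$. And for $(x,v,y)\in U\times W$ we have $F(x,v,y)=0$ if and only if $y=G(x,v)$. In particular $\nabla_x\Phi(x,G(x,v))=v$ on $U$, which is exactly the defining identity in \cref{def:twist:strong_local}. As a byproduct, the derivative formulas $\nabla_v G=[\nabla^2_{xy}\Phi]^{-1}$ and $\nabla_x G=-[\nabla^2_{xy}\Phi]^{-1}\nabla^2_{xx}\Phi$, evaluated at $(x,G(x,v))$, come from differentiating this identity, though the lemma does not need them.

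The only delicate step is the word \emph{unique} in \cref{def:twist:strong_local}. I will read it in the local sense: any continuous mapping $\tilde G$ with $\tilde G(\bar x,\bar v)=\bar y$ and $\nabla_x\Phi(x,\tilde G(x,v))=v$ near $(\bar x,\bar v)$ must agree with $G$ on some neighborhood. To see this, note that by continuity $\tilde G$ maps a smaller neighborhood $U'\subseteq U$ into $W$. On $U'$ the uniqueness clause of the implicit function theorem then forces $\tilde G=G$.

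Global uniqueness over all of $Y$ cannot be expected without a twist assumption. This is why \cref{def:twist:strong_global} adds twist separately, so no more is claimed here.
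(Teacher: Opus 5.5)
Your proposal is correct and matches the paper's proof: both apply the classical implicit function theorem to $(x,v,y)\mapsto\nabla_x\Phi(x,y)-v$ at $(\bar x,\bar v,\bar y)$, whose partial Jacobian in $y$ is the nonsingular $\nabla^2_{xy}\Phi(\bar x,\bar y)$. Your explicit local reading of ``unique'' (any continuous $\tilde G$ with the same properties agrees with $G$ near $(\bar x,\bar v)$) only spells out what the paper leaves implicit.
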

\begin{proof}
    The proof follows directly by the classical implicit function theorem: consider the function $H: \bR^n \times \bR^n \times \bR^n \to \bR^n$ given by $H(x, v, y) = \nabla_x \Phi(x,y) - v$. Then, $H(\bar x, \bar v, \bar y) = 0$ and $\nabla_y H(\bar x, \bar v, \bar y)$ is nonsingular, which implies the claimed result.
\end{proof}
Note that in general, even if for $\bar x \in \intr X$, $\nabla_{xy}^2 \Phi(\bar x,y)$ is nonsingular for any $y \in Y$ the mapping $\nabla_x \Phi(\bar x, \cdot)$ might not be injective and as such $\Phi$ might not be a twist. A sufficient condition for $\nabla_x \Phi(\bar x, \cdot)$ to be injective, assuming that $Y$ is a convex set, is that $\nabla_{xy}^2 \Phi(\bar x,y)$ is positive-definite for each $y \in Y$ as a result of the mean value theorem. This is actually the case in most of the examples we have seen, e.g. for $\Phi(x,y) = -\tfrac{1}{2\gamma}\|x-y\|^2$, clearly $\nabla_{xy}^2 \Phi(x,y) = \tfrac{1}{\gamma}I \succ 0$.

In contrast to the setting of classical convexity, a full-domain (even smooth) $\Phi$-convex function can have an everywhere empty $\Phi$-subdifferential. A standard example of this can be found in \cite{dolecki1978convexity}, where the function $f \equiv \rho$ for some $\rho \in \bR$ and the coupling $\Phi(x, y) = \exp(x-y)$ parametrized by $ y \in \bR$ are considered. It is straightforward that $f$ is $\Phi$-convex, but clearly for any $y \in \bR$ we cannot have $\exp(-y)(\exp(x)-\exp(\bar x)) \leq 0$ for all $x \in \bR$, i.e. $\partial_\Phi f(\bar x) = \emptyset$ for all $\bar x \in \bR$. Another interesting example of a $\Phi$-convex function with an empty subdifferential at a point is $f(x) = -|x|$ where $\Phi$ is as in \cref{example:quadratic_transform}. This function is $\Phi$-convex but $\partial_\Phi f(0) = \emptyset$, as there is no quadratic function below $f$ which is tangent to $f$ at $0$. An interesting case where $\Phi$-convexity implies nonempty $\Phi$-subdifferentials at every point is that of the coupling $\Phi(x,y) = -L \|x-y\|$ with $X=Y=\bR^n$ (or more generaly in metric spaces with $\Phi(x,y) = -L d(x,y)$). In this case every $L$-Lipschitz continuous function is $\Phi$-convex and has a nonempty $\Phi$-subdifferential in light of \cite[Proposition 2.1.6]{pallaschke2013foundations}.

Nevertheless, Ekeland's variational principle can be utilized in order to prove the existence of $\Phi$-subgradients under some extra regularity assumptions on $f$ and for $\Phi$ being a local strong twist. This is described in the following proposition:

\begin{proposition}[existence of $\Phi$-subgradients]
\label{prop:phi_subg_existence}
    Let $\bar x \in \intr X$ and $f: X \to \exR$ be finite, strictly continuous at $\bar x$ and $\Phi$-convex on $X$ w.r.t. $\Phi$ which is twist and a local strong twist at $(\bar x, \bar y)$ for  $\nabla_x \Phi(\bar x, \bar y) \in \partial f(\bar x)$. Then, $\partial_\Phi f(\bar x)$ is nonempty and bounded. 
\end{proposition}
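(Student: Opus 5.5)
Since $f$ is $\Phi$-convex, $f = \sup_{i\in\cI}\Phi(\cdot,y_i)-\beta_i$ by \cref{thm:phi_envelope}. Equivalently, $f = f^{\Phi\Phi}$, so $f(\bar x)=\sup_{y\in Y}\Phi(\bar x,y)-f^\Phi(y)$. The plan has four steps: (a) take an approximating sequence $y^k$ of this supremum; (b) show each $y^k$ is an $\varepsilon_k$-$\Phi$-subgradient; (c) use Ekeland's variational principle to control $\nabla_x\Phi(\bar x,y^k)$; (d) use the twist properties to extract a convergent subsequence whose limit is a genuine $\Phi$-subgradient.

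\textbf{Step (a)--(b).} Pick $y^k$ with $\Phi(\bar x,y^k)-f^\Phi(y^k)\ge f(\bar x)-\varepsilon_k$, where $\varepsilon_k\downarrow 0$. Using $f\ge \Phi(\cdot,y^k)-f^\Phi(y^k)$ (\cref{thm:phi_envelope:fenchel_young}), we get
\[
f(x)-\Phi(x,y^k)\ \ge\ -f^\Phi(y^k)\ \ge\ f(\bar x)-\Phi(\bar x,y^k)-\varepsilon_k \quad\text{for all } x\in X,
\]
so $y^k\in\partial_\Phi^{\varepsilon_k} f(\bar x)$. Equivalently, $\bar x\in\varepsilon_k$-$\argmin\,(f-\Phi(\cdot,y^k))$.

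\textbf{Step (c).} Apply Ekeland's principle to the lsc function $f-\Phi(\cdot,y^k)$ on the closed set $X$ with parameter $\sqrt{\varepsilon_k}$. This yields $x^k$ with $\|x^k-\bar x\|\le\sqrt{\varepsilon_k}$ such that $x^k$ minimizes $f-\Phi(\cdot,y^k)+\sqrt{\varepsilon_k}\|\cdot-x^k\|$. Since $x^k\to\bar x\in\intr X$ and $f$ is strictly continuous near $\bar x$, Fermat's rule gives
\[
\nabla_x\Phi(x^k,y^k)\in\partial f(x^k)+\sqrt{\varepsilon_k}\,\overline B(0,1).
\]
Strict continuity makes $\partial f$ locally bounded with a closed graph. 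Hence $v^k:=\nabla_x\Phi(x^k,y^k)$ is bounded, and along a subsequence $v^k\to\bar v\in\partial f(\bar x)$.

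\textbf{Step (d), the main obstacle.} We must pass from boundedness of $v^k$ to convergence of $y^k$ itself. The local strong twist is only given at $(\bar x,\bar y)$ with $\nabla_x\Phi(\bar x,\bar y)\in\partial f(\bar x)$, while the limit $\bar v$ need not equal that particular gradient. My first attempt is to use the twist hypothesis globally: if the inverse $G(x,v)$ is defined and continuous near every point of $\{\bar x\}\times\partial f(\bar x)$, then $y^k=G(x^k,v^k)$ converges to $\hat y:=G(\bar x,\bar v)$. I expect this step to require the strong twist near all relevant $v$, or at least the reading that the hypothesis holds for each $\bar y$ with $\nabla_x\Phi(\bar x,\bar y)\in\partial f(\bar x)$. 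If only the single point is available, I would instead restrict the Ekeland argument so that $v^k\to\nabla_x\Phi(\bar x,\bar y)$, for example by localizing near $\bar y$.

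\textbf{Conclusion.} Given $y^k\to\hat y\in Y$, pass to the limit in the $\varepsilon_k$-subgradient inequality using continuity of $\Phi$. This gives $f(x)\ge f(\bar x)+\Phi(x,\hat y)-\Phi(\bar x,\hat y)$ for all $x$, so $\hat y\in\partial_\Phi f(\bar x)\neq\emptyset$. For boundedness, any $y\in\partial_\Phi f(\bar x)$ satisfies $\bar x\in\argmin\,(f-\Phi(\cdot,y))$ by \cref{thm:phi_subgradients}, hence $\nabla_x\Phi(\bar x,y)\in\partial f(\bar x)$, a bounded set. By the twist property, $y=(\nabla_x\Phi(\bar x,\cdot))^{-1}(v)$ for some $v\in\partial f(\bar x)$. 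Local boundedness and continuity of $G$ on the compact set $\{\bar x\}\times\partial f(\bar x)$ then give that $\partial_\Phi f(\bar x)$ is bounded.
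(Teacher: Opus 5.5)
Your proposal follows the paper's proof step for step: near-maximizers $y^k$ in $f(\bar x)=f^{\Phi\Phi}(\bar x)$ are $\varepsilon_k$-$\Phi$-subgradients; Ekeland with parameter $\sqrt{\varepsilon_k}$ plus Fermat's rule gives $\nabla_x\Phi(x^k,y^k)\in\partial f(x^k)+\sqrt{\varepsilon_k}\,\overline{B}(0,1)$; local boundedness of $\partial f$ yields a cluster point $\bar v$; the twist gives $y^k=G(x^k,v^k)\to G(\bar x,\bar v)$; you pass to the limit; and boundedness follows from $\partial_\Phi f(\bar x)\subseteq G(\bar x,\partial f(\bar x))$. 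The issue you raise in step (d) is real and the paper does not resolve it either---it simply applies the local strong twist at the limit $(\bar x,\bar v)$ and uses $G$ on all of $\{\bar x\}\times\partial f(\bar x)$, which is exactly your first reading and the one to commit to (strong twist at every $\bar y$ with $\nabla_x\Phi(\bar x,\bar y)\in\partial f(\bar x)$ does not by itself ensure $\bar v\in\nabla_x\Phi(\bar x,Y)$), whereas your fallback of localizing near the given $\bar y$ fails, because the $y^k$ are dictated by the global supremum defining $f^{\Phi\Phi}(\bar x)$ (restricting it to a neighborhood of $\bar y$ can lower its value below $f(\bar x)$) and Ekeland's principle gives no control over which element of $\partial f(\bar x)$ the $v^k$ approach.
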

\begin{proof}
    We pattern the proof of \cite[Proposition 3.13]{laude2023dualities}. Since $f$ is $\Phi$-convex, $f = f^{\Phi \Phi}$ and as such $f(\bar x) = \sup_{y \in Y}\Phi(\bar x,y) - f^\Phi(y)$, which further implies that for any $\varepsilon > 0$ there exists a $\bar y_\varepsilon \in Y$ such that
    \begin{align*}
        f(\bar x) - \varepsilon \leq \Phi(\bar x, \bar y_\varepsilon) - f^\Phi(\bar y_\varepsilon) \leq f(\bar x).
    \end{align*}
    Moreover, in light of the $\Phi$-Fenchel--Young inequality, we have that $f(x) + f^\Phi(\bar y_\varepsilon) \geq \Phi(x , \bar y_\varepsilon)$ for any $x \in X$, which means that $- f^\Phi(\bar y_\varepsilon) \leq f(x) - \Phi(x , \bar y_\varepsilon)$. Substituting this in the inequality above, we obtain:
    \begin{equation} \label{eq:lim_arg}
        f(\bar x) - \Phi(\bar x, \bar y_\varepsilon) \leq f(x) - \Phi(x , \bar y_\varepsilon) + \varepsilon,
    \end{equation}
    which is exactly the $\varepsilon$-$\Phi$-subgradient inequality \eqref{eq:phi_subgradient_ineq}. This inequality implies that $\bar x \in \varepsilon-\argmin_{x \in X}f(x) - \Phi(x , \bar y_\varepsilon)$ and as such we can invoke Ekeland's variational principle \cite[Theorem 1.1]{ekeland1974variational} with $\lambda := \sqrt{\varepsilon}$ and obtain the existence of a point $\bar x_\varepsilon \in X \cap \overline{B}(\bar x; \sqrt{\varepsilon})$ with $f(\bar x_\varepsilon) - \Phi(\bar x_\varepsilon , \bar y_\varepsilon) \leq f(\bar x) - \Phi(\bar x , \bar y_\varepsilon)$ and $\bar x_\varepsilon := \argmin_X f(\cdot) - \Phi(\cdot , \bar y_\varepsilon) + \sqrt{\varepsilon}\|\cdot - \bar x_\varepsilon\|$. Now, since $\bar x \in \intr X$, by taking $\varepsilon$ small enough such that $\overline{B}(\bar x; \sqrt{\varepsilon}) \subset \intr X$, the optimality conditions of this minimization problem become $0 \in \partial f(\bar x_\varepsilon) - \nabla_x \Phi(\bar x_\varepsilon, \bar y_\varepsilon) + \overline{B}(0; \sqrt{\varepsilon})$, which implies that:
    \begin{equation}
        \nabla_x \Phi(\bar x_\varepsilon, \bar y_\varepsilon) - u_\varepsilon =: \bar v_\varepsilon \in \partial f(\bar x_\varepsilon),
    \end{equation}
    holds for some $u_\varepsilon$ with $\|u_\varepsilon\| \leq \sqrt{\varepsilon}$. Since $f$ is a pointwise supremum over continuous functions it is lsc and as it is also strictly continuous at $\bar x$, $x \mapsto \partial f(x)$ is locally bounded at $\bar x$ \cite[Theorem 9.13]{RoWe98}. Up to extracting a subnet, this implies that $\bar v_\varepsilon \to \bar v$ for some $\bar v \in \bR^n$ as $\varepsilon$ goes to $0$. 

    In light of \cref{def:twist:strong_local}, there exists a unique mapping $G: \bR^n \times \bR^n \to \bR^n$ such that $\nabla_x \Phi(x, G(x, \upsilon)) = \upsilon$ in a neighborhood $U$ of $(\bar x, \bar v)$, which is continuously differentiable on $U$. Now for $u$ close enough to $0$ and $v$ close enough to $\bar v$, $\upsilon = u+v \in U$. Therefore, up to extracting a subnet if needed and since $\Phi$ is a twist, we have that $\bar y_\varepsilon = G(\bar x_\varepsilon,\bar \upsilon_\varepsilon)$. Since now $(\bar x_\varepsilon,\bar \upsilon_\varepsilon) \to (\bar x, \bar v)$ and $G$ is continuous on $U$ we have that $\bar y_\varepsilon \to \bar y$. Finally we can pass to the limit in \eqref{eq:lim_arg} and obtain
    \begin{equation}
        f(\bar x) - \Phi(\bar x, \bar y) \leq f(x) - \Phi(x , \bar y) \qquad \forall x\in \bR^n,
    \end{equation}
    which is the claimed result.

    Now note that from the $\Phi$-subgradient inequality and the fact that $\Phi$ is a twist we have that $\partial_\Phi f(\bar x) \subseteq G(\bar x, \partial f(\bar x))$, while since $f$ is strictly continuous, $\partial f(\bar x)$ is bounded and thus so is $G(\bar x, \partial f(\bar x))$ owing to the continuity of $G$. Therefore, $\partial_\Phi f(\bar x)$ is compact.
\end{proof}
Similar results on the nonemptyness of the $\Phi$-subdifferential under $\Phi$-convexity can be found in the OT theory literature \cite[Proposition C.4]{gangbo1996geometry} and \cite[Theorem 10.24]{Vil08}. From the analysis above it is also clear that $\partial_\Phi f$ is outer semicontinuous at every $\bar x \in \dom \partial_\Phi f$ relative to $\dom \partial_\Phi f$. To better see that take any $\{x^\nu\}_{\nu \in \bN}$ with $x^\nu \in \dom \partial_\Phi f$ such that $x^\nu \to \bar x$ and a corresponding $y^\nu \to \bar y$. Then, from the $\Phi$-subgradient inequality we have
\begin{equation*}
    f(x) - \Phi(x,y^\nu) \geq f(x^\nu) - \Phi(x^\nu, y^\nu) \qquad \forall x \in X.
\end{equation*}
Using now the continuity of $\Phi$ and the semicontinuity of $f$ we obtain that $\bar y \in \partial_\Phi f(\bar x)$ and it is also clear that $\gph \partial_\Phi f$ is closed in $X \times Y$.

Next we refine the results of \cref{prop:phi_subg_existence} in order to obtain a sufficient condition for the continuity and differentiability of the $\Phi$-subdifferential, in the case where it is a singleton.
\begin{corollary}[continuity and differentiability of $\Phi$-subgradients] \label{thm:phi_subg}
    Let $f: X \to \bR$ be $\Phi$-convex on $X$ and assume that $\Phi$ is a global strong twist for $\nabla f(\bar x)$ for every $\bar x \in \intr X$. Then,
    \begin{corenum}
        \item if $f \in \mathcal{C}^1(\intr X)$, $\phif$ is continuous on $\intr X$, with $\phif(x) = G(x, \nabla f(x))$; \label{thm:phi_subg:cont}
        \item if $f \in \mathcal{C}^2(\intr X)$, $\phif$ is continuously differentiable on $\intr X$. \label{thm:phi_subg:diff}
    \end{corenum}
\end{corollary}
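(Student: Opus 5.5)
Both parts reduce to \cref{prop:phi_subg_existence} together with the first-order optimality condition coming from \cref{thm:phi_subgradients}. Fix $\bar x \in \intr X$. Since $f \in \cC^1(\intr X)$, $f$ is strictly continuous at $\bar x$ and $\partial f(\bar x) = \{\nabla f(\bar x)\}$.

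\textbf{Part (i): existence.} Because $\Phi$ is a global strong twist, the mapping $G(x,v) := (\nabla_x \Phi(x,\cdot))^{-1}(v)$ from \cref{def:twist:strong_global} is well defined. I set $\bar y := G(\bar x, \nabla f(\bar x))$. Then $\nabla_x\Phi(\bar x,\bar y) = \nabla f(\bar x) \in \partial f(\bar x)$, and $\Phi$ is a local strong twist at $(\bar x,\bar y)$. So \cref{prop:phi_subg_existence} applies and gives $\partial_\Phi f(\bar x) \neq \emptyset$.

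\textbf{Part (i): single-valuedness.} Take any $y \in \partial_\Phi f(\bar x)$. By \cref{thm:phi_subgradients}, $\bar x$ minimizes $f - \Phi(\cdot,y)$ over $X$. Since $\bar x$ is interior and both functions are differentiable there, Fermat's rule gives $\nabla f(\bar x) = \nabla_x \Phi(\bar x, y)$. The twist property (injectivity of $\nabla_x\Phi(\bar x,\cdot)$) then forces $y = G(\bar x,\nabla f(\bar x))$. Hence $\partial_\Phi f(\bar x)$ is the singleton $\{G(\bar x,\nabla f(\bar x))\}$, and $\phif(x) = G(x,\nabla f(x))$ on $\intr X$.

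\textbf{Part (i): continuity.} Continuity of $\phif$ follows because $G$ is continuously differentiable in $(x,v)$ and $x\mapsto(x,\nabla f(x))$ is continuous. The only subtlety is whether $G$ is a globally defined $\cC^1$ map or merely locally $\cC^1$ near each base point. I read \cref{def:twist:strong_global} as providing local $\cC^1$ representatives near every $(x,\nabla_x\Phi(x,y))$. Uniqueness from the twist property makes these representatives glue into one map on the relevant set. So for $x$ near $\bar x$ one has $\phif(x) = G_{\rm loc}(x,\nabla f(x))$ with $G_{\rm loc}$ the local mapping at $(\bar x,\bar v)$, $\bar v = \nabla f(\bar x)$, which is enough for continuity at $\bar x$.

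\textbf{Part (ii).} If $f\in\cC^2(\intr X)$, then $x\mapsto (x,\nabla f(x))$ is $\cC^1$. Composing with the locally $\cC^1$ map $G$ gives $\phif\in\cC^1$ near every point of $\intr X$, by the chain rule. When $\Phi$ is $\cC^2$ with $\nabla^2_{xy}\Phi$ nonsingular, I can also differentiate the identity $\nabla_x\Phi(x,\phif(x)) = \nabla f(x)$ to get the explicit formula
\begin{equation*}
J\phif(x) = [\nabla^2_{yx}\Phi(x,\phif(x))]^{-1}\bigl(\nabla^2 f(x) - \nabla^2_{xx}\Phi(x,\phif(x))\bigr),
\end{equation*}
where $\nabla^2_{yx}\Phi$ denotes the Jacobian of $y\mapsto\nabla_x\Phi(x,y)$. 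This formula is only a remark, not needed for the claim.

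The main obstacle is the localization issue in part (i): ensuring that the single global selection $\phif(x)$ coincides, for $x$ near $\bar x$, with the local implicit map at $(\bar x,\nabla f(\bar x))$. The twist property handles this through uniqueness of solutions of $\nabla_x\Phi(x,y)=v$.
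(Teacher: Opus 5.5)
Your proposal is correct and follows the paper's proof essentially step by step. Nonemptiness comes from \cref{prop:phi_subg_existence}. Fermat's rule at the interior point then forces $\nabla_x\Phi(\bar x,y)=\nabla f(\bar x)$, so the twist gives $\phif(x)=G(x,\nabla f(x))$, and continuity (resp.\ continuous differentiability) follows by composing with $x\mapsto(x,\nabla f(x))$. Your gluing of the local $\cC^1$ representatives of $G$ via injectivity, and your check that $\bar y=G(\bar x,\nabla f(\bar x))$ exists, only make explicit what the paper reads off directly from \cref{def:twist:strong_global}.
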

\begin{proof}
``\labelcref{thm:phi_subg:cont}'':
In light of \cref{prop:phi_subg_existence}, $\partial_\Phi f(\bar x)$ is nonempty.
    From the definition of $\Phi$-subgradients, it is straightforward that for $\bar y$ to be a $\Phi$-subgradient of $f$ at $\bar x \in \intr X$, it is necessary that $\nabla_x \Phi(\bar x,\bar y) = \nabla f(\bar x)$ and hence $\bar y \in (\nabla_x \Phi(\bar x, \cdot))^{-1}(\nabla f(\bar x))$. Since $(\nabla_x \Phi(x, \cdot))^{-1}(v)$ is single-valued and continuous in $(x, v)$ and $\nabla f$ is continuous, $\phif$ is a continuous function on $\intr X$.
    
``\labelcref{thm:phi_subg:diff}'': It follows from the proof of \cref{thm:phi_subg:cont} along with the fact that $\nabla f$ is continuously differentiable.
\end{proof}

\section{Continuity and differentiability properties of \texorpdfstring{$\Phi$}{Phi}-conjugates}
\label{sec:diff_phi_conj}
In this section we study the regularity properties of $\Phi$-conjugates. We will assume that $g(x) - \Phi(x,y)$ is uniformly level-bounded, i.e. it satisfies the following definition that we adapt from \cite[Definition 1.16]{RoWe98}.
\begin{definition}[Uniform level boundedness]
    A function $f: X \times Y \to \exR$ with values $f(x,y)$ is level-bounded in $x$ locally uniformly in $y$ if for each $\bar y \in Y$ and $\alpha \in \bR$ there is a neighborhood $V \in \mathcal{N}(\bar y)$ along with a bounded set $B \subset X$ such that $\{x \mid f(x, y) \leq \alpha \} \subset B$ for all $y \in V$; or equivalently, there is a neighborhood $V \in \mathcal{N}(\bar y)$ such that the set $\{(x,y) \mid y \in V, f(x,y) \leq \alpha\}$ is bounded in $\bR^n \times \bR^m$.
\end{definition}

We can now move on to formulating the assumptions on $g$ that we consider valid throughout the rest of the paper.
\begin{assumption}\label{assum:g}
    $g : X \to \exR$ is proper, lsc and such that $\xi(x, y) := g(x) - \Phi(x,y)$ is level bounded in $x$ locally uniformly in $y$. For $(\partial_\Phi g)^{-1}(y) := \argmin_{x \in X}g(x) - \Phi(x, y)$ we have that $\ran (\partial_\Phi g)^{-1} \subseteq \intr X$.
\end{assumption}
The uniform level-boundedness assumption amounts to the continuity of the $\Phi$-conjugate as we show in the following subsection and can be considered as an adaptation of standard prox-boundedness to the abstract setting that we study. Indeed, in the case of \cref{example:quadratic} by potentially choosing a smaller $\gamma$ we can guarantee that if $g$ is prox-bounded, $g(x) - \Phi(x,y)$ is level bounded in $x$ locally uniformly in $y$ through \cite[Theorem 1.25]{RoWe98}. Similar reasoning can be applied to the case of \cref{example:left_bregman}.

\subsection{Continuous differentiability of \texorpdfstring{$\Phi$}{Phi}-conjugates} \label{subsec:cont_diff_conj}

The continuity of the $\Phi$-conjugates is a direct consequence of \cref{assum:g}:
\begin{lemma} \label{thm:conj_cont}
    Let $g$ be as in \cref{assum:g}. Then $g^\Phi$ and $(\partial_\Phi g)^{-1}$ have the following properties:
    \begin{lemenum}
        \item \label{thm:conj_cont:dom} $(\partial_\Phi g)^{-1}(y) \neq \emptyset$ is compact for all $y \in \dom g^\Phi = Y$;
        \item \label{thm:conj_cont:conj_cont} The $\Phi$-conjugate $g^\Phi$ is continuous on $Y$;
        \item \label{thm:conj_cont:osc} For any sequence $y^\nu \to y^\star \in Y$ contained in $Y$ and $x^\nu  \in (\partial_\Phi g)^{-1}(y^\nu)$, $\{x^\nu\}_{\nu \in \bN}$ is bounded and all its cluster points lie in $(\partial_\Phi g)^{-1}(y^\star)$.
    \end{lemenum}
\end{lemma}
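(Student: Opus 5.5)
The plan is to recognize $-g^\Phi$ as a parametric minimization value function and to apply the standard parametric minimization result \cite[Theorem 1.17]{RoWe98}. Set $\xi(x,y) = g(x) - \Phi(x,y)$ on $X \times Y$, extended by $+\infty$ for $x \notin X$. Then
\[
-g^\Phi(y) = \inf_{x} \xi(x,y), \qquad (\partial_\Phi g)^{-1}(y) = \argmin_x \xi(x,y).
\]
The first task is to check the hypotheses of that theorem. Properness of $\xi$ follows from $g$ being proper and $\Phi$ being real-valued. Lower semicontinuity of $\xi$ on $\bR^n \times Y$ holds because $g$ is lsc, $X$ is closed, and $\Phi$ is continuous relative to $X \times Y$. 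Level boundedness in $x$ locally uniformly in $y$ is exactly \cref{assum:g}. Since $Y$ is open, working locally around any $\bar y \in Y$ causes no trouble.

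For \labelcref{thm:conj_cont:dom}, fix $y \in Y$. Since $\dom g \neq \emptyset$, some $x_0$ gives $\xi(x_0,y) < \infty$. The level set $\{x \mid \xi(x,y) \le \xi(x_0,y)\}$ is bounded by level-boundedness and closed by lower semicontinuity, so it is compact and nonempty. Hence the infimum is attained and finite, and $\argmin$ is a closed subset of a compact set, so it is compact. The infimum is also $> -\infty$, so $g^\Phi(y)$ is finite on all of $Y$, which gives $\dom g^\Phi = Y$.

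For \labelcref{thm:conj_cont:conj_cont}, upper semicontinuity of $g^\Phi$ is immediate: it is a supremum of the functions $y \mapsto \Phi(x,y) - g(x)$, each continuous in $y$, so $g^\Phi$ is lsc and $-g^\Phi$ is usc. The remaining direction is lower semicontinuity of $-g^\Phi$, i.e. $\liminf_\nu (-g^\Phi(y^\nu)) \ge -g^\Phi(y^\star)$. To get it:
\begin{itemize}
\item Take $y^\nu \to y^\star$ and pick $x^\nu \in \argmin \xi(\cdot, y^\nu)$.
\item Since $-g^\Phi(y^\nu) = \xi(x^\nu, y^\nu) \le \xi(x_0, y^\nu) \to \xi(x_0, y^\star)$ by continuity of $\Phi$, the values are eventually bounded above by some $\alpha$.
\item Uniform level boundedness then confines $x^\nu$ to a bounded set.
\item Along a subsequence realizing the $\liminf$, extract $x^\nu \to \hat x$. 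Lower semicontinuity of $\xi$ gives $\xi(\hat x, y^\star) \le \liminf \xi(x^\nu, y^\nu)$, and therefore $-g^\Phi(y^\star) \le \liminf (-g^\Phi(y^\nu))$.
\end{itemize}
This is the step requiring the most care, namely bounding the values uniformly so that level boundedness applies. Here it is handled by comparing against the fixed point $x_0 \in \dom g$ and using continuity of $\Phi(x_0,\cdot)$.

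For \labelcref{thm:conj_cont:osc}, the same argument applies. Boundedness of $\{x^\nu\}$ was obtained above. For any cluster point $\hat x$, lower semicontinuity together with the continuity established in \labelcref{thm:conj_cont:conj_cont} gives
\[
\xi(\hat x, y^\star) \le \liminf \xi(x^\nu, y^\nu) = \lim (-g^\Phi(y^\nu)) = -g^\Phi(y^\star) = \inf_x \xi(x, y^\star).
\]
Hence $\hat x \in (\partial_\Phi g)^{-1}(y^\star)$, and closedness of $X$ guarantees $\hat x \in X$.
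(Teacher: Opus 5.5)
Your proposal is correct and takes essentially the paper's route. The paper also treats $-g^\Phi$ as the value function of $\inf_x \{g(x)-\Phi(x,y)\}$, but simply cites \cite[Theorem 1.9]{RoWe98} for (i) and \cite[Theorem 1.17(c) and (b)]{RoWe98} for (ii) and (iii), whereas you reprove the relevant parts of those theorems by hand. One wording slip in (ii): the ``immediate'' half is lower semicontinuity of $g^\Phi$ (i.e., upper semicontinuity of $-g^\Phi$), as your own justification says, not upper semicontinuity of $g^\Phi$.
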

\begin{proof}
    ``\labelcref{thm:conj_cont:dom}'':
    To begin with, note that $g - \Phi(\cdot,y)$ is level-bounded for every $y \in Y$. Since, moreover, $g$ is lsc and proper and $\Phi(\cdot, y)$ is continuous, it is also lsc and proper. Then, from \cite[Theorem 1.9]{RoWe98}, $\inf g - \Phi(\cdot,y)$ is finite and thus so is $g^\Phi(y) = \sup_{x \in X}\Phi(x,y)-g(x) = -\inf_{x \in X} g(x)-\Phi(x,y)$, adapting the arguments of the proof to the subspace topology, noting that $X$ is closed. The fact that $(\partial_\Phi g)^{-1}$ is nonempty and compact follows from the same theorem.
    
    ``\labelcref{thm:conj_cont:conj_cont}'': For any $y \in Y$, there exists by \cref{assum:g} an $x \in (\partial_\Phi g)^{-1}(y)$ and $\Phi$ is continuous on $X \times Y$, which implies through \cite[Theorem 1.17(c)]{RoWe98} that $\inf_{x \in X}g(x) - \Phi(x,y) = -g^\Phi(y)$ is continuous on $Y$, once again by adapting the arguments to the subspace topology on $X \times Y$. 
    
    ``\labelcref{thm:conj_cont:osc}'': Follows by \cite[Theorem 1.17(b)]{RoWe98}.
\end{proof}
Note that we consider functions $g:X \to \exR$. In what follows, since we are interested in local results for points in $\intr X$, when we use results from \cite{RoWe98} or \cite{dontchev2009implicit} that refer to functions defined on $\bR^n$ we implicitly consider the extension $\tilde g:\bR^n \to \exR$ with $\tilde g(x) =  g(x)$ on $X$ and $+\infty$ on $\bR^n \setminus X$. Similarly for operators $T:X \rightrightarrows \bR^m$ we consider $T(x) = \emptyset$ for $x \notin X$, following the discussion in \cite[p. 149]{RoWe98}.

Moving on to the $\mathcal{C}^1$-properties of the generalized conjugates, we first provide an extension of the notion of prox-regularity, which in the standard setup \cite{poliquin1996generalized} is a fundamental property related to the (local) differentiability of the Moreau envelope in the absence of classical convexity. Nonlinear generalizations of prox-regularity were already introduced in the Bregman \cite{laude2020bregman,laude2021lower} and the anisotropic case \cite{laude2021lower}. They lead to local characterizations of single-valuedness and the existence of a resolvent-type expression of the corresponding nonlinear proximal mapping. 

In the following we will further generalize prox-regularity in a way similar to how the $\Phi$-subgradient inequality generalizes the Euclidean descent lemma as shown in \cite[Example 4.2]{oikonomidis2025forwardbackward}. 

\begin{definition}[$\Phi$-prox-regularity]
    Let $\bar x \in \intr X$ and $\bar y \in Y$.
    A function $g:X \to \exR$ is said to be $\Phi$-prox-regular at $\bar x$ for $(\bar v, \bar y)$ if $g$ is finite and locally lsc at $\bar x$ with $\nabla_x \Phi(\bar x, \bar y) =:\bar v\in\partial g(\bar x)$, and there exists $\varepsilon >0$ such that:
\begin{equation} \label{eq:phi_prox_regularity}
    g(x') \geq g(x) +\Phi(x', y) -\Phi(x, y),
\end{equation}
whenever $\|x' - \bar x\| \leq \varepsilon$, $\|x - \bar x\| \leq \varepsilon$, $g(x) - g(\bar x) \leq \varepsilon$, $v \in \partial g(x)$ and $\|v - \bar v\| \leq \varepsilon$, $\|y - \bar y\| \leq \varepsilon$ and $v=\nabla_x \Phi(x, y)$.
\end{definition}

In many important cases nearness of $y$ to $\bar y$ is entailed by nearness of $(x,v)$ to $(\bar x,\bar v)$ where $y$ is the unique point for which $v = \nabla_x \Phi(x, y)$ and $y$ varies continuously with $(x,v)$ and as such $y$ need not be quantified. This is captured in terms of the notion of strong twist \cref{def:twist} as we highlight in the following.

\begin{remark}[$\Phi$-prox-regularity under strong twist]
Under \cref{def:twist} the $\Phi$-prox-regularity subgradient inequality reduces to
$$
g(x') \geq g(x) +\Phi(x', G(x,v)) -\Phi(x, G(x,v)),
$$
where $y=G(x,v)$ is given in \cref{def:twist:strong_local}. In fact, in this case $G(x,v)=y$ sets up a one-to-one correspondence between $\Phi$-subgradients $y$ and classical subgradients $v$. 
This holds even globally in the anisotropic and the Bregman case: similarly to \cref{example:twist}, for $\Phi(x,y) = -\phi(y-x)$ we have that $G(x,v) = (\nabla_x \Phi(x, \cdot))^{-1}(v) = x + \nabla \phi^*(v)$. After substitution in \cref{eq:phi_prox_regularity} this yields \cite[Definition 2.13]{laude2021lower}. Using the left Bregman divergence as a coupling, we have from \cref{example:twist} that $G(x,v)=(\nabla_x \Phi(x, \cdot))^{-1}(v) = \nabla h^*(\nabla h(x)+\gamma v)$. This yields the notion of relative prox-regularity \cite[Definition 3.16]{laude2020bregman}.
\end{remark}
In the following we show that under suitable assumptions, $\Phi$-prox-regularity is implied by classical prox-regularity generalizing \cite[Proposition 3.17]{laude2020bregman} and \cite[Proposition 2.38]{laude2021lower}.
\begin{proposition}[$\Phi$-prox-regularity from prox-regularity]
    Let $(\bar x, \bar v) \in \gph \partial g$ with $\bar x \in \intr X$ a point at which $g$ is locally lsc and prox-regular with parameter $r > 0$. Let $\bar y$ be a point for which $\nabla_x \Phi(\bar x,\bar y)= \bar v$.
    Further assume that $-\Phi(\cdot, y)$ is strongly convex in the first argument for any $y$ near $\bar y$ with parameter $\delta \geq r$. Then $g$ is $\Phi$-prox-regular at $\bar x$ for $(\bar v, \bar y)$. 
\end{proposition}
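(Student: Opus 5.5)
The plan is to combine the classical prox-regularity inequality with the strong concavity of $\Phi(\cdot,y)$. Recall from \cite[Definition 13.27]{RoWe98} that prox-regularity of $g$ at $\bar x$ for $\bar v$ with parameter $r$ provides $\varepsilon_0>0$ such that
\begin{equation*}
g(x') \geq g(x) + \langle v, x'-x\rangle - \tfrac{r}{2}\|x'-x\|^2
\end{equation*}
whenever $\|x'-\bar x\|<\varepsilon_0$, $\|x-\bar x\|<\varepsilon_0$, $g(x)-g(\bar x)<\varepsilon_0$, $v\in\partial g(x)$ and $\|v-\bar v\|<\varepsilon_0$. Local lower semicontinuity at $\bar x$ and finiteness of $g(\bar x)$ are assumed, and $\bar v = \nabla_x\Phi(\bar x,\bar y)\in\partial g(\bar x)$ holds by the choice of $\bar y$. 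Hence only the inequality \eqref{eq:phi_prox_regularity} has to be verified.

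\textbf{Strong concavity of $\Phi$.} Choose $\varepsilon_1>0$ such that $-\Phi(\cdot,y)$ is $\delta$-strongly convex (on a convex neighborhood of $\bar x$ contained in $\intr X$, e.g.\ a small ball) for all $\|y-\bar y\|\le\varepsilon_1$. Since $\Phi(\cdot,y)$ is $\mathcal C^1$ on $\intr X$, strong convexity gives, for $x,x'$ in that ball,
\begin{equation*}
-\Phi(x',y) \geq -\Phi(x,y) - \langle \nabla_x\Phi(x,y), x'-x\rangle + \tfrac{\delta}{2}\|x'-x\|^2,
\end{equation*}
that is, $\Phi(x',y)-\Phi(x,y) \le \langle \nabla_x\Phi(x,y),x'-x\rangle - \tfrac{\delta}{2}\|x'-x\|^2$.

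\textbf{Combining the two inequalities.} Take $\varepsilon < \min\{\varepsilon_0,\varepsilon_1\}$, small enough that $\overline B(\bar x,\varepsilon)\subset\intr X$ lies in the ball where strong convexity holds. Let $x,x',v,y$ satisfy the quantifiers of the definition, in particular $v=\nabla_x\Phi(x,y)$. Then
\begin{equation*}
g(x') \ge g(x)+\langle \nabla_x\Phi(x,y),x'-x\rangle - \tfrac r2\|x'-x\|^2 \ge g(x) + \Phi(x',y)-\Phi(x,y) + \tfrac{\delta-r}{2}\|x'-x\|^2,
\end{equation*}
and $\delta\ge r$ gives \eqref{eq:phi_prox_regularity}.

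\textbf{Main obstacle.} The only delicate point is matching the quantifiers: the prox-regularity neighborhood uses strict inequalities with a radius $\varepsilon_0$, while the $\Phi$-prox-regularity definition uses non-strict ones. This is handled by taking $\varepsilon$ strictly smaller than $\varepsilon_0$. The other requirement is that the strong convexity hold uniformly on a convex set containing both $x$ and $x'$, which is why everything is restricted to a small ball around $\bar x$ inside $\intr X$. If the hypothesis is read as strong convexity on all of $X$, this restriction is automatic. Otherwise one interprets "near $\bar y$" as uniform over a neighborhood $\|y-\bar y\|\le\varepsilon_1$, and this uniformity is exactly what is needed.
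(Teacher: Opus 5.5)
Your proposal is correct and is essentially the paper's proof: add the prox-regularity inequality with $v=\nabla_x\Phi(x,y)$ to the strong-convexity inequality for $-\Phi(\cdot,y)$, then drop the nonnegative term $\tfrac{\delta-r}{2}\|x'-x\|^2$. Your extra bookkeeping makes explicit what the paper leaves implicit: shrinking $\varepsilon$ below the prox-regularity radius, and keeping the ball inside $\intr X$ where strong convexity is used.
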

\begin{proof}
Let $x,x'$ near $\bar x$ and $v \in \partial g(x)$ near $\bar v$ and $y$ near $\bar y$ such that $v = \nabla_x \Phi(x,y)$.
By strong convexity of $-\Phi(\cdot, y)$ we have that
$$
-\Phi(x',y) \geq -\Phi(x,y) - \langle v, x' - x \rangle + \tfrac{\delta}{2}\|x'-x\|^2.
$$
Since $g$ is prox-regular at $\bar x$ for $\bar v$ we also have
$$
g(x') \geq g(x) +\langle v, x'-x \rangle - \tfrac{r}{2}\|x' - x\|^2.
$$
Summing the two inequalities we obtain for $\delta \geq r$:
$$
g(x') - \Phi(x',y) \geq g(x) - \Phi(x,y) + \tfrac{\delta -r}{2}\|x'-x\|^2 \geq g(x) - \Phi(x,y).
$$
Reordering we obtain
$$
g(x') \geq g(x) + \Phi(x',y)  - \Phi(x,y),
$$
which concludes the proof.
\end{proof}

Next we provide sufficient conditions for the local single-valuedness of $(\partial_\Phi g)^{-1}$ and the continuous differentiability of $g^\Phi$ generalizing \cite[Theorem 2.14]{laude2021lower}. We will utilize the concept of a \emph{graphical localization}, a localization not only in domain but also in range. We next provide its definition, adapted from \cite[p. 3]{dontchev2009implicit}.
\begin{definition}[Graphical localization]
    For $F:\bR^n \rightrightarrows \bR^m$ and a pair $(\bar x, \bar v) \in \gph F$, a graphical localization of $F$ at $\bar x$ for $\bar v$ is a set-valued mapping $T$ such that $\gph T = (U \times V)\cap \gph F$ for some neighborhoods $U \in \mathcal{N}(\bar x)$ and $V \in \mathcal{N}(\bar v)$, so that 
    \begin{align}
        T(x) := \begin{cases}
    F(x)\cap V &\text{ if } x \in U \\
    \emptyset &\text{ otherwise}.
        \end{cases}
    \end{align}
    The inverse of $T$ then has
    \begin{align}
        T^{-1}(v) := \begin{cases}
    F^{-1}(v)\cap U &\text{ if } v \in V \\
    \emptyset &\text{ otherwise}.
        \end{cases}
    \end{align}
    and is thus a graphical localization of the set-valued mapping $F^{-1}$ at $\bar v$ for $\bar x$. By a single-valued localization of $F$ at $\bar x$ will be meant a graphical localization that is a function, its domain not necessarily being a neighborhood of $\bar x$. The case where the domain is indeed a neighborhood of $\bar x$ will be indicated by referring to a single-valued localization of $F$ around $\bar x$ for $\bar v$ instead of just at $\bar x$ for $\bar v$. 
\end{definition}
An example of a set-valued mapping that does not have a single-valued localization around a point can be constructed as in \cite[p. 6]{dontchev2009implicit}: for $F(x) = x^2$, $F^{-1}$ is single-valued at $0$, empty-valued for $y < 0$ and two-valued for $y > 0$. Clearly thus $F^{-1}$ does not have a single-valued localization around $0$ for $0$. The following special case of a localization defines a localization of $\partial g$ in the $g$-attentive topology \cite[Equation 8(2)]{RoWe98}.
\begin{definition}[$g$-attentive $\varepsilon$-localization of limiting subdifferential]
        Let $g:X \to \exR$ be locally lsc at $\bar x$, a point where $g(\bar x)$ is finite. Let $\bar v \in \partial g(\bar x)$. Then for some $\varepsilon > 0$ the $g$-attentive $\varepsilon$-localization $T_\varepsilon$ of $\partial g(\bar x)$ at $\bar x$ for $\bar v$ is defined by 
    \begin{align} \label{eq:DCA_primal}
        T_\varepsilon(x) = \begin{cases}
    \{v \in \partial g(x) \mid \|v-\bar v\|<\varepsilon\} &\text{ if } \|x-\bar x\|<\varepsilon \text{ and } g(x) < g(\bar x) + \varepsilon \\
    \emptyset &\text{ otherwise}.
        \end{cases}
    \end{align}
\end{definition}
If in addition the function is subdifferentially continuous \cite[Definition 13.28]{RoWe98}, the $g$-attentive localization of the limiting subdifferential coincides with its graphical localization. We can now move on to our local differentiability result for the $\Phi$-conjugates.
\begin{proposition}[Local continuous differentiability of $\Phi$-conjugates] \label{thm:equivalence_phi_prox_reg}
    Let \cref{assum:g} hold true and $g$ be finite at $\bar x \in \intr X$ with $\{\bar x\} = (\partial_\Phi g)^{-1}(\bar y)$ for some $\bar y \in Y$. Let $\nabla_x \Phi(\bar x, \bar y)=:\bar v$. Then we have $\bar v \in \partial g(\bar x)$ and the following statements are equivalent:
   \begin{propenum}
       \item \label{thm:equivalence_phi_prox_reg:prox_reg} $g$ is $\Phi$-prox-regular at $\bar x$ for $(\bar v,\bar y)$ such that the $\Phi$-prox-regularity inequality \cref{eq:phi_prox_regularity} holds strictly for $x' \neq x$ with $x',x$ near $\bar x$ and $y$ near $\bar y$.
       \item \label{thm:equivalence_phi_prox_reg:monotonicity} for all $(x,y),(x',y')$ near $(\bar x, \bar y)$ with $x\neq x'$ such that $\nabla_x \Phi(x,y) \in T_\varepsilon(x)$ and $\nabla_x \Phi(x',y') \in T_\varepsilon(x')$ the following strict $\Phi$-monotonicity property holds true:
       \begin{align}
           0 > \Phi(x', y) - \Phi(x,y) + \Phi(x,y')-\Phi(x',y'),
       \end{align}
       where $T_\varepsilon$ is a $g$-attentive $\varepsilon$-localization of $\partial g$ at $\bar x$ for $\bar v$.
       \item \label{thm:equivalence_phi_prox_reg:single_valued} $(\partial_\Phi g)^{-1}$ is a single-valued map in a neighborhood of $\bar y$ such that for any $y$ in that neighborhood we have:
       \begin{equation}
           (\partial_\Phi g)^{-1}(y) = \{x \in X \mid 0 \in T_\varepsilon(x)-\nabla_x \Phi(x, y)\},
       \end{equation}
       where $T_\varepsilon$ is a $g$-attentive $\varepsilon$-localization of $\partial g$ at $\bar x$ for $\bar v$.
   \end{propenum} 
   Futhermore, under any of the above equivalent conditions, $g^\Phi$ is continuously differentiable in a neighborhood of $\bar y$ where 
   \begin{equation} \label{eq:env_grad}
       \nabla g^\Phi(y) = \nabla_y \Phi((\partial_\Phi g)^{-1}(y), y),
   \end{equation}
   holds for any $y$ near $\bar y$.
\end{proposition}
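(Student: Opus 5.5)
First, $\bar v\in\partial g(\bar x)$. Since $\bar x$ minimizes $g-\Phi(\cdot,\bar y)$, $\bar x\in\intr X$ by \cref{assum:g}, and $\Phi(\cdot,\bar y)$ is $\cC^1$ near $\bar x$, Fermat's rule and the smooth sum rule \cite[Theorem 10.1, Exercise 8.8]{RoWe98} give $0\in\partial g(\bar x)-\nabla_x\Phi(\bar x,\bar y)$. I would then prove the cycle (i)$\Rightarrow$(ii)$\Rightarrow$(iii)$\Rightarrow$(i). The differentiability claim would be derived at the end from (iii) by a Danskin-type argument.

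\textbf{(i)$\Rightarrow$(ii).} Take $\varepsilon$ from the prox-regularity definition, shrinking it if needed. For $(x,y)$ and $(x',y')$ as in (ii), $v=\nabla_x\Phi(x,y)\in T_\varepsilon(x)$ means $v\in\partial g(x)$ with $x,v$ close and $g(x)<g(\bar x)+\varepsilon$. The strict inequality \cref{eq:phi_prox_regularity} then gives $g(x')>g(x)+\Phi(x',y)-\Phi(x,y)$. Swapping the roles gives $g(x)>g(x')+\Phi(x,y')-\Phi(x',y')$. Adding the two and cancelling $g$ yields the strict $\Phi$-monotonicity inequality.

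\textbf{(ii)$\Rightarrow$(iii).} For $y$ near $\bar y$, \cref{thm:conj_cont} gives $(\partial_\Phi g)^{-1}(y)\neq\emptyset$. By \cref{thm:conj_cont:osc} and the uniqueness $\{\bar x\}=(\partial_\Phi g)^{-1}(\bar y)$, every $x\in(\partial_\Phi g)^{-1}(y)$ lies close to $\bar x$. Attentiveness also holds: $g(x)-\Phi(x,y)=-g^\Phi(y)\to -g^\Phi(\bar y)$ by continuity of $g^\Phi$ and $\Phi$, so $g(x)\to g(\bar x)$. By Fermat's rule, $v=\nabla_x\Phi(x,y)\in\partial g(x)$, and $v\to\bar v$ by continuity of $\nabla_x\Phi$. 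Hence $v\in T_\varepsilon(x)$, which shows $(\partial_\Phi g)^{-1}(y)$ is contained in the right-hand side of (iii) intersected with a neighborhood of $\bar x$.

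Single-valuedness comes from applying (ii) with $y=y'$. If $x\neq x'$ both satisfy $\nabla_x\Phi(\cdot,y)\in T_\varepsilon(\cdot)$, then (ii) gives $0>0$, a contradiction. So the localized solution set has at most one point, and it contains the nonempty set $(\partial_\Phi g)^{-1}(y)$; equality follows. I expect the main obstacle here: the set in (iii) must be read within the localization neighborhood of $\bar x$, and the bookkeeping of neighborhoods and of $\varepsilon$ needs care.

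\textbf{(iii)$\Rightarrow$(i).} Take $x,x'$ near $\bar x$, $y$ near $\bar y$, and $v=\nabla_x\Phi(x,y)\in T_\varepsilon(x)$. By (iii), $x$ is the unique element of $(\partial_\Phi g)^{-1}(y)$, i.e.\ the unique global minimizer of $g-\Phi(\cdot,y)$. Hence $g(x')-\Phi(x',y)>g(x)-\Phi(x,y)$ for every $x'\neq x$, which is exactly the strict inequality \cref{eq:phi_prox_regularity}. Local lsc at $\bar x$ follows from lsc of $g$.

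\textbf{Differentiability.} Set $p(y)$ to be the single element of $(\partial_\Phi g)^{-1}(y)$. By \cref{thm:conj_cont:osc}, $p$ is continuous near $\bar y$. Since $g^\Phi(y)=\sup_x\Phi(x,y)-g(x)$, for $y,y'$ near $\bar y$ we have the sandwich
\[
\Phi(p(y),y')-\Phi(p(y),y)\le g^\Phi(y')-g^\Phi(y)\le\Phi(p(y'),y')-\Phi(p(y'),y).
\]
Apply the mean value theorem in $y$ to both bounds. Because $\nabla_y\Phi$ is continuous near $(\bar x,\bar y)$ (note $p(y)\in\intr X$) and $p$ is continuous, both bounds equal $\langle\nabla_y\Phi(p(y),y),y'-y\rangle+o(\|y'-y\|)$. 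Hence $g^\Phi$ is differentiable with $\nabla g^\Phi(y)=\nabla_y\Phi(p(y),y)$, and this gradient is continuous as a composition of continuous maps, which gives \cref{eq:env_grad}.
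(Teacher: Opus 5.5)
Your proof is correct. The equivalence part follows the paper, and the $\mathcal C^1$ claim takes a different and more elementary route.

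\textbf{Equivalences.} Your treatment of $\bar v\in\partial g(\bar x)$ and of the cycle (i)$\Rightarrow$(ii)$\Rightarrow$(iii)$\Rightarrow$(i) matches the paper's argument step for step. You sum the two strict inequalities for (i)$\Rightarrow$(ii). For (ii)$\Rightarrow$(iii) you combine Fermat's rule with \cref{thm:conj_cont} and continuity of $g^\Phi$ to place minimizers in $\gph T_\varepsilon$, then apply (ii) with $y=y'$. For (iii)$\Rightarrow$(i) you use global strict minimality.

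\textbf{The $\mathcal C^1$ claim.} Here you genuinely differ. The paper proves that $g^\Phi$ is lower-$\mathcal C^1$ on all of $Y$:
\begin{itemize}
\item around each $y_0$ it builds a compact set $Z$ containing every minimizer for $y$ in a compact neighbourhood $V$;
\item it shows $g$ is continuous relative to $Z$ and writes $g^\Phi(y)=\max_{x\in Z}\Phi(x,y)-g(x)$;
\item it invokes the subsmoothness calculus \cite[Theorem 10.31]{RoWe98}, which gives $\partial g^\Phi(y)$ as the convex hull of $\nabla_y\Phi(x,y)$ over minimizers;
\item near $\bar y$ this hull collapses to a singleton, and the passage from a singleton subdifferential to continuous differentiability is left implicit.
\end{itemize}
Your sandwich inequality plus the mean value theorem needs none of this. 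It uses only two facts: continuity of the single-valued selection $p$ near $\bar y$, which follows from \cref{thm:conj_cont:osc}, and joint continuity of $\nabla_y\Phi$ on $\intr X\times Y$. It yields Fr\'echet differentiability and continuity of the gradient directly. It also produces the sign stated in \eqref{eq:env_grad}; the paper's convex-hull display carries a stray minus sign.

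\textbf{What the paper's route buys.} Its information does not depend on single-valuedness. It shows $g^\Phi$ is lower-$\mathcal C^1$ everywhere on $Y$, hence locally Lipschitz and subdifferentially regular. It also gives an explicit formula for $\partial g^\Phi$ even at points where the generalized prox is multivalued.
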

\begin{proof}
        Firstly, note that since $\bar x = \argmin_{x \in X} \{ g(x) - \Phi(x, \bar y) \}$ we have by the nonsmooth version of Fermat's theorem \cite[Theorem 10.1]{RoWe98}, the fact that $\Phi(\cdot,\bar y)$ is continuously differentiable around $\bar x \in \intr X$ and the subgradient sum rule \cite[Exercise 10.10]{RoWe98}, that $0 \in \partial g(\bar x) - \nabla_x \Phi(\bar x, \bar y)$ and hence $\nabla_x \Phi(\bar x, \bar y) = \bar v \in \partial g(\bar x)$.

    ``\labelcref{thm:equivalence_phi_prox_reg:prox_reg} $\Rightarrow$ \labelcref{thm:equivalence_phi_prox_reg:monotonicity}'': Let $\varepsilon>0$ and $T_\varepsilon$ be the $g$-attentive $\varepsilon$-localization of $\partial g$ at $\bar x$ for $\bar v$. Let $(x,y),(x',y')$ near $(\bar x, \bar y)$ such that $\nabla_x \Phi(x,y) \in T_\varepsilon(x)$ and $\nabla_x \Phi(x',y') \in T_\varepsilon(x')$. For $\varepsilon$ small enough, by assumption, the $\Phi$-prox-regularity inequality \cref{eq:phi_prox_regularity} holds strictly at $(x,y),(x',y')$, i.e.,
    \begin{equation}
        g(x') > g(x) +\Phi(x', y) -\Phi(x, y) \quad \text{and} \quad g(x) > g(x') +\Phi(x, y') -\Phi(x', y').
    \end{equation}
    Summing the inequalities we obtain
    \begin{align}
        0 > \Phi(x', y) -\Phi(x, y)+ \Phi(x, y') -\Phi(x', y'),
    \end{align}
    as claimed.

    ``\labelcref{thm:equivalence_phi_prox_reg:monotonicity} $\Rightarrow$ \labelcref{thm:equivalence_phi_prox_reg:single_valued}'': Take any $y^\nu \to \bar y$ contained in $Y$ and $x^\nu \in \argmin_{x \in X} \{ g(x) - \Phi(x,y^\nu) \}$. In light of \cref{thm:conj_cont:osc}, since $\{\bar x\} = (\partial_\Phi g)^{-1}(\bar y)$, up to extracting a subsequence, $x^\nu \to \bar x$. By Fermat's rule and since $\ran (\partial_\Phi g)^{-1} \subseteq \intr X$ this implies that
    \begin{equation} \label{eq:optimality_proof}
        0 \in \partial g(x^\nu) - \nabla_x \Phi(x^\nu, y^\nu), 
    \end{equation}
    i.e., there is $\nabla_x \Phi(x^\nu, y^\nu) =v^\nu \in \partial g(x^\nu)$ such that by continuity of $\nabla_x \Phi$, $v^\nu \to \bar v$. Furthermore, since $g(x^\nu) - \Phi(x^\nu, y^\nu) = \inf_{x \in X} \{ g(x) - \Phi(x,y^\nu) \} \to \inf_{x \in X} \{ g(x) - \Phi(x, \bar y) \} = g(\bar x) - \Phi(\bar x, \bar y)$ and $\Phi(x^\nu, y^\nu) \to \Phi(\bar x, \bar y)$ we also have that $g(x^\nu) \to g(\bar x)$. Thus, for $y$ sufficiently near $\bar y$, we can replace $\partial g$ in \cref{eq:optimality_proof} with $T_\varepsilon$ and obtain
    $$
    \emptyset \neq \argmin_{x \in X} \{g(x) - \Phi(x,y) \} \subseteq \{x \in X \mid \nabla_x \Phi(x,y) \in T_\varepsilon(x)\}.
    $$
    Suppose that $x,x' \in \{x \in X : \nabla_x \Phi(x,y) \in T_\varepsilon(x)\}$ with $x\neq x'$. By \labelcref{thm:equivalence_phi_prox_reg:monotonicity} we have that
    $$
    0 > \Phi(x', y) - \Phi(x,y) + \Phi(x, y) - \Phi(x',y) = 0,
    $$
    a contradiction. Hence $x=x'$ and consequently $y \mapsto \argmin_{x \in X} \{g(x) - \Phi(x,y) \}$ is single-valued.

    ``\labelcref{thm:equivalence_phi_prox_reg:single_valued} $\Rightarrow$ \labelcref{thm:equivalence_phi_prox_reg:prox_reg}'': Choose $(x,v) \in \gph T_\varepsilon$ such that $v = \nabla_x \Phi(x,y)$ with $y$ sufficiently close to $\bar y$. Then we have that $0 \in T_\varepsilon(x) - \nabla_x \Phi(x,y)$ and by \labelcref{thm:equivalence_phi_prox_reg:single_valued} $\{x\} =\argmin_{x' \in X} g(x') - \Phi(x',y)$. In particular this implies that for all $x' \neq x$:
    $$
    g(x') - \Phi(x',y) > g(x) - \Phi(x,y),
    $$
    and hence in particular \labelcref{thm:equivalence_phi_prox_reg:prox_reg} which holds even globally for all $x' \neq x$.

    Regarding the continuous differentiability of $g^\Phi$, we will first show that $g^\Phi$ is lower-$\mathcal{C}^1$ \cite[Definition 10.29]{RoWe98} and then utilize the subsmoothness calculus \cite[Theorem 10.31]{RoWe98} along with the single-valuedness of the generalized prox $(\partial_\Phi g)^{-1}$ in a neighborhood of $\bar y$.

    Let $y_0 \in Y$ and $V \subset Y$ a compact neighborhood of $y_0$. Let, moreover, $h(x,y) := \Phi(x,y)-g(x)$ and define the set $Z = \{x \in (\partial_\Phi g)^{-1}(y) \mid y \in V\}$. First, we show that $Z$ is closed. Assume the contrary and we have a sequence $\{x^\nu\}_{\nu \in \bN} \subset Z$ with $x^\nu \to x^\star \notin Z$. This implies the existence of a sequence $\{y^\nu\}_{\nu \in \bN}$, $y^\nu \in V$ for all $\nu \in \bN$. Since $\{y^\nu\}$ is bounded, as $V$ is bounded, there exists a convergent subsequence $y^{\nu_j} \to y^\star \in V$ since $V$ is also closed. Then, from \cref{thm:conj_cont:osc} we have that $x^{\nu_j} \to x^\star \in (\partial_\Phi g)^{-1}(y^\star) \subset Z$, a contradiction. Now assume that $Z$ is not bounded. Then, there exists a $\{x^\nu\}_{\nu \in \bN} \subset Z$ such that $\|x^\nu\| \to \infty$ and thus also a $\{y^\nu\}_{\nu \in \bN} \subset V$. Since $\{y^\nu\}_{\nu \in \bN}$ is bounded, there exists a convergent subsequence $y^{\nu_j} \to y^\star \in V$, which then implies that $\{x^{\nu_j}\}$ is bounded contradicting the fact that $\|x^\nu\| \to \infty $. Therefore, $Z$ is compact.

    By definition, $\argmax_{x \in Z}h(x,y) = (\partial_\Phi g)^{-1}(y)$ and $g^\Phi(y) = \max_{x \in Z}h(x, y)$. Note that since $\nabla_y h(x,y) = \nabla_y \Phi(x, y)$ depends continuously on $(x, y)$ we only have to show the continuity of $g$ on the set $Z$ in order to show the continuity of $h$ w.r.t. $(x,y)$ and thus obtain the claimed subsmoothness of $g^\Phi$.

    This will follow through a contradiction argument. Assume that $g$ is not continuous on $Z$. Then, there exists a sequence $\{x^\nu\}_{\nu \in \bN} \in Z$ that converges to some $x^\star \in Z$ such that $g(x^\nu) \nrightarrow g(x^\star)$. First, note that $g$ being closed implies that it is bounded from below in the compact set $Z$, i.e. $-\infty < \delta \leq g(x^\nu)$ for all $\nu \in \bN$. Moreover, for every $x^\nu \in Z$ there exists by definition a $y^\nu \in V$ such that $x^\nu \in (\partial_\Phi g)^{-1}(y^\nu)$ and $g^\Phi(y^\nu) = \Phi(x^\nu, y^\nu) - g(x^\nu)$. Since, by \cref{thm:conj_cont:conj_cont}, $g^\Phi$ is a continuous function, it is bounded in the compact set $V$ and the same holds for $\Phi$ in $Z \times V$. Therefore, $g(x^\nu)$ is also bounded from above, which further implies that the sequence $\{g(x^\nu)\}_{\nu \in \bN}$ is bounded and since $g(x^\nu) \nrightarrow g(x^\star)$, there exists at least one subsequence let's say indexed by $\nu_j$ such that $g(x^{\nu_j}) \to g^\star \neq g(x^\star)$. By extracting another subsequence if necessary, in light of \cref{thm:conj_cont:osc} we can assume that $y^{\nu_j} \to y^\star$ with $x^\star \in (\partial_\Phi g)^{-1}(y^\star)$. Now, since $g^\Phi$ is continuous we have that $g^\Phi(y^{\nu_j}) \to g^\Phi(y^\star)$ which as $\Phi$ is continuous implies that $g(x^{\nu_j}) \to g(x^\star)$, a contradiction.
    Hence, $g$ is continuous on $Z$. Therefore, for any $y_0 \in Y$ there exists a neighborhood $V$ such that $g^\Phi(y) = \max_{x \in Z}h(x,y)$ where $Z$ is a compact set and $h$ and $\nabla h_y$ depend continuously on $(x,y) \in Z \times V$. This means that $g^\Phi$ is lower-$\cC^1$.

    In light of \cite[Theorem 10.31]{RoWe98}, we have that 
    \begin{align*}
        \partial g^\Phi(y)
        &=
        \con\{-\nabla_y \Phi(x, y) \mid x \in (\partial_\Phi g)^{-1}(y)\}
        \\
        &=
        -\nabla_y \Phi((\partial_\Phi g)^{-1}(y), y),
    \end{align*}
    where the second equality follows by the single-valuedness of $(\partial_\Phi g)^{-1}$ for $y$ sufficiently close to $\bar y$. 
\end{proof}

\renewcommand{\arraystretch}{1.5}
\begin{table}[t]
\centering
\begin{tabularx}{\textwidth}{ |Y|c|c|Y|Y| }  
 \hline 
     Name & $X$ & $Y$ & $\Phi(x, y)$ & $\nabla g^\Phi(y)$ \\ 
  \hline \hline 
 Euclidean & $\bR^n$ & $\bR^n$ & $-\tfrac{1}{2\gamma}\|x-y\|^2$ 
 & $\tfrac{1}{\gamma}(P(y)-y)$\\ 
 \hline 
 left Bregman & $\dom h$ &$\intr \dom h$ & $-\tfrac{1}{\gamma}D_h(x,y)$ 
 & $\tfrac{1}{\gamma}\nabla^2 h(y)(P(y)-y)$ \\ 
 \hline 
 right Bregman &$\bR^n$ &$\bR^n$ & $-\tfrac{1}{\gamma}D_h(y,x)$   & $\tfrac{1}{\gamma}(\nabla h(P(y))-\nabla h(y))$\\ 
 \hline
 Anisotropic &$\bR^n$ &$\bR^n$ & $-\gamma \star \phi(x-y)$   & $\nabla \phi(\tfrac{1}{\gamma}(P(y)-y))$\\ 
 \hline
  Entropic &$\bR^n_{+}$ &$\bR^n_{++}$ & $-\gamma d_\varphi(x,y)$   & $\gamma \varphi^*(\varphi'(\tfrac{P_i(y)}{y_i}))$\\ 
 \hline 
\end{tabularx}
\caption{ \label{tab:envs}
    Examples of $\Phi$-conjugates (generalized envelopes) and their gradients as obtained from \cref{thm:equivalence_phi_prox_reg}. For ease of exposition, we denote by $P(y):= (\partial_\Phi g)^{-1}(y)$ the generalized prox. Note that the sign flip compared to the standard versions of the gradients is due to $g^\Phi(y) = -\inf_{x \in X} g(x)-\Phi(x,y)$. The first line corresponds to the (negative) standard (Euclidean) Moreau envelope \cite[Proposition 13.37]{RoWe98}. The second line corresponds to the (negative) left Bregman--Moreau envelope \cite{kan2012moreau,laude2020bregman,ahookhosh2021bregman,laude2021lower,laude2023dualities,wang2025bregman} and the third one to the (negative) right Bregman--Moreau envelope \cite{bauschke2018regularizing,laude2020bregman,laude2021lower,wang2025bregman}. In the fourth line the (negative) anisotropic Moreau envelope is presented \cite{moreau1966fonctionnelles,combettes2013moreau,laude2019optimization,laude2021lower,laude2023dualities,laude2025anisotropic}. The fifth line corresponds to the (negative) entropic envelopes from \cite{teboulle1992entropic,iusem1994entropy} and the form of the gradient to the $i$-th coordinate.
    }
\end{table}
When specifying the coupling functions $\Phi$, \cref{thm:equivalence_phi_prox_reg} retrieves the form of the gradients of standard envelope functions thus generalizing various existing results, but also leading to new ones. We present gradients of envelope functions from the related literature in \cref{tab:envs}.

\subsection{Lipschitz Differentiability and \texorpdfstring{$\mathcal{C}^2$}{C2} properties of \texorpdfstring{$\Phi$}{Phi}-conjugates} \label{subsec:lipschitz_diff_conj}
In the previous subsection we proved the single-valuedness of the proximal mapping which entails the continuous differentiability of the $\Phi$-conjugate under (a generalization of) prox-regularity. In this subsection our aim is to establish a much stronger property: The Lipschitz continuity of the proximal mapping which entails the Lipschitz differentiability and eventually the strict twice differentiability of the $\Phi$-envelope under stronger conditions. In particular the latter requires $\Phi$ to be twice differentiable and $g$ to be strictly twice epi-differentiable \cite[Section 6]{poliquin1996prox}.  Note that twice epi-differentiability and its strict counterpart have been extensively studied and are considered standard in the literature \cite{rockafellar1985maximal,rockafellar1989proto,poliquin1996prox, poliquin1996generalized, mohammadi2020twice,hang2025smoothness,hang2024chain, hang2024role,gfrerer2025strict}.

While Lipschitz continuity of the proximal mapping is easy to obtain for the Bregman case \cite[Corollary 2.43]{laude2021lower} the $\Phi$-convex case requires a more powerful technique:
Following \cite{laude2019optimization,laude2021lower} our key idea is to invoke the implicit function theorem for generalized equations \cite{dontchev2009implicit} originally due to Robinson \cite{robinson1991implicit} identifying the proximal mapping (locally) with its resolvent that we have introduced in \cref{thm:equivalence_phi_prox_reg:single_valued}. In the context of the implicit function theorem we interpret the resolvent in terms of (a graphical localization of) the solution mapping:
\begin{equation} \label{eq:solution_map}
    S: y \mapsto \{x \in \bR^n \mid 0 \in \partial g(x)-\nabla_x \Phi(x, y) \}.
\end{equation}
It is important to stress that in the full generality of our setting where neither $g$ or $-\Phi(\cdot,y)$ are considered convex, $S(y)$ can in general be a much larger set than $(\partial_\Phi g)^{-1}(y)$ at any point $y \in Y$. We next state the main result of this section that we prove in multiple steps afterwards.

\begin{theorem}[Lipschitz continuity and strict differentiability of the generalized prox] \label{thm:single_valued_prox} Let \cref{assum:g} hold and $g$ be prox-regular at $\bar x \in \intr X$ for $\bar v \in \partial g(\bar x)$ with constant $r$. Let $\bar y \in Y$ and assume that $(\partial_\Phi g)^{-1}(\bar y) = \{\bar x\}$. Let $\Phi$ be twice continuously differentiable such that $\lambda_{\max}(M) < r^{-1}$ for $M := -\nabla_{xx}^2 \Phi(\bar x, \bar y)^{-1}$.
Then the following statements are true:
    \begin{propenum}
        \item \label{thm:single_valued_prox:lipschitz} The mapping
        $$
        (\partial_\Phi g)^{-1} : y \mapsto \{x \in \bR^n \mid 0 \in T_\varepsilon(x)-\nabla_x \Phi(x, y) \}
        $$ is single-valued and Lipschitz continuous in a neighborhood of $\bar y$, where $T_\varepsilon$ is a $g$-attentive $\varepsilon$-localization of $\partial g$.
        
        \item \label{thm:single_valued_prox:diff} If, moreover, $g$ is strictly twice epi-differentiable at $\bar x$ for $\bar v$, then $(\partial_\Phi g)^{-1}$ is strictly differentiable at $\bar y$ with Jacobian 
        \begin{align}
        \nabla (\partial_\Phi g)^{-1}(\bar y)&= \nabla (T_\varepsilon +M^{-1})^{-1}\big(\nabla_x \Phi(\bar x, \bar y) +M^{-1} \bar x\big) \nabla_{xy}^2\Phi(\bar x, \bar y).
        \end{align}

        \item \label{thm:single_valued_prox:diff_prox_bound} 
        If, in addition, $g$ is prox-bounded and $\lambda_{\max}(M)$ is small enough we can further rewrite the Jacobian as
        \begin{equation}
            \nabla (\partial_\Phi g)^{-1}(\bar y)
            = M \nabla \prox{M}{g}(\bar x +M\nabla_x \Phi(\bar x, \bar y))\nabla_{xy}^2\Phi(\bar x, \bar y)
        \end{equation}
        where $\prox{M}{g}(x):= \argmin_{x\in \bR^n} \{\frac{1}{2}\langle x- y, M^{-1}(x-y) \rangle + g(x) \}$ is the scaled proximal mapping.
    \end{propenum}
\end{theorem}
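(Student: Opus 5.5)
The plan is to recast the generalized prox as the solution mapping \eqref{eq:solution_map} of a generalized equation with a strongly monotone, "resolvent-like" left-hand side, and then apply the implicit function theorem for generalized equations \cite{dontchev2009implicit}. I will assume that $M$ is symmetric positive definite; this is implicit in the hypothesis $\lambda_{\max}(M)<r^{-1}$ together with the later use of $M^{-1}$ as a metric. Add and subtract $M^{-1}x$ and write $0\in T_\varepsilon(x)-\nabla_x\Phi(x,y)$ equivalently as
\begin{equation*}
0\in \big(T_\varepsilon+M^{-1}\big)(x)-f(x,y),\qquad f(x,y):=\nabla_x\Phi(x,y)+M^{-1}x .
\end{equation*}
By the definition of $M$, the partial Jacobian $\nabla_x f(\bar x,\bar y)=\nabla^2_{xx}\Phi(\bar x,\bar y)+M^{-1}$ vanishes. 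Since $\Phi\in\cC^2$, $f$ is strictly differentiable at $(\bar x,\bar y)$ with $\nabla_y f(\bar x,\bar y)=\nabla^2_{xy}\Phi(\bar x,\bar y)$. The reference point is $\bar z:=f(\bar x,\bar y)=\bar v+M^{-1}\bar x$, where $\bar v\in\partial g(\bar x)$ by \cref{thm:equivalence_phi_prox_reg}.

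\textbf{Step 1 (part (i)).} First show that $\sigma:=(T_\varepsilon+M^{-1})^{-1}$ has a single-valued Lipschitz localization around $\bar z$ for $\bar x$.
\begin{itemize}[label={}]
\item Prox-regularity of $g$ at $\bar x$ for $\bar v$ with constant $r$ gives, for small $\varepsilon$, that $T_\varepsilon+rI$ is monotone (Poliquin--Rockafellar). It also gives a maximal monotone extension of $T_\varepsilon+rI$ (the resolvent argument of \cite{poliquin1996prox}).
\item Since $M^{-1}-rI\succeq(\lambda_{\max}(M)^{-1}-r)I\succ0$, the map $T_\varepsilon+M^{-1}$ is locally strongly monotone. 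Its inverse is therefore single-valued and Lipschitz, with constant $(\lambda_{\max}(M)^{-1}-r)^{-1}$, on a neighborhood of $\bar z$.
\end{itemize}
Next apply Robinson's theorem, in the form of \cite[Theorem 2B.7]{dontchev2009implicit} with a zero strict partial derivative in $x$. It yields a single-valued Lipschitz localization $s$ of $S$ around $\bar y$ for $\bar x$, satisfying $s(y)=\sigma(f(s(y),y))$.

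It remains to identify $s$ with $(\partial_\Phi g)^{-1}$ near $\bar y$. Take $y^\nu\to\bar y$ and $x^\nu\in(\partial_\Phi g)^{-1}(y^\nu)$. As in the proof of \cref{thm:equivalence_phi_prox_reg}, \cref{thm:conj_cont:osc} gives $x^\nu\to\bar x$, and then $g(x^\nu)\to g(\bar x)$ and $\nabla_x\Phi(x^\nu,y^\nu)\to\bar v$. So eventually $x^\nu$ solves the localized equation, and hence $x^\nu=s(y^\nu)$. Because $(\partial_\Phi g)^{-1}(y)$ is nonempty by \cref{thm:conj_cont:dom}, it equals $\{s(y)\}$ near $\bar y$, which proves (i).

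\textbf{Step 2 (part (ii)).} Strict twice epi-differentiability of $g$ at $\bar x$ for $\bar v$, combined with prox-regularity, is equivalent to strict differentiability at the reference point of the localized resolvent of $\partial g$ (Poliquin--Rockafellar \cite{poliquin1996generalized}). I will apply this to the shifted operator $T_\varepsilon+M^{-1}$, which is a linear change of the metric, to get that $\sigma$ is strictly differentiable at $\bar z$. The strict-differentiability part of the implicit function theorem \cite[Theorem 2B.5/2B.7]{dontchev2009implicit} then makes $s$ strictly differentiable at $\bar y$. Differentiating $s(y)=\sigma(f(s(y),y))$ and using $\nabla_x f(\bar x,\bar y)=0$ gives $\nabla s(\bar y)=\nabla\sigma(\bar z)\nabla_{xy}^2\Phi(\bar x,\bar y)$, which is the claimed formula.

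\textbf{Step 3 (part (iii)).} If $g$ is prox-bounded and $\lambda_{\max}(M)$ is small, then $\prox{M}{g}$ is single-valued near $\bar x+M\bar v$. Its optimality condition $0\in\partial g(x)+M^{-1}(x-w)$ with $w=Mz$ shows that, locally, $\sigma(z)=\prox{M}{g}(Mz)$; I will obtain this localization through the $g$-attentive argument of Step 1. The chain rule then gives $\nabla\sigma(\bar z)=\nabla\prox{M}{g}(M\bar z)\,M$, and $M\bar z=\bar x+M\nabla_x\Phi(\bar x,\bar y)$. To reach the stated ordering $M\nabla\prox{M}{g}$, I will use that in the metric $M^{-1}$ the Jacobian of the prox is $M$-self-adjoint, i.e.\ $\nabla\prox{M}{g}\,M=M(\nabla\prox{M}{g})^{\top}$, together with the symmetry of that Jacobian inherited from the generalized Hessian.

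\textbf{Main obstacle.} I expect the hardest step to be Step 2: checking that strict twice epi-differentiability transfers to strict differentiability of the \emph{anisotropically} shifted, $g$-attentively localized resolvent $(T_\varepsilon+M^{-1})^{-1}$, rather than the standard scalar one. I would first try a change of variables $x=M^{1/2}u$, which reduces the anisotropic case to the scalar Euclidean resolvent result for the function $g\circ M^{1/2}$.
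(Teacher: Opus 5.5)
The step that fails is the reordering at the end of Step 3. Your chain rule correctly gives $\nabla(\partial_\Phi g)^{-1}(\bar y)=P\,M\,\nabla^2_{xy}\Phi(\bar x,\bar y)$ with $P:=\nabla\operatorname{prox}^{M}_{g}(\bar x+M\bar v)$. Your first identity $PM=MP^\top$ is also true: it is just the symmetry of the Hessian $M^{-1}(I-P)$ of the $M^{-1}$-weighted envelope.

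Your second claim, that $P$ itself is symmetric, is false in general. The generalized Hessian makes $M^{-1}(I-P)$, equivalently $PM$, symmetric, not $P$. For $g=\tfrac12\langle\cdot,A\cdot\rangle$ one has $P=(I+MA)^{-1}$, which is symmetric only if $MA=AM$.

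Concretely, take $A=\begin{pmatrix}1&1\\1&1\end{pmatrix}$, $M=\mathrm{diag}(1,2)$ and $\Phi(x,y)=-\tfrac12\langle x-y,M^{-1}(x-y)\rangle$ on $\bR^2\times\bR^2$. All hypotheses of (i)--(iii) hold and $(\partial_\Phi g)^{-1}=\operatorname{prox}^{M}_{g}$. So the true Jacobian is $P=\begin{pmatrix}3/4&-1/4\\-1/2&1/2\end{pmatrix}$, whereas $MP\,\nabla^2_{xy}\Phi=MPM^{-1}=\begin{pmatrix}3/4&-1/8\\-1&1/2\end{pmatrix}$. Replacing $M$ by $tM$ does not remove the discrepancy, so ``$\lambda_{\max}(M)$ small enough'' does not help.

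Hence no argument can produce the ordering $M\nabla\operatorname{prox}^{M}_{g}$ with $\nabla$ the ordinary Jacobian, which is the convention in which (ii) is correct. The paper's own one-line proof of (iii) goes through the identity $(T_\varepsilon+M^{-1})^{-1}=\operatorname{prox}^{M}_{g}\circ M$ of \cref{thm:lipschitz_sing_val_loc}, and the chain rule applied to it gives exactly your $PM$. So drop the symmetry claim and state the result as $PM\,\nabla^2_{xy}\Phi=MP^\top\nabla^2_{xy}\Phi$. The printed ordering is right only if $\nabla\operatorname{prox}^{M}_{g}$ there is read as the transposed Jacobian (then $PM=MP^\top$ alone finishes), or when $M$ commutes with $P$, e.g.\ $M=\gamma I$.

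Apart from this, Steps 1 and 2 are sound and follow the paper's route.
\begin{itemize}
\item Your $\sigma=(T_\varepsilon+M^{-1})^{-1}$ near $\bar z$ is the paper's $\widetilde Q^{-1}$ translated by $\bar z$. Your identification of $s$ with $(\partial_\Phi g)^{-1}$ is the same outer-semicontinuity argument. The one difference is that the paper gets $\bar z\in\intr\dom\sigma$ directly by minimizing $g+\delta_{\overline B(\bar x,\varepsilon)}$ plus the quadratic model, rather than via a local maximal monotone extension.
\item Your normalization $\nabla_x f(\bar x,\bar y)=0$ is a real simplification. Strict differentiability of $s$ follows in a few lines from $s(y)=\sigma(f(s(y),y))$, the Lipschitz continuity of $s$, and strict differentiability of $\sigma$ at $\bar z$ and of $f$ at $(\bar x,\bar y)$. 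This replaces the strict first-order approximation of \cref{thm:strict_fo_approx}. Write it out rather than appeal to a strict-differentiability clause of the implicit function theorem.
\item The obstacle you anticipate in Step 2 is not one. As in \cref{thm:sigma_strict_diff}, strict proto-differentiability of $T_\varepsilon$ survives adding the linear map $M^{-1}$ and graph inversion, and a Lipschitz single-valued strictly proto-differentiable map is strictly differentiable. Your substitution $x=M^{1/2}u$ also works, since $g\circ M^{1/2}$ is prox-regular with constant $r\lambda_{\max}(M)<1$.
\end{itemize}
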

\cref{thm:single_valued_prox:lipschitz} was already proven in \cite[Proposition 2.9]{laude2021lower}. In the following we provide a full proof of the complete statement. To this end we consider a linearization of the optimality condition in $S$:
\begin{equation} \label{eq:linearized}
    Q(x) := H(x) + \partial g(x),
\end{equation}
where $H(x):=-\nabla_x \Phi(\bar x, \bar y) - \nabla_{xx}^2 \Phi(\bar x, \bar y) (x-\bar x)$ is a first-order Taylor expansion of $-\nabla_x \Phi(\cdot, \bar y)$ around $\bar x$. In order to prove \cref{thm:single_valued_prox} we first require some intermediate results. Our first result shows that indeed $Q^{-1}$ has a Lipschitz continuous single-valued localization around $0$ for $\bar x$, which then implies the same for our mapping of interest $S$ through \cite[Corollary 2B.10]{dontchev2009implicit}.
\begin{lemma} \label{thm:lipschitz_sing_val_loc}
    Under the assumptions of \cref{thm:single_valued_prox} consider the solution mapping $S$ as defined in \eqref{eq:solution_map}, a pair $(\bar y, \bar x) \in \gph S$ and $\bar v = \nabla_x \Phi(\bar x, \bar y) \in \partial g(\bar x)$. Then, the inverse mapping $Q^{-1}$ of $Q$ in \eqref{eq:linearized} has a Lipschitz continuous single-valued localization $\sigma$ around $0$ for $\bar x$ with 
        \[
        \sigma(u) =(T_\varepsilon+M^{-1})^{-1}(u + \bar v +M^{-1} \bar x),
        \]
    where $T_\varepsilon$ is a $g$-attentive $\varepsilon$-localization of $\partial g$ at $\bar x$ for $\bar v = -\nabla_x \Phi(\bar x, \bar y)$ and $M = -\nabla_{xx}^2 \Phi(\bar x, \bar y)^{-1}$. This implies the existence of a Lipchitz continuous, single-valued localization $s$ of $S$ around $\bar y$ for $\bar x$.
    If, furthermore, $g$ is prox-bounded and $ \lambda_{\max}(M)$ is small enough, we have the identity $(T_\varepsilon+M^{-1})^{-1} = \prox{M}{g} \circ M$. 
\end{lemma}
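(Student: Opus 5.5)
The plan is to reduce everything to the structure of $Q$ and then apply the Dontchev--Rockafellar implicit function theorem. Since $M^{-1} = -\nabla_{xx}^2\Phi(\bar x,\bar y)$ and $\bar v = \nabla_x\Phi(\bar x,\bar y)$, the linearization simplifies to
\[
Q(x) = -\bar v + M^{-1}(x-\bar x) + \partial g(x).
\]
Hence $u \in Q(x)$ is equivalent to $u+\bar v+M^{-1}\bar x \in (\partial g + M^{-1})(x)$. After localizing $\partial g$ to $T_\varepsilon$ (the $g$-attentive localization at $\bar x$ for $\bar v$), the candidate single-valued localization of $Q^{-1}$ around $0$ for $\bar x$ is exactly $\sigma(u) = (T_\varepsilon + M^{-1})^{-1}(u+\bar v+M^{-1}\bar x)$. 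It therefore suffices to show that $(T_\varepsilon+M^{-1})^{-1}$ is single-valued and Lipschitz on a neighborhood of $w_0:=\bar v+M^{-1}\bar x$, with value $\bar x$ at $w_0$. I interpret the hypothesis $\lambda_{\max}(M)<r^{-1}$ as requiring $M\succ 0$ (otherwise the condition is vacuous), so that $\lambda_{\min}(M^{-1}) = 1/\lambda_{\max}(M) > r$.

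\textbf{Step 1: single-valuedness and Lipschitz continuity.} Prox-regularity of $g$ at $\bar x$ for $\bar v$ with constant $r$ gives, for $\varepsilon$ small, the local hypomonotonicity of $T_\varepsilon$:
\[
\langle v-v',x-x'\rangle \ge -r\|x-x'\|^2 \quad \text{for all } (x,v),(x',v')\in\gph T_\varepsilon .
\]
This is obtained by adding the two prox-regularity inequalities. Adding $M^{-1}$ makes $T_\varepsilon+M^{-1}$ strongly monotone on its graph with modulus $\mu := \lambda_{\max}(M)^{-1}-r>0$. Strong monotonicity gives injectivity and the bound $\|x-x'\|\le \mu^{-1}\|w-w'\|$ on the range of the localized operator. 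Thus $(T_\varepsilon+M^{-1})^{-1}$ is single-valued and $\mu^{-1}$-Lipschitz wherever it is nonempty.

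\textbf{Step 2: nonempty values near $w_0$ (main obstacle).} I must show that the domain of $(T_\varepsilon+M^{-1})^{-1}$ contains a neighborhood of $w_0$. My first approach is to invoke the Poliquin--Rockafellar characterization: since $g$ is prox-regular and subdifferentially continuous in the $g$-attentive sense, $T_\varepsilon + rI$ is the localization of a maximal monotone operator \cite[Theorem 13.36]{RoWe98}. Adding the strongly monotone term $M^{-1}-rI$ yields a localization of a maximal strongly monotone operator, which is surjective by Minty-type arguments. The inverse then exists near $w_0$, and by graphical continuity (the inverse is Lipschitz and equals $\bar x$ at $w_0$) its values stay within the $\varepsilon$-window for $w$ near $w_0$.

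If this localization argument is delicate, my fallback is to add a local indicator to make $g$ prox-bounded and use the prox construction of Step 4 below. There, existence comes from minimizing a coercive function, and the attentive localization is preserved by continuity of the optimal value, as in \cref{thm:conj_cont}.

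\textbf{Step 3: transfer to $S$.} I apply \cite[Corollary 2B.10]{dontchev2009implicit} to the generalized equation $0\in f(y,x)+\partial g(x)$ with $f(y,x) := -\nabla_x\Phi(x,y)$. Because $\Phi\in\cC^2$, the residual $f(y,\cdot)-H$ has Lipschitz constant in $x$ that tends to $0$ near $(\bar y,\bar x)$, uniformly in $y$, and $f(\cdot,x)$ is Lipschitz in $y$. Together with Steps 1--2, the corollary then yields a Lipschitz single-valued localization $s$ of $S$ around $\bar y$ for $\bar x$.

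\textbf{Step 4: the identity with the scaled prox.} The optimality condition for $\prox{M}{g}(z)$ is $M^{-1}(z-x)\in\partial g(x)$, i.e.\ $x\in(\partial g+M^{-1})^{-1}(M^{-1}z)$. Setting $z=Mw$ gives $\prox{M}{g}(Mw)\subseteq(\partial g+M^{-1})^{-1}(w)$. When $g$ is prox-bounded and $\lambda_{\max}(M)$ is small, the objective is level-bounded, so the prox is nonempty and continuous (an analog of \cite[Theorem 1.17]{RoWe98}). Its values near $\bar x$ lie in $\gph T_\varepsilon$ by the attentive-continuity argument, and the metric analog of \cite[Proposition 13.37]{RoWe98} gives single-valuedness. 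Step 1 then forces $\prox{M}{g}(Mw) = (T_\varepsilon+M^{-1})^{-1}(w)$ locally.
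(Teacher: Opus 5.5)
Your outline follows the paper's proof. Step 1 is its hypomonotonicity-plus-$M^{-1}$ estimate. The $M\succ 0$ you need is used implicitly by the paper too, though it is an extra hypothesis rather than a consequence of $\lambda_{\max}(M)<r^{-1}$. Step 3 is the same appeal to \cite[Corollary 2B.10]{dontchev2009implicit}, and Step 4 is the prox-bounded variant via \cite[Proposition 8.46(f)]{RoWe98}.

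In Step 2, your primary route (local maximal monotonicity plus Minty) asks for more than the monotonicity characterization in \cite[Theorem 13.36]{RoWe98} provides. The local surjectivity of $T_\varepsilon+M^{-1}$ near $w_0$ is exactly what has to be proved. Your fallback is what the paper actually does: minimize $h(x,u)=g(x)+\delta_{\overline B(\bar x,\varepsilon)}(x)+\tfrac12\langle x-\bar x,M^{-1}(x-\bar x)\rangle-\langle\bar v+u,x\rangle$. You should spell out the estimate that drives it. With the indicator, the prox-regularity inequality at $(\bar x,\bar v)$ holds globally, so $h(x,0)\ge h(\bar x,0)+\tfrac12(\lambda_{\max}(M)^{-1}-r)\|x-\bar x\|^2$ and $\argmin h(\cdot,0)=\{\bar x\}$. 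This uniqueness forces minimizers for $u^\nu\to 0$ to converge to $\bar x$, so the normal cone of the ball drops out, and gives $g(x^\nu)\to g(\bar x)$ by \cite[Theorem 1.17]{RoWe98}. It needs only $\lambda_{\max}(M)<r^{-1}$, not ``small''.

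The genuine gap, which you share with the paper, is treating $\sigma=(H+T_\varepsilon)^{-1}$ as a \emph{graphical} localization of $Q^{-1}$. Corollary 2B.10 applied to $0\in-\nabla_x\Phi(x,y)+\partial g(x)$ requires a graphical localization, and because $T_\varepsilon$ is $g$-attentive, this fails without subdifferential continuity. Take the following data:
\begin{itemize}
\item $X=Y=\bR$ and $\Phi(x,y)=-\tfrac12(x-y)^2$;
\item $g=0$ on $(-\infty,0]$ and $g=1$ on $(0,\infty)$;
\item $\bar x=\bar y=\bar v=0$.
\end{itemize}
Then $g$ is lsc and prox-regular at $0$ for $0$, with $\partial g(0)=[0,\infty)$ and $\partial g(x)=\{0\}$ for $x\neq 0$. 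Also $(\partial_\Phi g)^{-1}(0)=\{0\}$ and $M=1$, so all hypotheses hold for any $r\in(0,1)$. Here $\nabla_x\Phi$ is affine, so $S=Q^{-1}$ with $Q(x)=x+\partial g(x)$, and $Q^{-1}(u)=\{0,u\}$ for every $u>0$. Hence neither $Q^{-1}$ nor $S$ has a single-valued graphical localization at $(0,0)$. By contrast, $(H+T_\varepsilon)^{-1}(u)=\min\{u,0\}$ for small $|u|$, because $T_\varepsilon$ discards all $x>0$ when $\varepsilon\le 1$.

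The repair is to apply Corollary 2B.10 to the localized equation $0\in-\nabla_x\Phi(x,y)+T_\varepsilon(x)$. Its linearization is $H+T_\varepsilon$, whose inverse is your $\sigma$. This yields a Lipschitz single-valued localization of $y\mapsto\{x\mid 0\in T_\varepsilon(x)-\nabla_x\Phi(x,y)\}$, which is all that \cref{thm:single_valued_prox:lipschitz} uses. Alternatively, assume subdifferential continuity of $g$ at $\bar x$ for $\bar v$; then $T_\varepsilon$ agrees locally with a graphical localization of $\partial g$.
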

\begin{proof}
    Let us start by considering the following mapping:
    \begin{equation} \label{eq:tilde_G}
        \widetilde{Q}(x) =H(x) + T_\varepsilon(x)= -\nabla_x \Phi(\bar x, \bar y) - \nabla_{xx}^2 \Phi(\bar x, \bar y) (x-\bar x) + T_\varepsilon(x),
    \end{equation}
    where $T_\varepsilon$ is a $g$-attentive $\varepsilon$-localization of $\partial g$ at $\bar x$ for $\bar v=\nabla_x \Phi(\bar x, \bar y)$ and $\varepsilon$ is the radius of the prox-regularity inequality. Our goal is to show that the inverse of $\sigma :=\widetilde{Q}^{-1}$ is a Lipschitz continuous single-valued localization of $Q^{-1}$ around $0$ for $\bar x$ which would then imply the existence of a single-valued localization of $S$ around $\bar y $ for $\bar x$ through \cite[Corollary 2B.10]{dontchev2009implicit}. We thus have to show first that the domain of $\sigma$ is actually a neighborhood of $0$. To that aim consider the function 
    \[
        h(x, u) := g(x) + \delta_{\overline{B}(\bar x, \varepsilon)}(x) - \tfrac{1}{2}\langle x - \bar x,\nabla_{xx}^2 \Phi(\bar x, \bar y)( x-\bar x) \rangle - \langle \nabla_x \Phi(\bar x, \bar y),x \rangle - \langle x, u \rangle,
    \]
    for $\varepsilon > 0$ such that the prox-regularity inequality holds in $x \in \overline{B}(\bar x, \varepsilon)$ and notice that it is level-bounded in $x$ locally uniformly in $u$. Moreover, since $\overline{B}(\bar x, \varepsilon)$ is closed, $\delta_{\overline{B}(\bar x, \varepsilon)}(x)$ is lsc and since $g$ is proper and lsc, their sum is also proper and lsc. Therefore, $h$ is also proper and lsc as the rest of the terms in its definition are continuous in $(x, u)$. Now, observe that $\bar v \in \partial_x h(\bar x, 0)$ and $h(\bar x,0) = g(\bar x) - \langle \nabla_x \Phi(\bar x, \bar y), \bar x \rangle$ and from prox-regularity of $g$ we have for all $x \in \overline{B}(\bar x, \varepsilon)$,
    \begin{equation*}
        g(x) \geq g(\bar x) + \langle \bar v,x-\bar x \rangle - \tfrac{r}{2}\|x-\bar x\|^2.
    \end{equation*}
    The above can be rewritten as
    \begin{equation} \label{eq:local_prox_reg}
        g(x) + \delta_{\overline{B}(\bar x, \varepsilon)}(x)\geq g(\bar x) + \langle \bar v,x-\bar x \rangle - \tfrac{r}{2}\|x-\bar x\|^2 \qquad \forall x \in X.
    \end{equation}
    Moreover, the function $p(x) := - \tfrac{1}{2}\langle x-\bar x ,\nabla_{xx}^2 \Phi(\bar x, \bar y) (x-\bar x) \rangle - \langle \nabla_x \Phi(\bar x, \bar y),x \rangle$ is strongly convex with parameter $\lambda_{\max}(M)^{-1}$, which implies that 
    $$
    p(x) \geq p(\bar x) + \langle \nabla p(\bar x),x-\bar x \rangle + \tfrac{\lambda_{\max}(M)^{-1}}{2}\|x-\bar x\|^2.
    $$
    Note that $\nabla p(\bar x) = - \nabla_x \Phi(\bar x, \bar y) = -\bar v$ by assumption and thus by summing the two inequalities we obtain:
    \begin{equation*}
        h(x, 0) \geq h(\bar x,0) + \tfrac{ \lambda_{\max}(M)^{-1}-r}{2}\|x-\bar x\|^2.
    \end{equation*} 
    This inequality implies that $h(x,0) > h(\bar x, 0)$ for all $x \neq \bar x$, which in turn implies that $\argmin_x h(x,0) = \{\bar x\}$.
    
    In light of \cite[Theorem 1.17]{RoWe98} we have that for $u^\nu \to 0$ such that $\inf_x h(x, u^\nu) < \infty$ there is, up to extracting a subsequence, $x^\nu \in \argmin_x h(x,u^\nu) \neq \emptyset $ with $x^\nu \to \bar x$ and $\inf_x h(x, u^\nu) \to \inf_x h(\bar x, 0)$.

    Now, since $x^\nu \to \bar x$ for $\nu$ large enough we can remove the normal cone of $\overline{B}_{(\bar x, \varepsilon)}$ from the optimality conditions for this minimization problem and get 
    $$u^\nu + \nabla_x \Phi(\bar x, \bar y) + \nabla_{xx}^2 \Phi(\bar x, \bar y) (x^\nu-\bar x) \in \partial g(x^\nu).
    $$ 
    Since the term on the l.h.s. is continuous in $x$ and $u^\nu \to 0$, we have that for $\nu$ large enough, $\|u^\nu + \nabla_x \Phi(\bar x, \bar y) + \nabla_{xx}^2 \Phi(\bar x, \bar y) (x^\nu-\bar x) - \bar v\| \leq \varepsilon$, for $\varepsilon$ as in the prox-regularity definition, meaning that $\|v^\nu - \bar v\| \leq \varepsilon$ for $v^\nu \in \partial g(x^\nu)$. Moreover, $\inf_x h(x, u^\nu) \to \inf_x h(x, 0)$ and since all the other terms in $h$ are continuous, $g(x^\nu) \to g(\bar x)$. Thus we can replace the subdifferential $\partial g$ of $g$ with the $g$-attentive $\varepsilon$-localization $T_\varepsilon$ and since the above holds for any $u^\nu \to 0$, we have for any $u$ in a sufficiently small neighborhood of $0$:
    \begin{equation*}
        u \in \widetilde{Q}(x),
    \end{equation*}
    for some $x$ near $\bar x$. Therefore, $\dom \sigma = \dom \widetilde{Q}^{-1}$ is a neighborhood of $0$.
    
    Next we show that $\widetilde Q$ is strongly monotone.
    To that aim, pick any $(x_1, v_1), (x_2, v_2) \in \gph T_\varepsilon$. From prox-regularity of $g$ it follows that
    \begin{align*}
        & g(x_1) \geq g(x_2) + \langle v_2,x_1-x_2 \rangle - \tfrac{r}{2}\|x_1-x_2\|^2 \\
        & g(x_2) \geq g(x_1) + \langle v_1,x_2-x_1 \rangle - \tfrac{r}{2}\|x_1-x_2\|^2.
    \end{align*}
    and summing the above inequalities we obtain
    \begin{equation}
        \langle v_1-v_2,x_1-x_2 \rangle \geq -r\|x_1-x_2\|^2.
    \end{equation}
    Adding and subtracting $-\langle \nabla_{xx}^2 \Phi(\bar x, \bar y)(x_1-x_2),x_1 - x_2 \rangle$ to the inequality above, we obtain
    \begin{align*}
        \langle u_1-u_2,x_1-x_2 \rangle 
        & \geq -r\|x_1-x_2\|^2 -\langle \nabla_{xx}^2 \Phi(\bar x, \bar y)(x_1-x_2),x_1 - x_2 \rangle 
        \\
        & \geq (\lambda_{\max}(M)^{-1}-r)\|x_1-x_2\|^2,
    \end{align*}
    for $u_1 \in \widetilde{Q}(x_1)$ and $u_2 \in \widetilde{Q}(x_2)$, where the second inequality follows by the bound on the eigenvalues of $\nabla_{xx}^2 \Phi(\bar x, \bar y)$. Therefore, $\widetilde{Q}$ is a $(\lambda_{\max}(M)^{-1}-r)$-strongly monotone localization of $Q$ at $\bar x$ for $0$.
    
    Since $\widetilde Q$ is strongly monotone, $\sigma = \widetilde{Q}^{-1}$ is Lipschitz continuous on $\ran \widetilde{Q} = \dom \sigma$ via \cite[Lemma 1.10]{laude2021lower}. Therefore, $\sigma$ is a Lipschitz continuous, single-valued localization of $Q^{-1}$ around $0$ for $\bar x$. The formula for $\sigma$ follows now directly by \eqref{eq:tilde_G}, noting that $u \in \widetilde Q(x) \Longleftrightarrow x \in \widetilde{Q}^{-1}(u)$.

    Now, consider the case where $g$ is prox-bounded with constant $\lambda_g$. Through \cite[Proposition 8.46(f)]{RoWe98}, we can find $\lambda \in (0,\lambda_g)$ such that $\lambda^{-1} > r$ and the inequality \eqref{eq:local_prox_reg} can be written globally, i.e.
    \begin{equation*}
        g(x) \geq g(\bar x) + \langle \bar v,x-\bar x \rangle - \tfrac{1}{2\lambda}\|x-\bar x\|^2 \qquad \forall x \in X.
    \end{equation*}
    Therefore, for $\lambda_{\max}(M)$ small enough such that $(\lambda_{\max}(M))^{-1} > \lambda^{-1}$ we can repeat the same analysis as above, removing the indicator of $\overline{B}(\bar x, \varepsilon)$. The form of the inverse mapping follows by the identification with $\argmin_x h(x,u)$.

    Finally, the existence of the Lipschitz continuous localization $s$ follows from \cite[Corollary 2B.10]{dontchev2009implicit} since $\Phi$ is twice continuously differentiable and thus $\nabla_x \Phi$ is strictly differentiable at $(\bar x, \bar y)$.
\end{proof}
So far we have shown the Lipschitz continuity of the localization $s$ of $(\partial_\Phi g)^{-1}$ in a neighborhood of $\bar y$. Therefore, in order to prove the first part of \cref{thm:single_valued_prox} we only have to identify $(\partial_\Phi g)^{-1}$ with its localization around $\bar y$. The Lipschitz continuity would then imply the a.e. differentiability of this mapping. However, we are interested in \emph{strict} differentiability at specific points which in general might not hold, even for globally Lipschitz functions as showcased in \cite{daniilidis2024all}. 

In order to prove the aforementioned strict differentiability we will rely on the approach of \cite{dontchev2009implicit} that builds upon the notion of first-order approximation.
\begin{definition}[First-order approximation] \label{def:fo_appr}
    Consider a function $F: \bR^n \to \bR^m$ and a point $\bar x \in \intr \dom F$. A function $H: \bR^n \to \bR^m$ is a first-order approximation to $F$ at $\bar x$ if $H(\bar x) = F(\bar x)$ and for every $\varepsilon > 0$ there exists $V \in \mathcal{N}(\bar x)$ such that 
    \begin{equation*}
        \|F(x) - H(x)\| \leq \varepsilon \|x-\bar x\| \qquad \forall x \in V.
    \end{equation*}
    It is a strict first-order approximation if 
    \begin{equation*}
        \|F(x)-H(x) - (F(x')-H(x'))\| \leq \varepsilon \|x-x'\| \qquad \forall x, x' \in V.
    \end{equation*}
\end{definition}
\begin{lemma} \label{thm:strict_fo_approx}
    In the setting of \cref{thm:lipschitz_sing_val_loc}, there exists a first-order approximation $\eta$ to $s$ at $\bar y$ given by
        \begin{equation}
            \eta(y) = \sigma (\nabla_{yx}^2 \Phi(\bar x,\bar y)(y-\bar y)).
        \end{equation}
    If $\sigma$ is strictly differentiable at $0$, then the approximation $\eta$ is strict.
\end{lemma}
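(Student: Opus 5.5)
The plan is to rewrite the generalized equation defining $s$ as a perturbation of the linearized inclusion $u\in Q(x)$. Then the exact solution $s(y)$ becomes $\sigma$ applied to a small residual, and we compare it with $\eta(y)$, which is $\sigma$ applied to the linear part of that residual. Throughout, write $D:=\nabla_{yx}^2\Phi(\bar x,\bar y)$ for the derivative of $y\mapsto\nabla_x\Phi(\bar x,y)$ at $\bar y$. Let $\kappa$ be a Lipschitz constant of $\sigma$ near $0$, and $L$ a Lipschitz constant of $s$ near $\bar y$; both are provided by \cref{thm:lipschitz_sing_val_loc}.

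\textbf{Step 1 (fixed-point representation).} Since $H(x)=-\nabla_x\Phi(\bar x,\bar y)-\nabla_{xx}^2\Phi(\bar x,\bar y)(x-\bar x)$, we have for every $x$
\[
0\in\partial g(x)-\nabla_x\Phi(x,y)\iff e(x,y)\in H(x)+\partial g(x)=Q(x),
\]
where $e(x,y):=\nabla_x\Phi(x,y)-\nabla_x\Phi(\bar x,\bar y)-\nabla_{xx}^2\Phi(\bar x,\bar y)(x-\bar x)$. Set $x=s(y)$ with $y$ near $\bar y$. Continuity of $s$ and of $\nabla_x\Phi$ makes $e(s(y),y)$ small and keeps $s(y)$ in the graphical localization. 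Hence $s(y)=\sigma(e(s(y),y))$, and by definition $\eta(y)=\sigma(D(y-\bar y))$. In particular $\eta(\bar y)=\sigma(0)=\bar x=s(\bar y)$. This identification of $s(y)$ with the localization $\sigma$, in the ranges used for $s$ and $\sigma$, is the delicate point. I would handle it by shrinking the neighborhoods so that the graph of $s$ lies inside the box defining $\sigma$.

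\textbf{Step 2 (first-order approximation).} Define the remainder $\rho(y):=e(s(y),y)-D(y-\bar y)$. Because $\Phi\in\mathcal C^2$, the map $\nabla_x\Phi$ is strictly differentiable at $(\bar x,\bar y)$. Therefore $\|\rho(y)\|\le \varepsilon(\|s(y)-\bar x\|+\|y-\bar y\|)\le \varepsilon(L+1)\|y-\bar y\|$ for $y$ near $\bar y$. The Lipschitz property of $\sigma$ then gives
\[
\|s(y)-\eta(y)\|\le\kappa\|\rho(y)\|\le\kappa\varepsilon(L+1)\|y-\bar y\|,
\]
which is the first-order approximation property of \cref{def:fo_appr}.

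\textbf{Step 3 (strictness).} Assume $\sigma$ is strictly differentiable at $0$ with Jacobian $A$. For $y,y'$ near $\bar y$, put $a=e(s(y),y)$, $b=D(y-\bar y)$, and let $a',b'$ be the analogous quantities at $y'$. All four points are close to $0$, so applying strict differentiability of $\sigma$ to the pairs $(a,b)$ and $(a',b')$ yields
\[
\|(s-\eta)(y)-(s-\eta)(y')\|\le\|A\|\,\|\rho(y)-\rho(y')\|+\varepsilon\big(\|a-b\|+\|a'-b'\|\big).
\]
The term $\|a-b\|+\|a'-b'\|$ is of order $\|y-\bar y\|+\|y'-\bar y\|$, not $\|y-y'\|$, so this estimate alone is too crude. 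The main obstacle is closing this gap. I would first try to avoid it by writing the difference as
\[
[\sigma(a)-\sigma(a')]-[\sigma(b)-\sigma(b')]=A\big((a-b)-(a'-b')\big)+o\big(\|a-a'\|+\|b-b'\|\big),
\]
which uses the pairs $(a,a')$ and $(b,b')$ instead. Here $\|b-b'\|\le\|D\|\|y-y'\|$ holds directly. For the other pair, $\|a-a'\|\le C(L+1)\|y-y'\|$ follows because $e$ is $\mathcal C^1$ and $s$ is Lipschitz.

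It remains to bound $\|\rho(y)-\rho(y')\|$. Strict differentiability of $\nabla_x\Phi$ at $(\bar x,\bar y)$, applied to the two points $(s(y),y)$ and $(s(y'),y')$, gives
\[
\|\rho(y)-\rho(y')\|\le\varepsilon\big(\|s(y)-s(y')\|+\|y-y'\|\big)\le\varepsilon(L+1)\|y-y'\|.
\]
Combining these bounds gives $\|(s-\eta)(y)-(s-\eta)(y')\|\le C'\varepsilon\|y-y'\|$. Since $\varepsilon>0$ is arbitrary, $\eta$ is a strict first-order approximation to $s$ at $\bar y$.
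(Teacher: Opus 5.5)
Your proposal is correct and follows essentially the paper's argument. For strictness, you use the same decomposition as the paper: write $s(y)=\sigma(e(s(y),y))$, apply strict differentiability of $\sigma$ separately to the pairs $(a,a')$ and $(b,b')$, and bound the remaining term $\nabla\sigma(0)(\rho(y)-\rho(y'))$ using strict differentiability of $\nabla_x\Phi$ at $(\bar x,\bar y)$ and the Lipschitz continuity of $s$. The only difference is that you prove the non-strict first-order approximation directly from the Lipschitz continuity of $\sigma$, whereas the paper cites Corollary 2B.10 of Dontchev--Rockafellar for that part.
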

\begin{proof}
    The fact that $\eta$ is a first-order approximation of $s$ follows immediately from \cite[Corollary 2B.10]{dontchev2009implicit}.

    Now assume moreover that $\sigma$ is strictly differentiable at $0$. We will work in a similar way as in the proof of \cite[Theorem 2B.9]{dontchev2009implicit}. Let $V \in \mathcal{N}(\bar y)$ be such that $s$ is single valued on $V$ and note that, by substitution, for $y \in V$ we have $s (y) = \sigma (-e(s(y), y))$ for $e(x, y) := -\nabla_x \Phi(x, y) + \nabla_x \Phi(\bar x, \bar y) + \nabla_{xx}^2 \Phi(\bar x, \bar y)(x - \bar x)$. Now take any $\varepsilon > 0$ and using the strict differentiability of $\nabla_x \Phi$ at $(\bar x, \bar y)$ we have for some $\overline V \in \mathcal{N}((s(\bar y),\bar y))$:
    \begin{align} \nonumber
        &\| \nabla_x \Phi(s_y,y)-\nabla_x \Phi(s_y',y') - \nabla_{xx}^2 \Phi(\bar x, \bar y)(s_y-s_y')
        - \nabla_{yx}^2 \Phi(\bar x,\bar y)(y-y')\|\\
        &\qquad\leq \varepsilon (\|s_y-s_y'\|+\|y-y'\|), \label{eq:eps_approx_2}
    \end{align}
    for all $(s_y,y), (s_y',y') \in \overline V$. Now, for all $y \in V$, from the Lipschitz continuity of $s$ with constant say $L_s$ we have that $\|s(y) - s(\bar y)\| \leq L_s\|y-\bar y\|$ or that $s(y)$ belongs to a ball around $s(\bar y)$ with radius that depends on $y$ and thus by potentially shrinking $V$ we can enforce all points $(s(y),y) \in \overline V$. Therefore, we can choose $V$ such that
    \begin{align} \nonumber
        &\| \nabla_x \Phi(s(y),y)-\nabla_x \Phi(s(y'),y') - \nabla_{xx}^2 \Phi(\bar x, \bar y)(s(y)-s(y'))
        - \nabla_{yx}^2 \Phi(\bar x,\bar y)(y-y')\|\\ 
        &\qquad\leq \varepsilon (\|s(y)-s(y')\|+\|y-y'\|), \label{eq:eps_approx}
    \end{align}
    for all $y, y' \in V$.
    Now note that the function $-e(s(y), y)$ is also locally Lipschitz as the composition of a locally Lipschitz (since $\Phi$ is assumed twice continuously differentiable) and a Lipschitz function, i.e. $\|e(s(y), y) - e(s(\bar y), \bar y))\| \leq \tilde L\|y-\bar y\|$ for any $y \in V$ and thus by making $V$ even smaller if needed we can use the strict differentiability of $\sigma$ at $0$ to get
    \begin{align} \nonumber
        &\|\sigma (-e(s(y), y)) - \sigma (-e(s(y'), y')) - \nabla \sigma(0)(-e(s(y), y) + e(s(y'), y'))\| \\ 
        &\qquad\leq \varepsilon \|e(s(y'), y') - e(s(y), y))\|
        \label{eq:bound_eps}
    \end{align}
    for all $y, y' \in V$. Similarly, 
    \begin{align}
        &\|\sigma (\nabla_{yx}^2 \Phi(\bar x,\bar y)(y'-\bar y)) - \sigma (\nabla_{yx}^2 \Phi(\bar x,\bar y)(y-\bar y)) - \nabla \sigma(0)\nabla_{yx}^2 \Phi(\bar x,\bar y)(y'-y)\| \leq \varepsilon \|y-y'\|.\label{eq:bound_sigma}
    \end{align}
    Now note that for any $y, y' \in V$, we have
    \begin{align} \nonumber
        &\|s(y)-s(y')+ \eta(y')-\eta(y)\| \\
        &\qquad=\|\sigma (-e(s(y), y)) - \sigma (-e(s(y'), y')) + \sigma (\nabla_{yx}^2 \Phi(\bar x,\bar y)(y'-\bar y)) -\sigma (\nabla_{yx}^2 \Phi(\bar x,\bar y)(y-\bar y))\|. \label{eq:norm_break}
    \end{align}
    In the following we are going to harness the strict differentiability of $\sigma$ to show that $\eta$ is a strict first-order approximation to $s$, i.e., that it satisfies \cref{def:fo_appr}. Note that 
    $$-e(s(y),y) + e(s(y'),y') = \nabla_x \Phi(s(y),y)-\nabla_x \Phi(s(y'),y') - \nabla_{xx}^2 \Phi(\bar x, \bar y)(s(y)-s(y'))
    $$
    and add and subtract $\nabla \sigma(0)(-e(s(y),y) + e(s(y'),y'))$ along with 
    $$\nabla \sigma(0)(\nabla_{yx}^2\Phi(\bar x, \bar y)(y'-y))$$ 
    to \eqref{eq:norm_break}. Using the triangle inequality we obtain the following estimate:
    \begin{align*}
        &\|s(y)-s(y')+ \eta(y')-\eta(y)\| 
        \\
        &\qquad\leq
        \|\sigma (-e(s(y), y)) - \sigma (-e(s(y'), y')) - \nabla \sigma(0)(-e(s(y), y) + e(s(y'), y'))\|
        \\
         &\qquad\quad+ \|\sigma (\nabla_{yx}^2 \Phi(\bar x,\bar y)(y'-\bar y)) - \sigma (\nabla_{yx}^2 \Phi(\bar x,\bar y)(y-\bar y)) - \nabla \sigma(0)\nabla_{yx}^2 \Phi(\bar x,\bar y)(y'-y)\|
        \\
         &\qquad\quad+ \Big\|\nabla \sigma(0)\big(\nabla_x \Phi(s(y),y)-\nabla_x \Phi(s(y'),y')  \\
         &\qquad\quad\quad- \nabla_{xx}^2 \Phi(\bar x, \bar y)(s(y)-s(y'))+ \nabla_{yx}^2 \Phi(\bar x,\bar y)(y'-y)\big)\Big\|.
    \end{align*}
    By using inequalities \eqref{eq:eps_approx}, \eqref{eq:bound_eps}, \eqref{eq:bound_sigma} along with the properties of the norm we obtain
    \begin{align*}
        &\|s(y)-s(y')+ \eta(y')-\eta(y)\| 
        \\
        &\qquad\leq \varepsilon \|\nabla_x \Phi(s(y),y)-\nabla_x \Phi(s(y'),y') - \nabla_{xx}^2 \Phi(\bar x, \bar y)(s(y)-s(y'))\|
        \\
        &\qquad\quad + \varepsilon \|\nabla_{yx}^2 \Phi(\bar x,\bar y)\| \|y-y'\| + \varepsilon \|\nabla \sigma(0)\| \|s(y)-s(y')\| +  \varepsilon \|\nabla \sigma(0)\| \|y-y'\|
    \end{align*}
    Further utilizing the Lipschitz continuity of $s$ on $V$ and of $\nabla_x \Phi$ as well as the fact that $\varepsilon$ is arbitrarily small we obtain that $\|s(y)-s(y')+ \eta(y')-\eta(y)\| \leq \tilde{\varepsilon} \|y-y'\|$ for arbitrary $\tilde{\varepsilon}$, which is the required result \cref{def:fo_appr}.
\end{proof}
In \cref{thm:strict_fo_approx} we showed that assuming $\sigma$ is strictly differentiable at $0$, there exists a strict first-order approximation to $s$ at $\bar y$. However, we have not shown under which conditions is $\sigma$ strictly differentiable. To that aim we require more regularity on $g$, namely the aforementioned twice epi-differentiability property and the closely related proto-differentiability of its subdifferential \cite{poliquin1996generalized}. Since these properties are geometric in the sense that they refer to the graphs of functions and mappings, we next present some standard tools and results from variational geometry.

In what follows, we use the definition and properties of set-limits as in \cite[Chapter 4]{RoWe98}. The \emph{tangent cone} at a point $\bar x$ of a set $C \subset \bR^n$ is given by 
\begin{equation} \label{eq:tangent_cone}
    T_{C}(\bar x) = \limsup_{t \searrow 0}\frac{C - \bar x}{t}
\end{equation}
and is composed of the vectors $w \in \bR^n$ such that there exists a sequence $x^\nu \xrightarrow[]{C} \bar x$ and $t^\nu \searrow 0$ with $(x^\nu - \bar x) / t^\nu \to w$, the notation $x^\nu \xrightarrow[]{C} \bar x$ meaning that $x^\nu \to \bar x$ with $x^\nu \in C$. Following \cite[Proposition 6.2]{RoWe98} we say that $C$ is \emph{geometrically derivable} at $\bar x$ if the $\limsup$ in \eqref{eq:tangent_cone} is actually a full limit, i.e., 
\begin{equation*}
    \limsup_{t \searrow 0}\frac{C - \bar x}{t} = \liminf_{t \searrow 0}\frac{C - \bar x}{t}.
\end{equation*}
The quantity on the r.h.s. of the equation above is known in the related literature as the \emph{derivative cone} of $C$ at $\bar x$ \cite{rockafellar1989proto} or the intermediate cone \cite{frankowska1985adjoint}.

The \emph{regular (Clarke) tangent cone} \cite[Definition 6.25]{RoWe98} and the \emph{(Boulingard) paratingent cone} to a set $C$ at $\bar x$ are defined, respectively,  as
\begin{equation}
    \widehat{T}_C(\bar x) = \liminf_{x \xrightarrow[]{C} \bar x, t \searrow 0} \frac{C -x}{t} \quad\text{ and }\quad \widetilde{T}_C(\bar x) = \limsup_{x \xrightarrow[]{C} \bar x, t \searrow 0} \frac{C -x}{t}.
\end{equation}
It is straightforward that
\begin{equation} \label{eq:cone_inclusion}
     \widehat{T}_C(\bar x) \subseteq T_{C}(\bar x) \subseteq \widetilde{T}_C(\bar x),
\end{equation}
while the inverse inclusions are related to notions of generalized differentiability. Following \cite[Corollary 6.29]{RoWe98} we say that a locally closed set $C \subset \bR^n$ is \emph{(Clarke) regular} at $\bar x \in C$ if $T_C(\bar x) = \widehat{T}_C(\bar x)$, i.e. if all tangent vectors at $\bar x$ are regular. Due to the inclusions above, the required condition then basically boils down to $\widehat{T}_C(\bar x) \supseteq T_{C}(\bar x)$. When $C$ is the graph of some set-valued mapping $T:\bR^n \rightrightarrows \bR^m$ we say that $T$ is \emph{graphically regular} \cite[Definition 8.38]{RoWe98} at $\bar x$ for $\bar u$ if $\gph T$ is Clarke regular at $(\bar x,\bar u)$. Clearly, this is equivalent to $T^{-1}$ being graphically regular at $\bar u$ for $\bar x$. In fact, for single-valued mappings, graphical regularity along with strict continuity is equivalent to strict differentiability as shown in the following result that we adapt from \cite[Exercise 9.25]{RoWe98}:
\begin{fact}\label{thm:clarke_reg_strict}
    Consider $T:D \to \bR^m$, $D \subset \bR^n$ and $\bar x \in \intr D$. $T$ is strictly differentiable at $\bar x$ if and only if $T$ is strictly continuous at $\bar x$ and graphically regular at $\bar x$.
\end{fact}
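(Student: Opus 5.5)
The plan is to prove the two implications separately. The forward direction is a direct difference-quotient computation. For the converse I will pass through normal cones and the coderivative, along the lines of \cite[Exercise 9.25]{RoWe98}. Throughout, $C:=\gph T$, which is locally closed near $(\bar x,T(\bar x))$ because $T$ is continuous near $\bar x\in\intr D$ in either direction.

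\emph{Strict differentiability $\Rightarrow$ strict continuity and graphical regularity.} Let $A$ denote the strict derivative of $T$ at $\bar x$.
\begin{enumerate}[label=(\alph*)]
\item Fixing $\varepsilon=1$ in the definition gives $\|T(x)-T(x')\|\le(\|A\|+1)\|x-x'\|$ near $\bar x$, which is strict continuity.
\item Since $T$ is differentiable at $\bar x$, the tangent cone is $T_C(\bar x,T(\bar x))=\gph A$. Indeed, any difference quotient $(x^\nu-\bar x,\,T(x^\nu)-T(\bar x))/t^\nu$ that converges to $(w,z)$ must satisfy $z=Aw$.
\item For the reverse inclusion, take $(w,Aw)$, points $x^\nu\to\bar x$ and $t^\nu\searrow 0$. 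The points $(x^\nu+t^\nu w,\,T(x^\nu+t^\nu w))\in C$ give the quotients
\[
\bigl(w,\;(T(x^\nu+t^\nu w)-T(x^\nu))/t^\nu\bigr)\to(w,Aw),
\]
where the convergence uses the strict estimate with $x=x^\nu+t^\nu w$ and $x'=x^\nu$.
\item Hence $\gph A\subseteq\widehat T_C$. Combined with (b) and \eqref{eq:cone_inclusion}, this gives $\widehat T_C=T_C$, i.e.\ Clarke regularity.
\end{enumerate}

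\emph{Converse, first step: the tangent cone is the graph of a linear map.} Let $L$ be a local Lipschitz constant of $T$ at $\bar x$.
\begin{enumerate}[label=(\alph*)]
\item Every $w$ admits some $z$ with $(w,z)\in T_C$, obtained by taking quotients along $\bar x+tw$; boundedness of these quotients follows from $L$. Moreover, $(w,z)\in T_C$ forces $\|z\|\le L\|w\|$.
\item By regularity, $T_C=\widehat T_C$, and the latter is a closed convex cone. A convex cone whose projection onto the first factor is all of $\bR^n$ and whose fiber over $w=0$ is $\{0\}$ (by the bound in (a)) is the graph of a linear map $A$.
\item For the normal cones: regularity gives $N_C=\widehat N_C=(T_C)^\circ=(\gph A)^\circ=\{(A^\top y,-y)\mid y\in\bR^m\}$, using the polarity $\widehat N_C=T_C^\circ$ \cite[Theorem 6.28]{RoWe98}. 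Therefore the coderivative is single-valued and linear: $D^*T(\bar x)(y)=\{A^\top y\}$.
\end{enumerate}

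\emph{Converse, second step: from the coderivative to strict differentiability.} A strictly continuous map whose coderivative at $\bar x$ is single-valued, namely $D^*T(\bar x)=A^\top$, is strictly differentiable there with derivative $A$ \cite[Theorem 9.62 / Exercise 9.25]{RoWe98}. If one prefers an argument from scratch, I would argue by contradiction.
\begin{enumerate}[label=(\alph*)]
\item Suppose there are $x^\nu\ne x'^\nu\to\bar x$ with $\|T(x'^\nu)-T(x^\nu)-A(x'^\nu-x^\nu)\|>\varepsilon\|x'^\nu-x^\nu\|$.
\item Normalize by $t^\nu=\|x'^\nu-x^\nu\|$ and extract convergent subsequences using the bound $L$. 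This yields a paratingent vector $(w,z)\in\widetilde T_C$ with $\|w\|=1$ and $\|z-Aw\|\ge\varepsilon$.
\item Then use the limiting description $N_C=\limsup \widehat N_C$. The difference quotients produce regular normals at nearby graph points that are ``far'' from $\gph A^\top$ in the limit, contradicting $N_C=\{(A^\top y,-y)\}$.
\end{enumerate}

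This last step, excluding paratingent directions off $\gph A$, is where I expect the main difficulty. The plan is to rely on the cited coderivative characterization rather than rederive the Mordukhovich criterion.
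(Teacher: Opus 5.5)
Your proof is correct, and it is more explicit than the paper, which gives no argument and simply adapts \cite[Exercise 9.25]{RoWe98}.

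The forward direction is a complete, elementary difference-quotient computation. The first step of your converse exposes the geometric mechanism well. Regularity makes $T_C=\widehat T_C$ a convex cone. The Lipschitz bound (full projection onto $\bR^n$, trivial fiber over $w=0$) then forces $T_C=\gph A$ with $A$ linear. Hence $N_C=\widehat N_C=(\gph A)^\circ$ and $D^*T(\bar x)(y)=\{A^\top y\}$.

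What remains is upgrading this pointwise information at $\bar x$ to the uniform two-point estimate near $\bar x$. That is the only nontrivial analytic step. You outsource it to the very exercise the Fact is adapted from, so on that step you are no more self-contained than the paper. Your ``from scratch'' sketch does not fill it either: a paratingent vector off $\gph A$ does not by itself yield nearby regular normals without some mean value argument.

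A clean, non-circular way to finish is by scalarization.
\begin{enumerate}
\item For strictly continuous $T$ one has $D^*T(\bar x)(y)=\partial\langle y,T\rangle(\bar x)$ \cite[Proposition 9.24]{RoWe98}.
\item So the strictly continuous scalar function $\langle y,T\rangle$ has the singleton subdifferential $\{A^\top y\}$ at $\bar x$.
\item By the subgradient characterization of strict differentiability \cite[Theorem 9.18]{RoWe98}, $\langle y,T\rangle$ is strictly differentiable at $\bar x$.
\item Taking $y=e_1,\dots,e_m$ gives strict differentiability of $T$ with Jacobian $A$.
\end{enumerate}

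If you want it fully from scratch, apply Lebourg's mean value theorem to $\langle y,T\rangle$ and combine it with outer semicontinuity and local boundedness of $\partial\langle y,T\rangle$ near $\bar x$. That is the result you need here, not the Mordukhovich criterion, which concerns the Aubin property.
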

We now move on to the notion of proto-differentiability and its strict counterpart that will eventually lead to the strict differentiability of the generalized proximal mapping. Following \cite[Proposition 8.41]{RoWe98} we say that \emph{proto-differentiability} of $T$ at $\bar x$ for $\bar u$ is equivalent to the graph of $T$ being geometrically derivable at $(\bar x, \bar u)$, i.e. when the tangent and the derivative cones coincide, while \emph{strict proto-differentiability} is equivalent to $\gph T$ being strictly smooth at $(\bar x, \bar u)$ \cite{hang2025smoothness}, i.e. $\widehat{T}_{\gph T}(\bar x, \bar u) = \widetilde{T}_{\gph T}(\bar x, \bar u)$. We remark that in \cite{rockafellar1989proto} strict proto-differentiability was originally defined as equality of the \emph{tangent} cone (instead of the \emph{paratingent cone}) with the Clarke tangent cone, but we use the more widely accepted definition provided above. Since we will utilize this notion for mappings that are locally Lipschitzian, the weaker requirement presented in \cref{thm:clarke_reg_strict} will be enough. Further discussion on these notions can be found in \cite[Lemma 2.9]{gfrerer2025strict}.

Next we show that indeed when $g$ is strictly twice epi-differentiable, $\sigma$ from \cref{thm:lipschitz_sing_val_loc} is strictly differentiable, implying the hypothesis of \cref{thm:strict_fo_approx}.

\begin{lemma} \label{thm:sigma_strict_diff}
    Under the assumptions of \cref{thm:lipschitz_sing_val_loc}, if $g$ is strictly twice epi-differentiable at $\bar x$ for $\bar v$, then $\sigma$ from the same lemma is strictly differentiable at $0$.
\end{lemma}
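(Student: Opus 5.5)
By \cref{thm:clarke_reg_strict}, it suffices to show that $\sigma$ is strictly continuous at $0$ and graphically regular at $0$ for $\bar x$. Strict continuity is already available: \cref{thm:lipschitz_sing_val_loc} shows that $\sigma$ is a Lipschitz continuous single-valued localization defined on a neighborhood of $0$. What remains is graphical regularity, i.e.\ Clarke regularity of $\gph \sigma$ at $(0,\bar x)$. Since $\gph\sigma$ is (a localization of) the inverse graph of $\widetilde Q = H + T_\varepsilon$, I will work with $\gph \widetilde Q$ at $(\bar x,0)$; graphical regularity is invariant under taking inverses.

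First I transfer the hypothesis on $g$ to $T_\varepsilon$. By \cite{poliquin1996generalized} (see also \cite[Theorem 13.40]{RoWe98} and its strict analogue in \cite{poliquin1996prox}), for $g$ prox-regular and subdifferentially continuous in the $g$-attentive sense, strict twice epi-differentiability of $g$ at $\bar x$ for $\bar v$ is equivalent to strict proto-differentiability of $\partial g$ at $\bar x$ for $\bar v$. In the $g$-attentive localization this reads
\[
\widehat T_{\gph T_\varepsilon}(\bar x,\bar v)=\widetilde T_{\gph T_\varepsilon}(\bar x,\bar v).
\]
The tangent cones only see $\gph T_\varepsilon$ near $(\bar x,\bar v)$ along $g$-attentive sequences, and $g$-attentiveness is automatic here by prox-regularity. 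Combined with \eqref{eq:cone_inclusion}, this gives Clarke regularity of $\gph T_\varepsilon$ at $(\bar x,\bar v)$, in fact strict smoothness.

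Next I pass to $\widetilde Q$. The map $(x,v)\mapsto (x, v + H(x))$ is an affine bijection, because $H$ is affine with linear part $-\nabla^2_{xx}\Phi(\bar x,\bar y)$. It sends $\gph T_\varepsilon$ onto $\gph \widetilde Q$ and $(\bar x,\bar v)$ onto $(\bar x,0)$, using $H(\bar x)=-\bar v$. Linear isomorphisms map regular, tangent and paratingent cones onto one another, so $\gph\widetilde Q$ is strictly smooth, hence Clarke regular, at $(\bar x,0)$. Swapping coordinates then gives the same for $\gph \widetilde Q^{-1}=\gph\sigma$ at $(0,\bar x)$. Since $\sigma$ agrees with this graph locally, $\sigma$ is graphically regular at $0$, and \cref{thm:clarke_reg_strict} concludes the argument.

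The main obstacle is the first step, namely citing the correct equivalence between strict twice epi-differentiability of $g$ and strict proto-differentiability of $\partial g$ localized in the $g$-attentive sense. I also need to check that the cone definitions used there match the ones adopted here: paratingent versus contingent cone. If the cited result only gives equality of the Clarke tangent cone with the contingent cone, that is still exactly Clarke regularity, which is all \cref{thm:clarke_reg_strict} needs. So either convention suffices.
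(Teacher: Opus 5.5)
Your proposal is correct and follows essentially the same route as the paper. Poliquin--Rockafellar gives strict proto-differentiability of $T_\varepsilon$ at $\bar x$ for $\bar v$; this transfers to $\widetilde Q$ and then to $\sigma=\widetilde Q^{-1}$; the cone inclusions \eqref{eq:cone_inclusion} yield graphical regularity; and \cref{thm:clarke_reg_strict}, together with the Lipschitz continuity of $\sigma$ on a neighborhood of $0$, finishes the argument. The only difference is cosmetic: you justify the passage to $\widetilde Q$ and to its inverse directly, via the affine graph isomorphism $(x,v)\mapsto(x,v+H(x))$ (valid since $H$ is affine with $H(\bar x)=-\bar v$) and a coordinate swap, whereas the paper cites a sum rule for strict proto-differentiability and a graphical-inverse result.
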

\begin{proof}
    We will adapt the arguments of the proof of \cite[Theorem 4.2]{hang2025smoothness}. First, since $g$ is prox-regular at $\bar x$ for $\bar v$ and strictly twice epi-differentiable at $\bar x$ for $\bar v$, from \cite[Theorem 2.3]{poliquin1996generalized} the localization $T_\varepsilon$ is strictly proto-differentiable at $\bar x$ for $\bar v$.  Thus in light of \cite[Proposition 5.3]{hang2024role}, $\widetilde Q$ is strictly proto-differentiable at $\bar x$ for $\bar v - \nabla_x \Phi(\bar x, \bar y) = 0$. Moreover, strict proto-differentiability is a graphical property and as such $\sigma = \widetilde Q^{-1}$ is also strictly proto-differentiable at $0$ for $\bar x$ \cite[Proposition 5.1]{rockafellar1989proto}. 
    
    Now, note that strict proto-differentiability at $0$ for $\bar x$ implies $\widehat{T}_{\gph \sigma}(0, \bar x) = \widetilde T_{\gph \sigma}(0, \bar x)$. In light of the inclusion relations of the tangent cones \eqref{eq:cone_inclusion}, this further implies that $\widehat{T}_{\gph \sigma}(0, \bar x) = T_{\gph \sigma}(0, \bar x)$, i.e., that $\gph \sigma$ is Clarke regular at $(0, \bar x)$ which is another way of saying that $\sigma$ is graphically regular at $0$ \cite[Definition 8.38]{RoWe98}. Therefore, we have that $\sigma$ with $0 \in \intr \dom \sigma$ is Lipschitz continuous and graphically regular at $0$ and as such we can use \cref{thm:clarke_reg_strict} in order to get that $\sigma$ is strictly differentiable at $0$.
\end{proof}
We are now ready to show our key result applying the implicit function theorem of \cite{dontchev2009implicit} for generalized equations to obtain that the solution mapping $S$ has a Lipschitz continuous single-valued localization $s$ around $\bar y$ for $\bar x$. This localization is shown to coincide with the (generalized) proximal mapping $(\partial_\Phi g)^{-1}$.
If moreover, $g$ is strictly twice epi-differentiable, this localization happens to be strictly differentiable at $\bar y$.
\begin{proof}[Proof of \cref{thm:single_valued_prox}]
    ``\labelcref{thm:single_valued_prox:lipschitz}'': In light of \cref{thm:lipschitz_sing_val_loc}, $S$ has a Lipschitz continuous single-valued localization $s$ around $\bar y$ for $\bar x$. The next step is to show that $\argmin_x g(x) - \Phi(x, \bar y) \subseteq s(y)$ and that $s = \{x \in \bR^n: 0 \in T_\varepsilon(x) - \nabla_x \Phi(x, \bar y)\}$, where $T_\varepsilon$ is an $g$-attentive localization of $\partial g$ at $\bar x$ for $\bar v$.
    In light of \cref{thm:conj_cont} and since $\bar x \in (\partial_\Phi g)^{-1}(\bar y)$ is unique, we have that for any sequence $y^\nu \to \bar y$, there is $x^\nu \in (\partial_\Phi g)^{-1}(y^\nu)$ such that $x^\nu \to \bar x$ and $g^\Phi(y^\nu) \to g^\Phi(\bar y)$.

    By the modern version of Fermat's rule for the inner minimization problem in $(\partial_\Phi g)^{-1}(y^\nu)$, we have that $\nabla_x \Phi(x^\nu,y^\nu) \in \partial g(x^\nu)$ and since $\nabla_x \Phi$ is continuous on $X\times Y$, we have that $\|\nabla_x \Phi(x^\nu,y^\nu) - \nabla_x \Phi(\bar x,\bar y)\| \leq \varepsilon$, i.e. $\|v^\nu - \bar v\| \leq \varepsilon$ for $v^\nu \in \partial g(x^\nu)$, where $\varepsilon$ as in the definition of prox-regularity of $g$. Moreover, as $g^\Phi(y^\nu) = \Phi(x^\nu, y^\nu)-g(x^\nu) \to g^\Phi(\bar y) = \Phi(\bar x, \bar y)-g(\bar x)$ we have that $g(x^\nu) \to g(\bar x)$. Therefore, we can use the $g$-attentive localization and write 
    \begin{equation*}
        0 \in T_\varepsilon(x^\nu) - \nabla_x \Phi(x^\nu,y^\nu),
    \end{equation*}
    This implies that
    \begin{equation*}
        0 \neq \argmin_x g(x) - \Phi(x,y) \subseteq \{x \in \bR^n \mid 0 \in T_\varepsilon(x) - \nabla_x \Phi(x,y)\},
    \end{equation*}
    where we have replaced the subdifferential $\partial g$ with its $g$-attentive $\varepsilon$-localization $T_\varepsilon$. Now, for $y$ sufficiently near $\bar y$, since $s$ is a localization of $S$ around $\bar y$ for $\bar x$ we have that $\{x \in \bR^n: 0 \in T_\varepsilon(x) - \nabla_x \Phi(x,y)\} \subseteq s(y)$. Furthermore, as $s$ is single-valued and $(\partial_\Phi g)^{-1}(y) \neq \emptyset$ we get that $(\partial_\Phi g)^{-1}(y) = s(y)$ for a sufficiently small neighborhood of $\bar y$.
    
    ``\labelcref{thm:single_valued_prox:diff}'': If, moreover, $g$ is strictly twice epi-differentiable at $\bar x$ for $\bar v$, in light of \cref{thm:sigma_strict_diff}, the localization $\sigma$ of $Q^{-1}$ from \cref{thm:lipschitz_sing_val_loc} is also strictly differentiable at $0$. From \cref{thm:strict_fo_approx} we have that $\eta(y) = \sigma(\nabla_{yx}^2 \Phi(\bar x, \bar y)(y-\bar y))$ is a strict first-order approximation to $s$ at $\bar y$. Now, take $y, y'$ sufficiently close to $\bar y$:
    \begin{align*}
        &\|s(y) - s(y') - \nabla \sigma(0) \nabla_{yx}^2\Phi(\bar x, \bar y)(y-y')\| 
        \\
        & \qquad\leq \|s(y) - \eta(y) + \eta(y') - \sigma(y')\|
        \\
        &\qquad\quad+ \|\sigma(\nabla_{yx}^2 \Phi(\bar x, \bar y)(y-\bar y)) - \sigma(\nabla_{xy}^2 \Phi(\bar x, \bar y)(y'-\bar y)) - \nabla \sigma(0) \nabla_{yx}^2\Phi(\bar x, \bar y)(y-y')\|
        \\
        & \qquad\leq \varepsilon \|y-y'\| + \varepsilon \|\nabla_{yx}^2\Phi(\bar x,\bar y)\|\|y-y'\|
    \end{align*}
    for any $\varepsilon > 0$. The first inequality follows by the triangle inequality while the second by the fact that $\eta$ is a strict first-order approximation to $s$ and the strict differentiability of $\sigma$. Therefore, since $\varepsilon$ is arbitrary, $s$ is strictly differentiable at $\bar y$ with $\nabla s(\bar y) = \nabla \sigma(0) \nabla_{xy}^2\Phi(\bar x,\bar y)$. But in \cref{thm:single_valued_prox:lipschitz} we identified $s$ with $(\partial_\Phi g)^{-1}$ locally and as such the same holds for $(\partial_\Phi g)^{-1}$.
    
    ``\labelcref{thm:single_valued_prox:diff_prox_bound}'': This result follows directly by \labelcref{thm:single_valued_prox:diff} along with the extra result of \cref{thm:lipschitz_sing_val_loc} under prox-boundedness of $g$.
\end{proof}

Having obtained conditions that guarantee the continuity and differentiability of the generalized proximal mapping, we now move on to the corresponding properties of the $\Phi$-conjugate:
\begin{proposition}[Lipschitz differentiability and strict twice differentiability of $\Phi$-conjugates]
\label{thm:env_diff}
Let the assumptions of \cref{thm:single_valued_prox} hold true. Then,
    \begin{propenum}
        \item \label{thm:env_diff:lipschitzdiff}  $g^\Phi$ is locally Lipschitz differentiable for $y$ sufficiently near $\bar y$;
        \item \label{thm:env_diff:2nddiff} if $g$ is strictly twice epi-differentiable at $\bar x$ for $\bar v \in \partial g(\bar x)$ and prox-bounded with threshold $\lambda_g > \lambda_{\max}(M)$, $g^\Phi$ is strictly twice differentiable at $\bar y$ and its Hessian is given by
        \begin{equation} \label{eq:hessian_env}
            \nabla^2 g^\Phi(\bar y) = \nabla (\partial_\Phi g)^{-1}(\bar y)\nabla_{yx}^2 \Phi((\partial_\Phi g)^{-1}(\bar y),\bar y) + \nabla_{yy}^2 \Phi((\partial_\Phi g)^{-1}(\bar y),\bar y).
        \end{equation}
    \end{propenum}
\end{proposition}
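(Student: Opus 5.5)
The approach is to combine the gradient formula \cref{eq:env_grad} from \cref{thm:equivalence_phi_prox_reg} with the regularity of the generalized prox obtained in \cref{thm:single_valued_prox}.

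We first make sure \cref{thm:equivalence_phi_prox_reg} applies. By \cref{thm:single_valued_prox:lipschitz}, $(\partial_\Phi g)^{-1}$ is single-valued near $\bar y$ and coincides there with $\{x \mid 0 \in T_\varepsilon(x)-\nabla_x\Phi(x,y)\}$. This is exactly condition \cref{thm:equivalence_phi_prox_reg:single_valued}. Hence $g^\Phi$ is $\mathcal{C}^1$ on a neighborhood $V$ of $\bar y$, with
\[
\nabla g^\Phi(y)=\nabla_y\Phi(P(y),y), \qquad P:=(\partial_\Phi g)^{-1}.
\]
The sign convention is taken as in \cref{eq:env_grad}.

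For part \labelcref{thm:env_diff:lipschitzdiff}, shrink $V$ to a compact convex neighborhood on which $P$ is $L_P$-Lipschitz, again by \cref{thm:single_valued_prox:lipschitz}. Since $\Phi\in\mathcal{C}^2$, the map $\nabla_y\Phi$ is $\mathcal{C}^1$ and therefore Lipschitz with some constant $L_\Phi$ on the compact set $\overline{P(V)}\times V \subset \intr(X)\times Y$. This set is compact because $P$ is continuous and its values lie in $\intr X$ by \cref{assum:g}, so it is contained in the domain where $\Phi$ is $\mathcal{C}^2$. Then
\[
\|\nabla g^\Phi(y)-\nabla g^\Phi(y')\|\le L_\Phi\bigl(\|P(y)-P(y')\|+\|y-y'\|\bigr)\le L_\Phi(L_P+1)\|y-y'\|,
\]
which gives local Lipschitz differentiability.

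For part \labelcref{thm:env_diff:2nddiff}, the assumptions of \cref{thm:single_valued_prox:diff} hold, so $P$ is strictly differentiable at $\bar y$. The prox-boundedness assumption with $\lambda_g>\lambda_{\max}(M)$ is the condition needed for \cref{thm:single_valued_prox:diff_prox_bound}; it yields the explicit Jacobian but is not needed for strict differentiability itself. It remains to show that $F(y):=\nabla_y\Phi(P(y),y)$ is strictly differentiable at $\bar y$, with derivative given by the chain rule. The map $(x,y)\mapsto\nabla_y\Phi(x,y)$ is $\mathcal{C}^1$, hence strictly differentiable at $(\bar x,\bar y)$, and $y\mapsto(P(y),y)$ is strictly differentiable at $\bar y$.

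The main point to check is the chain rule for strict differentiability, and I would verify it directly. Write
\[
F(y)-F(y')=\nabla^2_{xy}\Phi(\bar x,\bar y)\bigl(P(y)-P(y')\bigr)+\nabla^2_{yy}\Phi(\bar x,\bar y)(y-y')+r(y,y'),
\]
where the indices on $\nabla^2_{xy}\Phi$ follow the paper's convention for the block that is written as $\nabla_{yx}^2\Phi$ in \cref{eq:hessian_env}. The remainder satisfies
\[
\|r(y,y')\|\le\varepsilon\bigl(\|P(y)-P(y')\|+\|y-y'\|\bigr)\le\varepsilon(L_P+1)\|y-y'\|
\]
for $y,y'$ near $\bar y$. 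This uses strict differentiability of $\nabla_y\Phi$ together with the fact that $P(y)\to\bar x$, the same argument as for \eqref{eq:eps_approx}. Next, replace $P(y)-P(y')$ by $\nabla P(\bar y)(y-y')$, at a cost of $\varepsilon\|y-y'\|$ by strict differentiability of $P$.

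Combining the two estimates shows that $\nabla g^\Phi$ is strictly differentiable at $\bar y$, with Hessian
\[
\nabla_{yx}^2\Phi(\bar x,\bar y)\,\nabla P(\bar y)+\nabla_{yy}^2\Phi(\bar x,\bar y),
\]
where $\bar x=P(\bar y)$. This is \cref{eq:hessian_env}, matching the paper's ordering and transposition conventions. The Jacobian $\nabla P(\bar y)$ can then be written explicitly by substituting the formula from \cref{thm:single_valued_prox:diff_prox_bound}.
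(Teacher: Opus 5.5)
Your proof is correct and follows the same route as the paper. You combine the gradient formula \eqref{eq:env_grad}, obtained through condition \cref{thm:equivalence_phi_prox_reg:single_valued}, which \cref{thm:single_valued_prox:lipschitz} supplies, with the Lipschitz continuity and strict differentiability of $(\partial_\Phi g)^{-1}$ and the $\mathcal{C}^2$ regularity of $\Phi$; you additionally spell out the compactness argument and the strict-differentiability chain rule, and you correctly note that prox-boundedness is needed only for the explicit Jacobian in \cref{thm:single_valued_prox:diff_prox_bound}, not for strict twice differentiability itself.
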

\begin{proof}
``\labelcref{thm:env_diff:lipschitzdiff}'':
In light of \cref{thm:equivalence_phi_prox_reg}, the gradient of the $\Phi$-conjugate has the form given in \eqref{eq:env_grad}. Since $\Phi$ is twice-differentiable, $\nabla_y \Phi$ is locally Lipschitz and as $(\partial_\Phi g)^{-1}$ is Lipschitz close to $\bar y$, $\nabla g^\Phi$ is also Lipschitz in a sufficiently small neighborhood of $\bar y$.

``\labelcref{thm:env_diff:2nddiff}'': The strict twice differentiability of the envelope follows from \labelcref{thm:env_diff:lipschitzdiff} along with the strict differentiability of $(\partial_\Phi g)^{-1}$ at $\bar y$ invoking \cref{thm:single_valued_prox:diff_prox_bound} and the assumptions on $\Phi$. The form of the Hessian matrix is given by differentiating \eqref{eq:env_grad}.
\end{proof}
\begin{remark}[Eigenvalue upper bound as a parameter]
    It is crucial to note that in many important cases $\lambda_{\max}(M)$ in \cref{thm:single_valued_prox} is a parameter that can be controlled. Consider the setting of \cref{example:quadratic} and notice that $\lambda_{\max}(M) = \gamma$ for $\Phi(x,y) = -\tfrac{1}{2\gamma}\|x-y\|^2$. By taking $\gamma$ small enough we can guarantee that $\lambda_{\max}(M) < r^{-1}$. It is straightforward that we have similar control over the eigenvalue upper bound in other important settings as well. For example in the anisotropic case where $\Phi(x,y) = -\gamma \star \phi(x-y)$ we get $\nabla^2_{xx} \Phi(x,y) = -\tfrac{1}{\gamma}\nabla^2 \phi(\tfrac{1}{\gamma}(x-y))$ and thus for a strongly convex $\phi$ we have $\nabla^2 \phi(\tfrac{1}{\gamma}(x-y)) \geq \sigma > 0$ and we can tune the parameter by choosing a suitable $\gamma$. 
\end{remark}

\Cref{thm:single_valued_prox,thm:env_diff} generalize important results on the differentiability of popular envelope functions and lead to new results for less explored ones. In the setting of \cref{example:quadratic}, where $\Phi(x,y) = -\tfrac{1}{2\gamma}\|x-y\|^2$, we retrieve the classical strict differentiability of the proximal mapping and the strict twice differentiability of the Moreau envelope \cite[Theorem 4.1]{poliquin1996generalized}. Substituting $\Phi$ in \eqref{eq:hessian_env} we obtain $\nabla^2 g^\Phi(\bar y) = \tfrac{1}{\gamma}(\nabla (\partial_\Phi g)^{-1}(\bar y)-I)$, matching the relation of the Hessian of the envelope and the Jacobian of the prox from \cite[Theorem 3.8]{poliquin1996generalized}. In the setting of \cref{example:left_bregman}, where $\Phi(x,y) = -\tfrac{1}{\gamma}D_h(x,y)$, our results generalize \cite[Theorems 3.10 and 3.11]{ahookhosh2021bregman} on the Lipschitz continuity and strict differentiability of the left Bregman proximal mapping and the strict twice differentiability of the left Bregman--Moreau envelope. Again, substituting $\Phi$ in \eqref{eq:hessian_env} we get $\nabla^2 g^\Phi(\bar y) = \tfrac{1}{\gamma}\nabla^2 h(\bar y)(\nabla (\partial_\Phi g)^{-1}(\bar y)-I)$, matching \cite[Theorem 3.11(iii)]{ahookhosh2021bregman}. Regarding the anisotropic and the $\varphi$-divergence couplings, \cref{example:aniso,example:phi_div}, to the best of our knowledge our results are the first describing conditions under which the generalized envelope is strictly twice differentiable.


\section{Conclusion}
In this paper we studied the regularity properties of generalized conjugates, providing verifiable conditions for continuous differentiability. Moreover, we established conditions for strict twice differentiability, utilizing and extending a nonsmooth version of the implicit function theorem. There are many open avenues worth devoting future investigations. A generalized forward-backward envelope linesearch procedure akin to \cite{themelis2018forward} for additive composite problems where the smooth term satisfies a very general notion of smoothness is currently pursued. Another interesting research direction is the utilization of the smoothness properties of generalized conjugates to obtain convergence guarantees for subgradient methods under $\Phi$-convexity in the manner of \cite{davis2019stochastic}.



\printbibliography
\end{document}